\journal{Journal of Computational Physics}
\let\vec\relax 
\let\vec\relax 
\pgfplotsset{compat=1.14}
\newcommandx{\unsure}[2][1=]{\todo[linecolor=blue,backgroundcolor=blue!25,bordercolor=blue,#1]{#2}}
\newcommandx{\remarkBox}[2][1=]{\todo[linecolor=red,backgroundcolor=red!25,bordercolor=red,#1]{#2}}
\newtheorem{thm}{Theorem}
\newtheorem{lem}{Lemma}
\newtheorem{rem}{Remark}
\newtheorem{ex}{Example}
\begin{document}


\definecolor{chcol}{rgb}{0.4,0.,0.9}
\newcommand{\change}[1]{\textcolor{chcol}{#1}}
\newcommand{\viscosity}{\mu}     
\renewcommand{\Pr}{\text{Pr}}     
\renewcommand{\Re}{\text{Re}}   
\newcommand{\Ma}{\text{Ma}}     
\newcommand{\halfComma}{\kern 0.083334em}
\newcommand{\pderivative}[2]{\frac{\partial #1}{\partial #2}}
\newcommand{\avg}[1]{\left\{\hspace*{-3pt}\left\{#1\right\}\hspace*{-3pt}\right\}}
\newcommand{\jump}[1]{\ensuremath{\left\llbracket #1 \right\rrbracket}}
\newcommand{\shat}{\ensuremath{\hat{s}}}        
\newcommand{\dS}{{\,\operatorname{dS}}}         
\newcommand{\PBT}{{\,\operatorname{PBT}}}    
\newcommand{\ec}{{\mathrm{EC}}}                     
\newcommand{\es}{{\mathrm{ES}}}                     
\newcommand{\ent}{{S}}                                     
\newcommand{\ma}{-}                                        
\renewcommand{\sl}{+}                                      
\newcommand{\dx}{\,\text{d}x}                           
\newcommand\iprod[1]{\left\langle #1\right\rangle}                                             
\newcommand\inorm[1]{\left |\left| #1\right|\right|}                                               
\newcommand\vnorm[1]{\left| #1\right|}                                              		       
\newcommand\iprodN[1]{\left\langle #1\right\rangle_{\!N}}                                 
\newcommand\inormN[1]{\left |\left| #1\right|\right|_{N}}                                     
\newcommand\irefInt{\int\limits_{-1}^{1}}                                                            
\newcommand\ivolN{\int\limits_{ N }\! }                                                              
\newcommand\isurfE{\int\limits_{\partial E }\! }                                                   
\newcommand\volEN{\mkern-11mu\int\limits_{E , N}\mkern-5mu }                    
\newcommand\isurfEN{\mkern-11mu\int\limits_{\partial E , N}\mkern-11mu }    
\renewcommand\vec[1]{\accentset{\,\rightarrow}{#1}}                              
\newcommand\spacevec[1]{\accentset{\,\rightarrow}{#1}}                        
\newcommand\contravec[1]{\tilde{ #1}}                                                     
\newcommand\contraspacevec[1]{\spacevec{\tilde{#1}}}                          
\newcommand\spacevecg[1]{\vv #1}                                                         
\newcommand\spacestatevec[1]{\vv{\spacevec{#1}}}                               
\newcommand\statevec[1]{\mathbf #1}                                                     
\newcommand\statevecGreek[1]{\boldsymbol #1}                                     
\newcommand\contrastatevec[1]{\tilde{\mathbf #1}}                                 
\newcommand\acclrvec[1]{\accentset{\,\leftrightarrow}{#1}}                      
\newcommand\bigstatevec[1]{\acclrvec{{\mathbf #1}}}                              
\newcommand\biggreekstatevec[1]{\acclrvec{\boldsymbol #1}}                
\newcommand\bigcontravec[1]{\acclrvec{\tilde{\mathbf{#1}}}}                   
\newcommand\biggreekcontravec[1]{\acclrvec{\tilde{\boldsymbol #1}}}    
\newcommand\vecNabla{\accentset{\,\rightarrow}\nabla}                         
\newcommand\vecNablaXi{\accentset{\,\rightarrow}\nabla_{\!\xi}}            
\newcommand\vecNablaX{\accentset{\,\rightarrow}\nabla_{\!x}}              
\newcommand\mmatrix[1]{\underbar{#1}}				
\newcommand\fiveMatrix[1]{\mathsf{ #1}}                          
\newcommand\fifteenMatrix[1]{\underline{\mathsf{ #1}}}    
\newcommand\matrixvec[1]{\mathcal #1}                           
\newcommand\bigmatrix[1]{\mathfrak #1}                          
\newcommand{\dmat}{\matrixvec{D}}     
\newcommand{\qmat}{\matrixvec{Q}}    
\newcommand{\mmat}{\matrixvec{M}}   
\newcommand{\bmat}{\matrixvec{B}}    
\newcommand\interiorfaces{\genfrac{}{}{0pt}{}{\mathrm{interior}}{\mathrm{faces}}}
\newcommand\boundaryfaces{\genfrac{}{}{0pt}{}{\mathrm{boundary}}{\mathrm{faces}}}
\newcommand\allfaces{\genfrac{}{}{0pt}{}{\mathrm{all}}{\mathrm{faces}}}
\newcommand{\testfuncOne}{\statevecGreek{\varphi}}  
\newcommand{\testfuncTwo}{\biggreekstatevec{\vartheta}}
\newcommand{\testfuncPhi}{\boldsymbol\phi}
%
\newcommand{\DD}{\spacevec{\mathbb{D}}\cdot}
\newcommand{\DDs}{\spacevec{\mathbb{D}}^{s}\cdot}
\newcommand{\IN}[1]{\mathbb I^{N}\!\!\left(#1\right)} 
\newcommand{\PN}[1]{\mathbb P^{#1}}
\newcommand{\LTwo}[1]{\mathbb L^{2}\!\left(#1\right)}
%
%
\newcommand\overRe{\frac{1}{{\operatorname{Re} }}}
\newcommand\twooverRe{\frac{2}{{\operatorname{Re} }}}
%
%
\newcommand\oneHalf{\frac{1}{2}}
\newcommand\oneFourth{\frac{1}{4}}
\makeatletter
\providecommand{\doi}[1]{%
  \begingroup
    \let\bibinfo\@secondoftwo
    \urlstyle{rm}%
    \href{http://dx.doi.org/#1}{%
      doi:\discretionary{}{}{}%
      \nolinkurl{#1}%
    }%
  \endgroup
}
\makeatother

\begin{frontmatter}

\title{Energy Bounds for Discontinuous Galerkin Spectral Element Approximations of Well-Posed Overset Grid Problems for Hyperbolic Systems}

\author[1,2]{David A. Kopriva}
\ead{dkopriva@fsu.edu}
\address[1]{Department of Mathematics, The Florida State University, Tallahassee, FL 32306, USA}
\address[2]{Computational Science Research Center, San Diego State University, San Diego, CA, USA}

\author[3]{Andrew R. Winters\corref{cor2}}
\ead{andrew.winters@liu.se}
\address[3]{Department of Mathematics, Applied Mathematics, Link\"{o}ping University, 581 83 Link\"{o}ping, Sweden}

\author[3,4]{Jan Nordstr\"om}
\ead{jan.nordstrom@liu.se}
\address[4]{Department of Mathematics and Applied Mathematics
 University of Johannesburg
 P.O. Box 524, Auckland Park 2006, South Africa.}
 \cortext[cor2]{Corresponding author}
 
\begin{abstract}
We show that even though the Discontinuous Galerkin Spectral Element Method is stable for hyperbolic boundary-value problems, and the overset domain problem is well-posed in an appropriate norm, the energy of the approximation of the latter is bounded by data only for fixed polynomial order, mesh, and time. In the absence of dissipation, coupling of the overlapping domains is destabilizing by allowing positive eigenvalues in the system to be integrated in time. This coupling can be stabilized in one space dimension by using the upwind numerical flux. To help provide additional dissipation, we introduce a novel penalty method that applies dissipation at arbitrary points within the overlap region and depends only on the difference between the solutions. We present numerical experiments in one space dimension to illustrate the implementation of the well-posed penalty formulation, and show spectral convergence of the approximations when sufficient dissipation is applied.
\end{abstract}

\begin{keyword}
Overset Grids, Chimera Method, Well-Posedness, Stability, Penalty Methods
\end{keyword}

\end{frontmatter}

\section{Introduction}

Overset grid methods \cite{Chan99,Meakin:1999io} accommodate complex geometries by overlapping simpler geometry fitted grids. Stability of the procedures used to couple the grids has long been a practical and theoretical issue \cite{SHARAN2018199}, especially at high order, and to date full stability proofs are not available. One of the difficulties in proving stability is that the underlying partial differential equation (PDE) described on the overset domains is not necessarily well-posed -- a necessary condition for convergence of a numerical scheme -- even when dissipative (characteristic) boundary conditions are applied at the grid boundaries. Well-posed formulations for linear systems in one or two space dimensions that use penalties along the overlap boundaries were presented in \cite{KOPRIVA2022110732}. In this paper, we simplify the analysis by considering only the one dimensional problem, while noting the differences with higher dimensions.

To set the stage in its simplest form, the problem to be solved in one space dimension for the scalar initial-boundary value problem (IBVP) is
\begin{equation}
OP\;\left\{\begin{gathered}
\omega_t + \alpha \omega_x = 0, \; \alpha = {\rm const}>0,\quad x\in\Omega = [a,d]\hfill\\
\omega(a,t) = 0\hfill\\
\omega(x,0) = \omega_0(x), \hfill
\end{gathered} \right.
\label{eq:OPScalar}
\end{equation}
which we denote as the ``Original Problem" (OP). The overset domain problem
splits the domain $\Omega$ into two overlapping subdomains  $\Omega_u = [a,c]$, $\Omega_v = [b,d]$, and defines the Base (B) and Overset (O) problems
\begin{equation}
B\;\left\{\begin{gathered}
u_t + \alpha u_x = 0, \quad x\in\Omega_u\hfill\\
u(a,t) = 0\hfill\\
u(x,0) = \omega_0(x) \hfill
\end{gathered} \right. \quad
O\;\left\{\begin{gathered}
v_t + \alpha v_x = 0, \quad x\in\Omega_v\hfill\\
v(b,t) = u(b,t)\hfill\\
v(x,0) = \omega_0(x) \hfill
\end{gathered} \right. .
\label{eq:L-RSystem}
\end{equation}
The subdomains,  overlap so that $\Omega = \Omega_u \cup \Omega_v$, see Fig.~\ref{fig:Overlap1D}. We will call \eqref{eq:L-RSystem} and its equivalent hyperbolic system version the {\it characteristic form of the overset domain problem} because it directly specifies characteristic boundary conditions on the problem $O$.  We choose the boundary condition on the left at $x=a$ to be zero so that the analytic solution is bounded solely by the initial condition; no energy is added over time through the left boundary.

\begin{figure}[tbp] 
   \centering
   \includegraphics[width=4in]{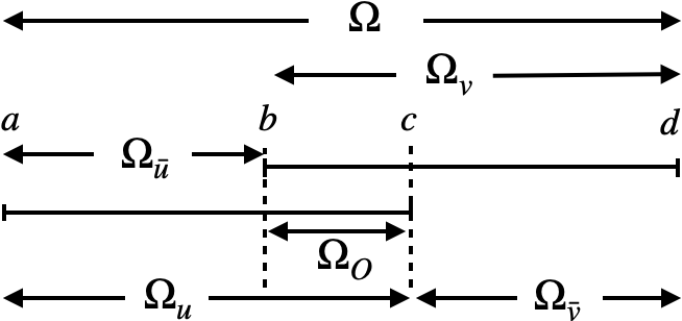}
   \caption{Overset domain definitions in 1D}
   \label{fig:Overlap1D}
\end{figure}

It was shown in \cite{KOPRIVA2022110732} that the characteristic formulation for the scalar problem, \eqref{eq:L-RSystem}, is well-posed.
By standard energy arguments \cite{Nordstrom:2017yu}, the $\mathbb L^2$ norm of $u$ is bounded over the entire domain,
\begin{equation}
    \inorm{u(T)}^2_{\Omega_u}+\alpha\int_0^T u^2(c,t)\,\mathrm{d}t \le \inorm{\omega_0}^2_{\Omega_u},
\end{equation}
where $\iprod{u,w}_{\Omega_u} = \int_{\Omega_u} uw\,\mathrm{d}{\Omega_u}$ and $\inorm{u}^2_{\Omega_u }= \iprod{u,u}_{\Omega_u}$.
The overset problem, $O$, has a boundary condition that depends on $u$, for
\begin{equation}
\frac{d}{dt}\inorm{v}_{\Omega_v}^2 = -\alpha v^2(d,t) + \alpha u^2(b,t),
\label{eq:VEnergyWithUBC}
\end{equation}
so the energy for $v$ can be bounded by the initial data only if $u(b,t)$ can be bounded by the initial data (since the boundary data for $u=0$). In the continuous case, we can bound the value of $u^2(b,t)$ with integration-by-parts of the PDE over the complementary domain, $\Omega_{\bar u} = [a,b]$,
\begin{equation}
\alpha u^2(b,t) = -\frac{d}{dt}\inorm{u}_{\Omega_{\bar u}}^2.
\label{eq:ubarvalue}
\end{equation}
This was known as ``Technique 1" in \cite{KOPRIVA2022110732}. In fact, the following was proved for the scalar problem \cite{KOPRIVA2022110732}:
\begin{equation}
\begin{split}
& \inorm{u}_{\Omega_u}^2 \le \inorm{\omega_0}_{\Omega_u}^2
\\&
\inorm{v}_{\Omega_v}^2 \le \inorm{\omega_0}_{\Omega_v}^2 +  \inorm{\omega_0}_{\Omega_{\bar u}}^2 = \inorm{\omega_0}^2_{\Omega}.
\end{split}
\label{eq:Sharp1DScalarEstimate}
\end{equation}
Thus, the total energy in $u$ is bounded by its initial energy, while the energy in $v$ is bounded by the initial total energy in the full domain.
\textcolor{black}{That, uniqueness, and the fact that the solutions of \eqref{eq:L-RSystem} are the same in norm to the solution of OP, which is known to be well-posed, imply existence, from which well-posedness of \eqref{eq:L-RSystem} follows. See \cite{KOPRIVA2022110732}. }
\begin{rem}
In the overset domain problem, boundary data must come from a donor subdomain, e.g. $u^2(b,t)$ in \eqref{eq:VEnergyWithUBC}. Those boundary values do not have specified data. Instead, Technique 1 reverses the usual energy method process, which converts volume quantities to boundary values at which data is applied, and instead converts boundary values to volume quantities, which can then be connected to the initial conditions, e.g. \eqref{eq:ubarvalue}. Technique 1 represents the fact that for the PDE on any subdomain of the original, the energy is influenced only by initial data or data at the subdomain boundaries.
\end{rem}
\begin{rem}
The scalar problem is a special case in that there is no feedback from the overset subdomain, $\Omega_v$, to the base subdomain, $\Omega_u$. The base subdomain acts as an independent problem, decoupled from the overset subdomain. The scalar problem does not describe the full set of behaviors possible in systems of equations, such as reflections at the boundaries of the overlap domain, $\Omega_O$.
\end{rem}

A result similar to \eqref{eq:Sharp1DScalarEstimate} is found for symmetric hyperbolic systems
\begin{equation}
\statevec u_t + \mmatrix A\statevec u_x = 0
\end{equation}
with homogeneous characteristic boundary conditions when $\mmatrix A = \mmatrix A^T = \rm const$,
if one diagonalizes the system first before computing the energy estimate. Diagonalizing allows one to split the characteristics so that the complementary domain energies for ``incoming" waves are decoupled. In this way, coupling within the overlap $\Omega_O = [b,c]$ only occurs along characteristics. 

Unfortunately, decoupling through diagonalization is usually available only in one space dimension and for constant coefficients. Even for constant coefficient systems, the coefficient matrices are not typically simultaneously diagonalizable in more than one space dimension. 

If diagonalization is not done (or is not possible), then one gets energy bounds \cite{KOPRIVA2022110732} from
\begin{equation}\begin{split}
&\frac{d}{dt}\inorm{\statevec u}_{\Omega_u}^2
=  -\left\{\statevec u^T(a)\left|\mmatrix A^-\right|\statevec u(a)
+ \statevec u^T(c)\mmatrix A^+\statevec u(c) \right\}
+\statevec v^T(c)\left|\mmatrix A^-\right|\statevec v(c)
\\&
\frac{d}{dt}\inorm{\statevec v}_{\Omega_v}^2
=
-\left\{\statevec v^T(b)\left|\mmatrix A^-\right|\statevec v(b)
+ \statevec v^T(d)\mmatrix A^+\statevec v(d) \right\}
+\statevec u^T(b)\mmatrix A^+\statevec u(b) ,
\end{split}
\label{eq:EnergyInEachDomain}
\end{equation}
where $\mmatrix A^\pm = \oneHalf(\mmatrix A \pm |\mmatrix A|)$ are matrices whose eigenvalues have a single sign.
Note that the terms in braces on the right of both equations in \eqref{eq:EnergyInEachDomain} are non-negative, and correspond to energy loss due to energy leaving each subdomain. The remaining term in each is due to energy coming in from the donor subdomain.
Adding the two equations in \eqref{eq:EnergyInEachDomain},
\begin{equation}
\frac{d}{dt}\inorm{\statevec u}_{\Omega_u}^2  + \frac{d}{dt}\inorm{\statevec v}_{\Omega_v}^2\le \statevec v^T(c)\left|\mmatrix A^-\right|\statevec v(c) + \statevec u^T(b)\mmatrix A^+\statevec u(b).
\label{eq:EqnWithGrowthTerms}
\end{equation}
The quantities on the right are non-negative and so imply energy growth in each subdomain. Worse, they are not bounded by data unless additional information about the behavior of the solution in the overlap region, or equivalently the complementary domains, $\Omega_{\bar u}$ and $\Omega_{\bar v}$, can be provided. We see, then, that the overset domain problem is not well-posed in $\mathbb L^2(a,d)$ using the energy norm represented by the sum on the left. That norm is also not consistent with the energy norm of the OP because it double counts the energy in the overlap region.

In \cite{KOPRIVA2022110732}, a novel well-posed penalty formulation of the overset domain problem for systems was proposed. For {$\mathbb L^2$ test functions $\statevecGreek\phi$ and $\statevecGreek\varphi$, the weak form of the penalty formulation is
\begin{equation}
\begin{gathered}
 \iprod{\statevecGreek\phi,\statevec u_t}_{\Omega_u} + \iprod{\statevecGreek\phi,\mmatrix A u_x }_{\Omega_u}+
 \left.\statevecGreek\phi^T\ \mmatrix{$\Sigma$}_u^b (\statevec u-\statevec v)\right|_{b^+}+
 \left.\statevecGreek\phi^T\ \mmatrix{$\Sigma$}_u^c (\statevec u-\statevec v)\right|_{c^-}= 0,\hfill\\
\iprod{\statevecGreek\varphi,\statevec v_t}_{\Omega_v} + \iprod{\statevecGreek\varphi,\mmatrix A \statevec v_x}_{\Omega_v} +  \left.\statevecGreek\varphi^T \mmatrix{$\Sigma$}_v^b (\statevec v-\statevec u)\right|_{b^+}+   \left.\statevecGreek\varphi^T \mmatrix{$\Sigma$}_v^c (\statevec v-\statevec u)\right|_{c^-}= 0,\hfill\\
\end{gathered}
\label{eq:WeakFormsonFullDomains}
\end{equation}
plus dissipative physical boundary conditions. The superscripts $\pm$ refer to the limit from the right and the left. The penalty formulation is well posed in $\mathbb L^2(a,d)$ using the {\it Overset Domain Norm}
\begin{equation}
\begin{split}
E(\statevec u,\statevec v)
&\equiv \inorm{\statevec u}^2_{\Omega_u} + \inorm{\statevec v}^2_{\Omega_v}
 -  \left\{ \eta \inorm{\statevec u}^2_{\Omega_O} +  (1-\eta)\inorm{\statevec v}^2_{\Omega_O}\right\}
\\&
=\inorm{\statevec u}^2_{\Omega_{\bar u}} + (1-\eta)\inorm{\statevec u}^2_{\Omega_O} + \inorm{\statevec v}^2_{\Omega_{\bar v}} + \eta \inorm{\statevec v}^2_{\Omega_O}
\end{split}
\label{eq:EnergyNorm}
\end{equation}
for any $0<\eta<1$, when the symmetric, positive penalty matrices $\mmatrix{$\Sigma$}$ satisfy
\begin{subequations}
\begin{align}
\beta A + \mmatrix{$\Sigma$}_u =  \mmatrix{$\Sigma$}_v ,
\label{eq:1DSigmaMatrixConditionsW2i}
\\
2 \mmatrix{$\Sigma$}_v - \beta A \ge 0,
\label{eq:1DSigmaMatrixConditionsW3i}
\end{align}
\label{eq:1DSigmaMatrixConditionsWi}
\end{subequations}
where $\beta = \eta$ at $x=b$ and $\beta = (1-\eta)$ at $x=c$. This means that the solution of \eqref{eq:WeakFormsonFullDomains} is unique, and for initial conditions matching that of the OP, the solutions match that of the OP in the norm \eqref{eq:EnergyNorm}.
The proof depends on the fact that integration-by-parts applies on any subinterval of the domain, i.e., ``Technique 1". The penalty terms in \eqref{eq:EnergyNorm} serve to remove the growth terms in the bound \eqref{eq:EqnWithGrowthTerms}.

Unfortunately, Technique 1 does not hold discretely for all but the most trivial approximations and situations. High order methods, especially, can have a larger domain of dependence than the PDE over a time step. Instead, at best, an approximation has a summation-by-parts (SBP) property only on a whole subdomain (or element). This means that one cannot follow the steps that show the continuous problem is well-posed with the equivalent discrete argument to show stability, as is typically done with schemes that satisfy a SBP property \cite{Nordstrom:2017yu}.

So the question is: ``What can one say about the stability of a numerical scheme that does not have full access to the integration-by-parts properties of the original PDE when applied to an overset grid problem?" We study this question in the context of discontinuous Galerkin spectral element (DGSEM) approximations of the overset grid problem for scalar and systems of linear equations in one space dimension by deriving energy bounds and the eigenvalue structure for each.

We target the DGSEM because:
\begin{enumerate}
\item The DGSEM is of arbitrary order depending on the order of the approximating polynomials and quadrature, and so the results count as high order approximations.
\item The DGSEM satisfies the SBP property and has been shown to be stable for the original IBVP \eqref{eq:OPScalar} and its system equivalent \cite{Gassner:2013ol,Gassner:2013ij}. There is also a relation to high order SBP finite difference methods \cite{Gassner:2013ol}.
\item The polynomial approximations used in the DGSEM define interpolations intrinsically, which bypasses the need for ad hoc interpolation schemes to find solution values between grid points.
\item The DGSEM approximates a weak form \eqref{eq:WeakFormsonFullDomains} of the equations, so there is no need for specially designed lifting operators to compute penalties at non-mesh points that appear in the strong formulation.
\end{enumerate}

In this paper, we show that, even though the DGSEM is stable on the OP, and the overset domain problem is well-posed in an appropriate norm, the energy of the approximation is bounded by data only for fixed polynomial order, mesh, and time for the overset problem using either characteristic boundary conditions or the well-posed formulation \eqref{eq:WeakFormsonFullDomains} derived in \cite{KOPRIVA2022110732}. To provide additional dissipation to counteract growth, we implement a novel penalty method introduced in \cite{KOPRIVA2022110732} that applies dissipation at arbitrary points within the overlap region and depends on the difference between the solutions. Finally, we present numerical experiments in one space dimension to illustrate how the DGSEM approximation performs with the penalty formulation of \cite{KOPRIVA2022110732}.

\section{DGSEM Approximations to the Overset Grid Problem}

We present the DGSEM approximation of two formulations of the overset grid problem. The first is the characteristic formulation, for both scalar and systems of equations, where upwind data needed at the subdomain boundaries is taken from the donor subdomain. As shown in \cite{KOPRIVA2022110732}, this approach is well-posed for the scalar problem, but not, in general, for the system. The second formulation is the penalty formulation \eqref{eq:WeakFormsonFullDomains} of \cite{KOPRIVA2022110732}, which is well-posed in a suitable norm.

\subsection{The DGSEM Approximation of the Scalar Characteristic Problem}
We begin with the approximation of the scalar problem, \eqref{eq:L-RSystem}, which is a common, simple, starting point. As noted in the introduction, the scalar problem is well-posed, but it does not need coupling of the overset domain back to the base domain. As a result, we will see that the energy bounds obtained are better than in the general system case.

The description of the DGSEM is now standard \cite{Kopriva:2009nx,Winters2021} and won't be repeated in detail. The DGSEM approximates the weak forms of the equations \eqref{eq:L-RSystem}. Let $\phi$, $\varphi$ be test functions. Then the weak forms of the PDEs become
\begin{equation}
\begin{gathered}
\iprod{u_t,\phi}_{\Omega_u} +\alpha\iprod{u_x,\phi}_{\Omega_u} = 0 \\
\iprod{v_t,\varphi}_{\Omega_v} +\alpha\iprod{v_x,\varphi} _{\Omega_v}= 0.
\end{gathered}
\end{equation}

For the numerical approximation, the domains are subdivided into elements, and each element is mapped onto the standard interval $[-1,1]$. Solutions are approximated by polynomials of degree $N$ and inner-products/integrals are approximated by Legendre-Gauss-Lobatto (LGL) quadratures.  Elements are coupled by way of a two-point numerical flux $F^*(\cdot,\cdot)$.

Let the set $\underline{e}^k = [x_{k-1},x_k],\; k = 1,\ldots,K_u$, where $x_k = a + k\Delta x_k, \; k=0,\ldots,K_u$ be the grid of elements covering the base subdomain, $\Omega_u$. For convenience, we assume that the elements have the same size, and that the elements are consecutively ordered, though neither is necessary in practice.

We map each element to the reference element $E=[-1,1]$ with the affine map
\begin{equation}
x = x_{k-1} + \frac{\xi+1}{2}\left( x_k - x_{k-1}\right) = x_{k-1} + \frac{\xi+1}{2}\Delta x_k,
\end{equation}
so that on the reference element,
\begin{equation}
\frac{\Delta x_k}{2} \iprod{u_t,\phi}_{E}  + \alpha\iprod{u_\xi,\phi}_{E}= 0.
\end{equation}
From here, we drop the subscripts, $E$ and $k$. Integrating by parts,
\begin{equation}
\frac{\Delta x}{2} \iprod{u_t,\phi}  +\phi f |_{-1}^1- \alpha\iprod{u,\phi_\xi}= 0,
\end{equation}
where $f = \alpha u$.

Finally, we replace functions by their polynomial approximations, integrals with quadrature, and the surface (boundary) flux by the numerical flux, $ F^*$.
To those ends, let us define $U$ to be a polynomial of degree $\le N$ defined on the interval $\xi\in [-1,1]$. In nodal representation,
\begin{equation}
U(\xi,t) = \sum_{j=0}^N U_j(t) \ell_j(\xi),
\end{equation}
where $\ell_j$ is the $j^{\mathrm{th}}$ Lagrange interpolating polynomial of degree $N$, and $U_j$ is the associated nodal value at the $j^{\mathrm{th}}$ LGL quadrature point.
Next, let the discrete inner product be
\begin{equation}
\iprod{U,W}_N =\sum_{j=0}^N U_j W_j w_j,
\label{eq:DiscreteNorm}
\end{equation}
where $w_j$ is the LGL quadrature weight associated with the nodal point $x_j$. The discrete inner product induces the discrete $\mathbb L^2$ norm, $\inormN{U}^2 = \iprodN{U,U}$. Finally, for given left and right states $U^L$ and $U^R$, the upwind flux is $F^*(U^L,U^R) = \alpha U^L$ for $\alpha>0$.

Substituting the approximations gives the DGSEM for the scalar problem on an element of the base domain
\begin{equation}
\frac{\Delta x}{2} \iprod{U_t,\phi}_N  +\phi F^* |_{-1}^1- \alpha\iprod{U,\phi_\xi}_N= 0,
\label{eq:UDGSEMWeakForm}
\end{equation}
with the external state at the left physical boundary taken to be zero. In general we do not need to specify the specific element $k$ on which the solution is considered, e.g. $U^k$, and will do so only if necessary.
For $v$ on the overset domain, we also have the same weak form,
\begin{equation}
\frac{\Delta x}{2} \iprod{V_t,\varphi}_N  +\varphi F^* |_{-1}^1- \alpha\iprod{V,\varphi_\xi}_N= 0,
\label{eq:VDGSEMWeakForm}
\end{equation}
but with the external state at $x=b$ taken from the base solution polynomial, $U(b,t)$.

\subsection{Approximation of Systems with Characteristic Interface Conditions}

The systems of equations are of the form
\begin{equation}
\statevecGreek \omega_t + \mmatrix{A}\statevecGreek \omega_x = 0,
\label{eq:PDESystem}
\end{equation}
where $\statevecGreek \omega \in \mathbb{R}^m$. For simplicity, we assume that the $m\times m$ matrix $\mmatrix A$ is constant coefficient, and already symmetric, i.e. $\mmatrix{A} = \mmatrix{A}^T$. For physical boundary conditions we choose characteristic boundary conditions with external states $\statevec g_L = \statevec g_R = \statevec 0$, which corresponds to no gain of energy at the physical boundaries.

The natural instinct is to approximate the overset domain problem with characteristic boundary conditions at the non-physical subdomain boundaries. The characteristic formulation is well-posed in one space dimension, but is not in higher dimensions \cite{KOPRIVA2022110732}. Translated to the DGSEM, one starts with the usual approximation, but where the external state for the numerical flux at the subdomain boundaries is taken from the donor mesh/subdomain.

For the system, the DGSEM approximation on the reference element for the base domain is
\begin{equation}
\frac{\Delta x}{2} \iprod{\statevec U_t,\statevecGreek\phi}_N  +\statevecGreek\phi^T\left\{ \statevec F^*- \statevec F \right\} |_{-1}^1+
\iprod{\mmatrix A \statevec U_\xi,\statevecGreek\phi}_N= 0,
\label{eqUDGSEMWeakForm}
\end{equation}
where we define $\statevec F =\mmatrix A\statevec U$ and the inner product as $\iprodN{\statevec F, \statevec G} = \sum_{j=1}^N \statevec F^T_j \statevec G_j w_j$. By SBP, this form for the system is algebraically equivalent to the form used in \eqref{eq:UDGSEMWeakForm} \cite{gassner2010}.

To impose characteristic boundary conditions we use the upwind numerical flux. For any two states $\statevec U^L$ and $\statevec U^R$, the upwind flux chooses the state asccording to the signs of the eigenvalues of $\mmatrix A$ as
\begin{equation}
\statevec F^*(\statevec U^L, \statevec U^R) = \mmatrix A^+\statevec U^L  + \mmatrix A^-\statevec U^R = \oneHalf (\statevec F^L + \statevec F^R) - \oneHalf |\mmatrix A|(\statevec U^R - \statevec U^L) \equiv \avg{\statevec F} - \oneHalf |\mmatrix A|\jump{\statevec U}.
\label{eq:UpwindNumericalFlux}
\end{equation}
The external states used are $\statevec g_L  = \statevec 0$ at $ x=a$ and $\statevec g_R=\statevec 0$ at $x=d$, and
\begin{equation}
\statevec U^R(1,t) = \statevec V(c,t),\quad \statevec V^L(-1,t) = \statevec U(b,t)
\end{equation}
for the elements with the overlap domain boundaries.

\subsection{Approximation of Systems with a Pure Penalty Formulation}

To create the DGSEM approximation for the system, \eqref{eq:WeakFormsonFullDomains}, we again subdivide each domain $\Omega_u,\Omega_v$ into elements $\underline{e}_u^k,\bar{e}_v^k$ (Fig.~\ref{fig:1DElements}).
\begin{figure}[htbp] 
   \centering
   \includegraphics[width=4in]{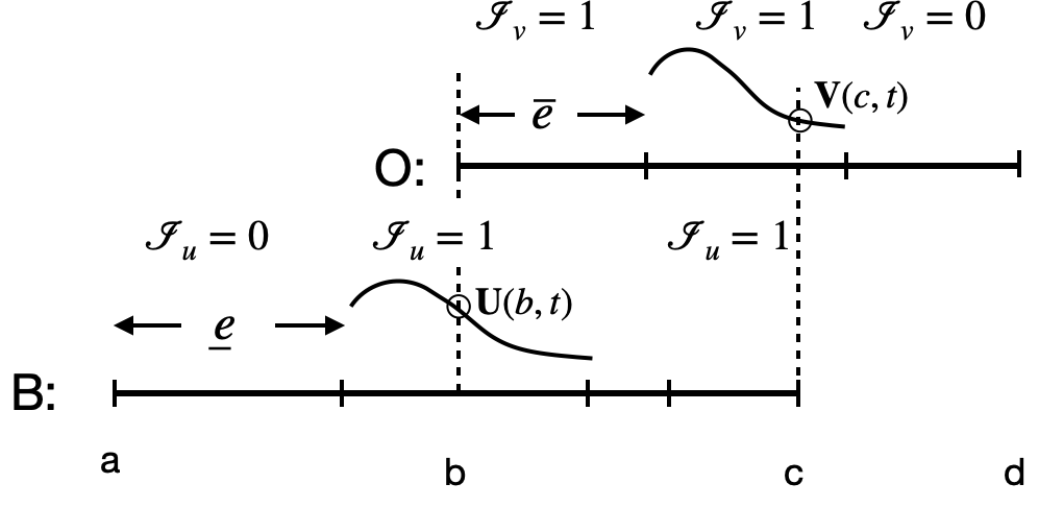}
   \caption{Base (B) overlap (O) subdomains divided into elements.}
   \label{fig:1DElements}
\end{figure}
Associated with each element we define an indicator function that is active only if an overlap boundary falls within it, i.e.
\begin{equation}
\mathcal I^k_u(x) = \left\{\begin{gathered} 1\quad \text{if } x\in \underline{e}^k_u \hfill\\
0\quad \text{otherwise} \hfill\end{gathered} \right.
\end{equation}
and similarly for $\mathcal I^k_v$.
 On the reference element, the elemental contributions become
\begin{equation}
\begin{gathered}
 \iprod{\mathcal J\statevecGreek\phi,\statevec u_t}_E + \iprod{\statevecGreek\phi,\mmatrix A \statevec u_\xi }_{E}+
 \mathcal I^k_u(b) \left.\statevecGreek\phi^T \mmatrix{$\Sigma$}_u^b (\statevec u-\statevec v)\right|_{b^+}+
 \mathcal I^k_u(c) \left.\statevecGreek\phi^T \mmatrix{$\Sigma$}_u^c (\statevec u-\statevec v)\right|_{c^-}= 0,\hfill\\
\iprod{\mathcal J\statevecGreek\varphi,\statevec v_t}_{E} + \iprod{\statevecGreek\varphi,\mmatrix A \statevec v_\xi}_{E}
+   \mathcal I^k_v(b)\left.\statevecGreek\varphi^T \mmatrix{$\Sigma$}_v^b (\statevec v-\statevec u)\right|_{b^+}
+   \mathcal I^k_v(c) \left.\statevecGreek\varphi^T \mmatrix{$\Sigma$}_v^c (\statevec v-\statevec u)\right|_{c^-}= 0\hfill\\
\end{gathered}
\label{eq:WeakFormsFonFullDomainsXform}
\end{equation}
where $\mathcal J = dx/d\xi = \Delta x/2$.

\begin{rem}For clarity in keeping track of locations, we leave the physical space values $b$ and $c$ rather than write them explicitly in the reference space. In practice the reference space locations are located and stored.\end{rem}

In the final approximation within an element, the element boundary fluxes are replaced with a numerical flux except at the overlap interface points, where the boundary coupling is handled through the penalty terms. The result is the DGSEM/overset grid approximation of the penalty formulation
\begin{equation}
\begin{split}
 \iprod{\mathcal J\statevecGreek\phi,\statevec U_t}_N
 &+ \left.\statevecGreek\phi^T\left\{\hat{\statevec F}-\statevec F\right\}\right|^1_{-1} +\iprod{\statevecGreek\phi,\mmatrix A \statevec U_\xi }_{N }
 \\&+
 \mathcal I^k_u(b)  \left.\statevecGreek\phi^T \mmatrix{$\Sigma$}_u^b (\statevec U-\statevec V)\right|_b+
 \mathcal I^k_u(c)  \left.\statevecGreek\phi^T \mmatrix{$\Sigma$}_u^c (\statevec U-\statevec V)\right|_c= 0
 \end{split}
 \label{eq:DGSEMChimeraU}
\end{equation}
and
\begin{equation}
\begin{split}
\iprod{\mathcal J\statevecGreek\varphi,\statevec V_t}_{N }
&+\left.\statevecGreek\varphi^T\left\{\hat{\statevec F}-\statevec F\right\}\right|^1_{-1}
+ \iprod{\statevecGreek\varphi,\mmatrix A \statevec V_\xi}_{N }
\\&+  \mathcal I^k_v(b)  \left.\statevecGreek\varphi^T \mmatrix{$\Sigma$}_v^b (\statevec V-\statevec U)\right|_b
+   \mathcal I^k_v(c)  \left.\statevecGreek\varphi^T \mmatrix{$\Sigma$}_v^c (\statevec V-\statevec U)\right|_c= 0,
 \end{split}
\label {eq:DGSEMChimeraV}
\end{equation}
where $\statevec F(\statevec q) = \mmatrix A\statevec q$ for some $\statevec q$. For the numerical flux,

\begin{equation}
\hat{\statevec F}(\statevec q;x) = (1- \mathcal I^k_q(x))\statevec F^* +  \mathcal I^k_q(x)\statevec F(\statevec q),
\label{eq:ChimeraNumHatFlux}
\end{equation}
where $\statevec F^*\left(\statevec Q^L,\statevec Q^R\right)$ is the usual numerical flux given two element interface values from the left and right, e.g. \eqref{eq:UpwindNumericalFlux}. The flux \eqref{eq:ChimeraNumHatFlux} is just a formal way of saying that the usual numerical flux is used at all element boundaries except at the overlap interface boundaries, where the penalty flux is used instead.

%

\section{Energy Bounds}
Here we derive energy bounds for the characteristic and penalty approximations. Before doing so, we need a few theoretical results about the polynomial approximations.
\subsection{Polynomial Approximation Summary}

To study the stability of the DGSEM approximation to the overset grid problem, we will use a number of results from polynomial approximation theory and spectral element methods. The results we summarize here are found in \cite{CHQZ:2006}.

First, the discrete norm induced by the inner product \eqref{eq:DiscreteNorm} is equivalent to the continuous norm. There exist constants $C_1$ and $C_2$ such that for any polynomial $U$ of degree $\le N$,
\begin{equation}
C_1 \inorm{U}_{\LTwo{-1,1}}\le \inormN{U} \le C_2 \inorm{U}_{\LTwo{-1,1}}
\label{eq:NormEquivalence}
\end{equation}
where $\inorm{U}^2_{\LTwo{-1,1}} = \int_{-1}^{1} U^2\,\mathrm{d}x$ is the usual $\mathbb L^2$ norm.
For the Legendre polynomial approximations, $C_1 = 1$ and $C_2 = \sqrt{3}$ (\cite{CHQZ:2006},  Sec.~5.3).

The continuous norms satisfy the inverse inequality (\cite{CHQZ:2006},  Sec.~5.4.4). For any polynomial $U$ of degree $\le N$,
\begin{equation}
\inorm{U}_{\mathbb L^q(-1,1)}\le C \Delta x^{(1/q-1/p)}N^{2(1/p - 1/q)}\inorm{U}_{\mathbb L^p(-1,1)}
\end{equation}
for $1\le p\le q\le\infty$, where $\inorm{U}^q_{\mathbb L^q(-1,1)} = \int_{-1}^{1} |U|^q\,\mathrm{d}x$. Therefore, by equivalence of the norms, choosing $p=2$ and $q = \infty$ bounds the maximum norm by the discrete 2-norm
\begin{equation}
\inorm{U}_\infty \le C \Delta x^{-1/2}N \inorm{U}_2  \le C \Delta x^{-1/2}N \inorm{U}_N,
\label{eq:InfinityNormBound}
\end{equation}
where we have used the simpler alternate subscript notation for the norms.

Importantly, the LGL quadrature has the SBP property \cite{gassner2010}. For two polynomials $U$ and $W$,
\begin{equation}
\iprodN{U,W'} = \left\{ U_N W_N - U_0 W_0\right\} - \iprodN{U',W},
\end{equation}
where the prime denotes the derivative with respect to the argument. In particular, if $W = U$,
\begin{equation}
\iprodN{U,U'} = \oneHalf\left\{ U^2_N - U^2_0 \right\}.
\label{eq:UUprimInt}
\end{equation}
\subsection{Energy Bounds for the Scalar Problem}
To study stability of the scalar approximation, we start with $\phi = U$ in \eqref{eq:UDGSEMWeakForm} and use \eqref{eq:UUprimInt}, to find the bound on the energy of $U$
\begin{equation}
\frac{\Delta x}{2}\oneHalf\frac{d}{dt}\inorm{U}^2_N+U F^* |_{-1}^1 - \oneHalf \alpha U^2|_{-1}^1 = 0,
\end{equation}
or
\begin{equation}
\frac{\Delta x}{2}\oneHalf\frac{d}{dt}\inorm{U}^2_N+ U \left(F^* -\oneHalf F\right)|_{-1}^1 = 0,
\end{equation}
where $F = \alpha U$.
Summing over all elements gives
\begin{equation}
\oneHalf\frac{d}{dt} \sum_{k=1}^{K_u} \inorm{U}^2_{N,\underline{e}^k} + U \left.\left(F^* -\oneHalf F\right)\right|_{a}^c - \sum_{k=1}^{K_u-1}\left\{ \jump{U}F^* - \oneHalf \jump{UF}\right\}= 0,
\end{equation}
where $\jump{\cdot}$ is the jump between the left and right states. The second summation represents the contributions at the interior element interfaces.

The next steps, where the element interface and boundary inequalities are found, are standard. Here we consider the upwind numerical flux \eqref{eq:UpwindNumericalFlux}, which now can be written as
\begin{equation}
F^*\left(U^L,U^R\right) = \avg{F} - \oneHalf|\alpha|\jump{U},
\end{equation}
where $\avg{\cdot}$ is the average of the two states. Using the identity
\begin{equation}
\jump{pq} = \avg{p}\jump{q} + \jump{p}\avg{q},
\end{equation}
and the facts that $F = \alpha U$ and $\alpha$ is constant,
\begin{equation}
\jump{UF}= \avg{U}\jump{F} + \jump{U}\avg{F} = 2 \jump{U}\avg{F}.
\end{equation}
So,
\begin{equation}
\jump{U}F^* - \oneHalf \jump{UF} = \jump{U} \avg{F} - \oneHalf|\alpha|\jump{U}^2 - \jump{U}\avg{F} =  - \oneHalf|\alpha|\jump{U}^2
\end{equation}
adds dissipation,
and therefore
\begin{equation}
\oneHalf\frac{d}{dt} \sum_{k=1}^{K_u} \inorm{U}^2_{N,\underline{e}^k} + U \left.\left(F^* -\oneHalf F\right)\right|_{a}^c\le 0.
\label{eq:UEbound1}
\end{equation}

So now we find the influence of the subdomain boundaries. At the right, since $\alpha > 0$,
\begin{equation}
 U\left(F^* -\oneHalf F\right) = U\left( \frac{F^L + F^R}{2} - \frac{F^R - F^L}{2} - \oneHalf F^L \right) = \oneHalf \alpha U^2 \ge 0.
\end{equation}
At the left boundary, $u(a,t) = g(t) = 0$. Again with $\alpha > 0$, and completing the square
\begin{equation}
\begin{split}
 U\left(F^* -\oneHalf F\right) &= U\left( \frac{\alpha g + \alpha U}{2} - \frac{\alpha U - \alpha g}{2} - \oneHalf \alpha U \right) = -\oneHalf\alpha\left( U^2 - 2Ug\right)
 \\&
 = -\oneHalf\alpha\left( U^2 - 2Ug + g^2 - g^2\right) = -\oneHalf\alpha\left( (U-g)^2 - g^2\right)
 \\&
 = \oneHalf\alpha g^2 -\oneHalf\alpha (U-g)^2 .
 \end{split}
\end{equation}
Therefore,
\begin{equation}
\oneHalf\frac{d}{dt} \sum_{k=1}^{K_u} \inorm{U}^2_{N,\underline{e}^k}= -\mathcal D(U) + \oneHalf\alpha g^2,
\end{equation}
where $\mathcal D(U) =\oneHalf|\alpha|\jump{U}^2+ \oneHalf\alpha(U-g)^2 \ge 0$ is the upwind dissipation term.
Since $g=0$,
\begin{equation}
\sum_{k=1}^{K_u} \inorm{U}^2_{N,\underline{e}^k} \le \sum_{k=1}^{K_u} \inorm{\omega_0}^2_{N,\underline{e}^k}
\end{equation}
for any $K_u$ and $N$, so the approximation on the base grid is stable. If we define
\begin{equation}
\inorm{U}^2_{\Omega_u,N} \equiv \sum_{k=1}^{K_u} \inorm{U}^2_{N,\underline{e}^k},
\end{equation}
then
\begin{equation}
\inorm{U}_{\Omega_u,N}  \le \inorm{\omega_0}_{\Omega_u,N}.
\label{eq:ScalarUBound}
\end{equation}
Also, by the norm equivalence, \eqref{eq:NormEquivalence},
\begin{equation}
\inorm{U}_{\Omega_u}  \le \sqrt{3}\inorm{\omega_0}_{\Omega_u},
\label{eq:STDBaseBound}
\end{equation}
so the (continuous) norm of the approximate solution is bounded by the (initial) data. The bound \eqref{eq:STDBaseBound} is the standard one for the DGSEM.

The solution on the overset grid, $\bar e^k = [x_{k-1},x_k],\;  k=1,\ldots,K_v$, with $x_k = b + k\Delta x_k$, satisfies the same approximation except that its boundary condition at the leftmost element is the upwind value $g = U_*(b,t)$, where  $U_*(b,t)$ is the value of the polynomial in the element that contains $x=b$. As before, we assume constant $h=\Delta x_k$. Therefore,
\begin{equation}
\oneHalf\frac{d}{dt} \sum_{k=1}^K \inorm{V}^2_{N,\bar{e}^k} = \oneHalf\frac{d}{dt} \inorm{V}^2_{\Omega_v,N}= -\mathcal D(V) + \oneHalf\alpha U^2_*(b,t).
\end{equation}
Integrating in time,
\begin{equation}
\inorm{V}^2_{\Omega_v,N} \le \inorm{\omega_0}^2_{\Omega_v,N} + \alpha\int_0^T  U^2_*(b,t)dt.
\end{equation}
Stability is then determined by whether or not $U^2_*(b,t)$ can be bounded by data independent of $N$ and $K$ 
\cite{kreiss1970initial,gustafsson1995time,Nordstrom:2017yu}.

Since summation-by-parts applies only to an entire element, the discrete equivalent of Technique 1 is not available. To get a bound on the inflow value from the base mesh, however, we note that $U_*(b,t) \le \inorm{U}_\infty = \max_{\underline{e}^k} \inorm{U}_\infty$. Then by \eqref{eq:InfinityNormBound} and \eqref{eq:ScalarUBound},
\begin{equation}
U^2_*(b,t) \le C \Delta x^{-1}N^2\inorm{U}^2_{\Omega_u,N} \le  C \Delta x^{-1}N^2\inorm{\omega_0}^2_{\Omega_u,N}.
\end{equation}
\begin{rem}
We can bound the value $U(b,t)$ independent of time only because there is no coupling with, or feedback from, the overset domain and the trivial external boundary conditions. This will not be the case more generally where $\inorm{\statevec U}^2_{\Omega_u,N}$ will depend on $\statevec V$ due to feedback from the overset grid.
\end{rem}
Using the norm equivalence again, we have
\begin{lem}
The DGSEM approximations to the solutions of the overset grid problem for the first order scalar wave equation satisfy the bounds
\begin{equation}
\begin{gathered}
\inorm{U}^2_{\Omega_u}  \le \sqrt{3}\inorm{\omega_0}^2_{\Omega_u}\hfill\\
\inorm{V}^2_{\Omega_v} \le \sqrt{3}\inorm{\omega_0}^2_{\Omega_v} + C \Delta x^{-1}N^2  T \inorm{\omega_0}^2_{\Omega_u} .
\end{gathered}
\label{eq:ScalarBounds}
\end{equation}
\label{lem:EnergyBounds}
\end{lem}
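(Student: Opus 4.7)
The plan is to prove the two inequalities in sequence, because the bound on $V$ will use the bound on $U$ as input. The overall structure follows the standard DGSEM energy argument on each subdomain, but with the inflow term on $\Omega_v$ supplied by the donor mesh rather than by prescribed data, and that is where the real subtlety lies.

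\textbf{Bound on $U$.} I would take $\phi = U$ in the elemental weak form \eqref{eq:UDGSEMWeakForm} and apply the SBP identity \eqref{eq:UUprimInt} to get $\frac{\Delta x}{2}\oneHalf\frac{d}{dt}\inorm{U}^2_N + U(F^*-\oneHalf F)|_{-1}^{1} = 0$ on each element. Summing over the $K_u$ elements in $\Omega_u$ leaves physical boundary terms at $x=a$ and $x=c$ plus a sum of interface contributions $\jump{U}F^* - \oneHalf\jump{UF}$. Combining the upwind flux \eqref{eq:UpwindNumericalFlux} with the identity $\jump{pq}=\avg{p}\jump{q}+\jump{p}\avg{q}$ collapses each interior jump to $-\oneHalf|\alpha|\jump{U}^2 \le 0$. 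At $x=a$, completing the square with $g=0$ gives $-\oneHalf\alpha(U-g)^2 \le 0$, while at $x=c$ the outflow term is $\oneHalf\alpha U^2 \ge 0$. Hence the discrete energy $\inorm{U}^2_{\Omega_u,N}$ is non-increasing in time, and integration followed by the norm equivalence \eqref{eq:NormEquivalence} yields the first line of \eqref{eq:ScalarBounds}.

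\textbf{Bound on $V$.} The identical manipulations apply on $\Omega_v$: all interior element jumps again produce dissipation, and the outflow at $x=d$ is sign-definite. The only new feature is that the inflow boundary data at $x=b$ is $g(t)=U_*(b,t)$, the value of the base polynomial in the element containing $x=b$. Completing the square leaves a non-sign-definite source $\oneHalf\alpha U_*^2(b,t)$, and integrating in time gives $\inorm{V}^2_{\Omega_v,N} \le \inorm{\omega_0}^2_{\Omega_v,N} + \alpha\int_0^T U_*^2(b,t)\,dt$.

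\textbf{Main obstacle and resolution.} The hard part is converting $\int_0^T U_*^2(b,t)\,dt$ into a bound by the initial data alone. Because SBP holds only element-wise, the continuous Technique 1 --- reinterpreting a boundary value as a volume integral on a complementary subdomain --- is not available discretely. My workaround is to estimate $U_*^2(b,t) \le \inorm{U}_\infty^2$ on the element containing $b$ and invoke the inverse inequality \eqref{eq:InfinityNormBound} to get $U_*^2(b,t) \le C\Delta x^{-1}N^2\inorm{U}^2_{\Omega_u,N}$, after which the $U$-bound already proved reduces the integrand to initial data. Time integration contributes the factor $T$, and one last application of \eqref{eq:NormEquivalence} produces the second line of \eqref{eq:ScalarBounds}. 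This is precisely the step at which uniformity in $N$, $\Delta x$, and $T$ is lost --- the qualitative message the lemma is designed to record.
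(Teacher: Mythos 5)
Your proposal is correct and follows essentially the same route as the paper's own argument: elementwise energy estimate with $\phi=U$ and SBP, upwind dissipation at interior interfaces and at $x=a$, the inflow source $\oneHalf\alpha U_*^2(b,t)$ on $\Omega_v$, and the inverse inequality \eqref{eq:InfinityNormBound} combined with the already-established $U$-bound to control that source, with the factor $T$ arising from time integration. No substantive differences.
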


\begin{rem}
Comparing \eqref{eq:ScalarBounds} with the analytical bounds \eqref{eq:Sharp1DScalarEstimate}, the approximation allows for possible growth in time and growth with $N$ and $\Delta x^{-1}$. We get the growth in time because the approximation does not have the discrete equivalent to Technique 1 \eqref{eq:ubarvalue}. Instead, we can only consider energy introduced to the overset grid from the entire base grid $\Omega_{u,N}$, or at least the entire element in which the overlap point lies.  Note that the high order approximation has a domain of dependence larger than the analytical one, and so downwind perturbations influence the solutions upwind.
\end{rem}

The question, now, is whether or not the interpolation condition for the scalar equation is destabilizing so that linear growth in time in \eqref{eq:ScalarBounds} is real and not just an artifact of weak bounds. To address that question, we examine the eigenvalues of the discrete system in \ref{AppendixB}. There, we show that using the interpolant of the base solution as the boundary value of the overset domain has no effect on the eigenvalues of the system matrix. Furthermore, in the absence of dissipation, the eigenvalues lie on the imaginary axis, so for the scalar problem the growth in time shown in \eqref{eq:ScalarBounds} is pessimistic.


We also have
\begin{thm}
For fixed polynomial order, $N$, mesh spacing $\Delta x$, and time, $T$, the total energy of the overset grid problem is bounded by the initial data.
\label{thm:ScalarBoundedEnergy}
\end{thm}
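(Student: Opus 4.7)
The theorem is essentially an immediate corollary of Lemma~\ref{lem:EnergyBounds}, so the plan is short. I would define the total discrete energy to be $E_{\mathrm{tot}}(T) \equiv \inorm{U}^2_{\Omega_u} + \inorm{V}^2_{\Omega_v}$, then simply add the two inequalities in \eqref{eq:ScalarBounds}. This yields
\begin{equation*}
E_{\mathrm{tot}}(T) \le \sqrt{3}\left(\inorm{\omega_0}^2_{\Omega_u} + \inorm{\omega_0}^2_{\Omega_v}\right) + C\, \Delta x^{-1} N^{2}\, T\, \inorm{\omega_0}^2_{\Omega_u}.
\end{equation*}
The right-hand side depends only on the initial data $\omega_0$ and on the parameters $N$, $\Delta x$, and $T$. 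Once those are held fixed, the entire right-hand side becomes a finite constant multiple of the norms of the initial data, which is the claim.

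The key point to emphasize is the role of the three fixed quantities. The $U$-bound in \eqref{eq:ScalarBounds} is sharp and uniform in all of them, but the $V$-bound inherits a factor of $C \Delta x^{-1} N^{2} T$ from the inverse inequality \eqref{eq:InfinityNormBound} used to control the donor value $U_*(b,t)$ and from the subsequent time integration. Thus the argument gives no uniform bound as $N\to\infty$, $\Delta x\to 0$, or $T\to\infty$, which is precisely why the statement is qualified. There is no real obstacle in the proof itself; the only subtlety is a conceptual one, namely that this uniformity failure is exactly the reason why Lemma~\ref{lem:EnergyBounds} does not by itself establish stability in the sense needed for convergence, and motivates the dissipation mechanism (and eigenvalue analysis of \ref{AppendixB}) introduced in the remainder of the paper.
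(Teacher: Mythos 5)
Your proposal is correct and follows essentially the same route as the paper: both arguments are immediate corollaries of Lemma~\ref{lem:EnergyBounds}, reading off that once $N$, $\Delta x$, and $T$ are frozen the right-hand side is a fixed constant times the initial data. The only cosmetic difference is that the paper absorbs the linear-in-$T$ growth into a generic exponential $Ce^{\mu T}$ (and remarks that the constant double counts the overlap energy and blows up as $N\to\infty$ or $\Delta x\to 0$), whereas you keep the factor $C\Delta x^{-1}N^{2}T$ explicit --- which, if anything, is the sharper statement.
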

\begin{proof}
For fixed $N$ and $\Delta x$
\begin{equation}
\begin{split}
\inorm{V(T)}^2_{\Omega_v} &\le \sqrt{3}\inorm{\omega_0}^2_{\Omega_v} +  CT \inorm{\omega_0}^2_{\Omega_u} \\&
\le Ce^{\mu T}\left(\inorm{\omega_0}^2_{\Omega_u} + \inorm{\omega_0}^2_{\Omega_v}\right) \\&
\le 2Ce^{\mu T}\inorm{\omega_0}^2_{\Omega} \\&
= Ce^{\mu T}
\end{split}
\end{equation}
for some $\mu$ and some generic $C$ that depends on $N$ and $\Delta x$.
This bound is not sharp and it double counts the initial energy in the overlap domain. It also does not imply stability, since $C$ grows with $N\rightarrow \infty$ and/or $\Delta x\rightarrow 0$.
\end{proof}

\subsection{Energy Bounds for the Characteristic Approximation of a System of Equations}\label{sec:CharacteristicScheme}

Formally, we could use symmetrization and the results of the previous section as components to the system and combine them to get a final stability results. However that analysis breaks down when the coefficients are variable or if there is low order coupling. It also does not extend to multiple space dimensions. The more general approach is again to apply an energy method.

To study stability by the energy method, we set $\statevecGreek\phi \leftarrow \statevec U$ in \eqref{eqUDGSEMWeakForm}.
Using SBP and symmetry of the coefficient matrix,
\begin{equation}
\iprod{\mmatrix A \statevec U_\xi,\statevec U}_N = \left.\statevec U^T\statevec F\right|_{-1}^{1} - \iprodN{\statevec U,\mmatrix A\statevec U_\xi},
\end{equation}
so
\begin{equation}
\iprod{\mmatrix A \statevec U_\xi,\statevec U}_N = \oneHalf  \left.\statevec U^T\statevec F\right|_{-1}^{1}.
\label{eqSBPOnVolume}
\end{equation}
Therefore,
\begin{equation}
\frac{\Delta x}{2} \iprod{\statevec U_t,\statevec U}_N  + \left.\statevec U^T\left\{ \statevec F^*- \oneHalf \statevec F \right\} \right|_{-1}^1= 0,
\end{equation}
i.e.
\begin{equation}
\frac{\Delta x}{2} \oneHalf\frac{d}{dt}  \inormN{\statevec U}^2  +\left.\statevec U^T\left\{ \statevec F^*- \oneHalf \statevec F \right\} \right|_{-1}^1= 0.
\end{equation}
Summing over all elements gives
\begin{equation}
\oneHalf\frac{d}{dt} \sum_{k=1}^{K_u} \inorm{\statevec U}^2_{N,\underline{e}^k} + \statevec U^T \left.\left(\statevec F^* -\oneHalf \statevec F\right)\right|_{a}^c = \sum_{k=1}^{{K_u}-1}\left\{ \jump{\statevec U}^T\statevec F^* - \oneHalf \jump{\statevec U^T\statevec F}\right\}.
\label{eq:UDomainEnergy}
\end{equation}

The internal interfaces again introduce dissipation. As before, because $\mmatrix A$ is symmetric and constant,
\begin{equation}
\jump{\statevec U^T \statevec F}= \avg{\statevec U}^T\jump{\statevec F} + \jump{\statevec U}^T\avg{\statevec F} = 2 \jump{\statevec U}^T\avg{\statevec F}.
\end{equation}
Then, again,
\begin{equation}
\begin{split}
\left\{\jump{\statevec U}^T\statevec F^* - \oneHalf \jump{\statevec U^T\statevec F}\right\} &= \jump{\statevec U}^T\avg{\statevec F} - \oneHalf\jump{\statevec U}^T |\mmatrix A|\jump{\statevec U}- \jump{\statevec U}^T\avg{\statevec F}
\\ &= - \oneHalf\jump{\statevec U}^T |\mmatrix A|\jump{\statevec U}\le 0
\end{split}
\label{eq:RiemannInterfaceBound}
\end{equation}
introduces dissipation.

At the subdomain boundaries, we have
\begin{equation}
\begin{gathered}
BTL \equiv \statevec U^T \left.\left(\statevec F^*(\statevec g_L,\statevec U) -\oneHalf \statevec F\right)\right|_{a} \\
BTR \equiv \statevec U^T \left.\left(\statevec F^*(\statevec U, \statevec g_R) -\oneHalf \statevec F\right)\right|_{c},
\end{gathered}
\end{equation}
where $\statevec g_L$, $\statevec g_R$ are the left and right external states.

For $BTL$,
\begin{equation}
\begin{split}
BTL &= \statevec U^T\left(\mmatrix A^+\statevec g_L  + \mmatrix A^-\statevec U - \oneHalf (\mmatrix A^+ + \mmatrix A^- )\statevec U\right)
\\& = \statevec U^T\mmatrix A^+\statevec g_L - \oneHalf  \statevec U^T\mmatrix A^+  \statevec U - \oneHalf \statevec U^T|\mmatrix A^-|   \statevec U.
\end{split}
\end{equation}
We complete the square on the first two terms so
\begin{equation}
BTL = \oneHalf\statevec g_L^T\mmatrix A^+\statevec g_L -\oneHalf\left( \statevec g_L^T\mmatrix A^+\statevec g_L -2  \statevec U^T\mmatrix A^+\statevec g_L + \statevec U^T\mmatrix A^+  \statevec U\right) - \oneHalf \statevec U^T|\mmatrix A^-|   \statevec U.
\end{equation}
For convenience, let $\bar {\statevec g}_L \equiv \sqrt{\mmatrix A^+}\statevec g_L$, $\bar{\statevec U}_L \equiv \sqrt{\mmatrix A^+}\statevec U(a)$. Then
\begin{equation}
BTL = \oneHalf\bar {\statevec g}_L^T\bar {\statevec g}_L -\oneHalf\left(\bar{\statevec g}_L - \bar{\statevec U}_L\right)^2 - \left.\oneHalf \statevec U^T|\mmatrix A^-|   \statevec U\right|_a.
\label{eq:BTLSystem}
\end{equation}
The last two terms are non-positive (dissipative). The first depends only on the data.

Similarly,
\begin{equation}
BTR = -\oneHalf\bar {\statevec g}^T_R\bar {\statevec g}_R +\oneHalf\left(\bar{\statevec g}_R - \bar{\statevec U}_R\right)^2 + \left.\oneHalf \statevec U^T\mmatrix A^+   \statevec U\right|_c,
\end{equation}
where now $\bar {\statevec g}_R \equiv \sqrt{|\mmatrix A^-|}\statevec g$, $\bar{\statevec U}_R \equiv \sqrt{|\mmatrix A^-|}\statevec U(c)$.

Therefore, \eqref{eq:UDomainEnergy} becomes
\begin{equation}
\frac{d}{dt} \sum_{k=1}^{K_u} \inorm{\statevec U}^2_{N,\underline{e}^k} =   \statevec g_L^T\mmatrix A^+\statevec g_L  +\statevec g_R^T|\mmatrix A^-|\statevec g_R - \mathcal D,
\end{equation}
where $\mathcal D \ge 0$ is the dissipation term
\begin{equation}
\mathcal D = \sum_{k=1}^{{K_u}-1}\left\{\jump{\statevec U}^T |\mmatrix A|\jump{\statevec U}\right\} + \left(\bar{\statevec g}_L - \bar{\statevec U}_L\right)^2 + \left. \statevec U^T|\mmatrix A^-|   \statevec U\right|_a + \left(\bar{\statevec g}_R - \bar{\statevec U}_R\right)^2 + \left. \statevec U^T\mmatrix A^+   \statevec U\right|_c.
\label{eq:CharDissipation}
\end{equation}

Since we are taking $\statevec g_L  = 0$ and $\statevec g_R = \statevec V(c)$ for the base domain,
\begin{equation}
\frac{d}{dt}\inorm{\statevec U}^2_{\Omega_u,N} \equiv \frac{d}{dt} \sum_{k=1}^{K_u} \inorm{\statevec U}^2_{N,\underline{e}^k}  = \statevec  V^T(c)|\mmatrix A^-|\statevec  V(c) -\mathcal D\le \vnorm{\statevec V(c)}^2\inorm{|\mmatrix A^-|}_2 = \rho(|\mmatrix A^-|)\vnorm{\statevec V(c)}^2,
\label{eq:UBoundBase}
\end{equation}
where $\rho$ is the spectral radius and $\vnorm{\statevec V}$ is the Euclidean vector norm.

The solution on the overset grid satisfies the same weak form, but with $\statevec g_L = \statevec U(b)$ and $\statevec g_R = 0$, so
\begin{equation}
\frac{d}{dt}\inorm{\statevec V}^2_{\Omega_u,N} =\frac{d}{dt} \sum_{k=1}^{K_v} \inorm{\statevec V}^2_{N,\bar{e}^k} \le   \statevec U^T(b)\mmatrix A^+\statevec U(b) \le \vnorm{\statevec U(b)}^2\inorm{\mmatrix A^+}_2 =\rho(\mmatrix A^+)\vnorm{\statevec U(b)}^2.
\label{eq:VUBoundOverset}
\end{equation}

Equations \eqref{eq:UBoundBase} and \eqref{eq:VUBoundOverset} are coupled, so we add them together to get the time derivative of the {\it combined energy norm}
\begin{equation}
\frac{d}{dt}\inorm{\statevec U}^2_{\Omega_u,N} + \frac{d}{dt} \inorm{\statevec V}^2_{\Omega_v,N} \le \rho(\mmatrix A^+)\vnorm{\statevec U(b)}^2+\rho(|\mmatrix A^-|)\vnorm{\statevec V(c)}^2.
\label{eq:CombinedSystemBound0}
\end{equation}

We now need to bound the right hand side of \eqref{eq:CombinedSystemBound0}. As for the scalar equation, we can bound the infinity norm with the 2-norm of the approximation. For each component in $\statevec U$,
\begin{equation}
U_i(b) \le \max_x \;U_i \le C\Delta x^{-\oneHalf}N\inorm{U_i}_{\Omega_u,N}
\end{equation}
so
\begin{equation}
\vnorm{\statevec U(b)}^2 \le C \Delta x^{-1}N^2\inorm{\statevec U}_{\Omega_u,N}^2,
\end{equation}
and similarly for $\statevec V(c)$.
So we can bound the time derivative of the sum in \eqref{eq:CombinedSystemBound0} as
\begin{equation}
\begin{split}
\frac{d}{dt}\left\{\inorm{\statevec U}^2_{\Omega_u,N} + \inorm{\statevec V}^2_{\Omega_v,N}\right\} &\le C\rho(\mmatrix A^+)\Delta x^{-1}N^2\inorm{\statevec U}_{\Omega_u,N}^2+ C\rho(|\mmatrix A^-|)\Delta x^{-1}N^2\inorm{\statevec V}_{\Omega_v,N}^2
\\&
\le C\Delta x^{-1}N^2\rho(\mmatrix A)\left\{\inorm{\statevec U}_{\Omega_u,N}^2 +\inorm{\statevec V}_{\Omega_v,N}^2\right\},
\end{split}
\label{eq:CombinedBounddt}
\end{equation}
using the fact that $\rho(\mmatrix A^\pm)\le \rho(\mmatrix A)$. When we integrate \eqref{eq:CombinedBounddt} in time,
\begin{equation}
\inorm{\statevec U(T)}^2_{\Omega_u,N} + \inorm{\statevec V(T)}^2_{\Omega_v,N} \le \left\{\inorm{\statevecGreek \omega_0}^2_{\Omega_u,N} + \inorm{\statevecGreek \omega_0}^2_{\Omega_v,N}\right\} e^{C\Delta x^{-1}N^2 T\rho\left(\mmatrix A\right)},
\label{eq:FinalCharacteristicBound}
\end{equation}
we have shown that
\begin{thm}
The energy of the system problem with characteristic boundary conditions in one space dimension is bounded by the initial data for fixed polynomial order, mesh size, and time.
\end{thm}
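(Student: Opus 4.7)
The statement to prove is that the combined energy $\inorm{\statevec U(T)}^2_{\Omega_u,N} + \inorm{\statevec V(T)}^2_{\Omega_v,N}$ can be bounded by the initial data whenever $N$, $\Delta x$ and $T$ are held fixed, even though the bound need not survive the limits $N\to\infty$ or $\Delta x\to 0$. All the hard work has already been assembled above the statement, so the plan is essentially to close the loop with a Gr\"onwall-type argument applied to \eqref{eq:CombinedBounddt}.

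The plan is as follows. First, I would take \eqref{eq:CombinedBounddt} as the starting differential inequality, regarding $Y(t):=\inorm{\statevec U(t)}^2_{\Omega_u,N}+\inorm{\statevec V(t)}^2_{\Omega_v,N}$ as a single nonnegative scalar function of time. That inequality has the shape $Y'(t)\le \mu\, Y(t)$ with $\mu=C\Delta x^{-1}N^2\rho(\mmatrix A)$, where $C$ is a mesh- and order-independent constant from the inverse inequality \eqref{eq:InfinityNormBound}. By Gr\"onwall's inequality, $Y(T)\le Y(0)\,e^{\mu T}$, which is exactly \eqref{eq:FinalCharacteristicBound}. With the initial condition matching that of the OP on each subdomain, $Y(0)=\inorm{\statevecGreek\omega_0}^2_{\Omega_u,N}+\inorm{\statevecGreek\omega_0}^2_{\Omega_v,N}$, so the right-hand side is indeed determined by data alone once $N,\Delta x,T$ are fixed.

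Second, to state the conclusion in the continuous $\mathbb L^2$ norm I would invoke the norm equivalence \eqref{eq:NormEquivalence} on each element and sum over elements to replace the discrete norms of the initial data by their continuous counterparts at the cost of a further finite constant (absorbed into $C$). The resulting bound is of the form $\inorm{\statevec U(T)}^2_{\Omega_u}+\inorm{\statevec V(T)}^2_{\Omega_v}\le C(N,\Delta x)\,e^{\mu T}\bigl(\inorm{\statevecGreek\omega_0}^2_{\Omega_u}+\inorm{\statevecGreek\omega_0}^2_{\Omega_v}\bigr)$, i.e.\ a constant that depends on $N,\Delta x,T$ times initial data, mirroring the scalar Theorem~\ref{thm:ScalarBoundedEnergy}.

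There is no genuine obstacle at this stage: the derivation of \eqref{eq:CombinedBounddt} required the real work (SBP on each element, Riemann-flux dissipation at interior interfaces, completion of the square at physical boundaries, and the inverse inequality bridging pointwise donor values to $\mathbb L^2$ norms), and Gr\"onwall is then immediate. The thing worth emphasizing in the write-up is what the result does \emph{not} say: the constant $\mu$ blows up as $N\to\infty$ or $\Delta x\to 0$, so this is a finite-resolution bound rather than stability in the Kreiss sense, and the right-hand side double counts the initial energy in the overlap region. I would end the proof with a short remark noting this, in parallel with the closing sentence of Theorem~\ref{thm:ScalarBoundedEnergy}, so that the reader sees immediately why this motivates the penalty formulation developed in the subsequent section.
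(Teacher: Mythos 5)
Your proposal is correct and follows essentially the same route as the paper: the paper's proof of this theorem is precisely the integration in time (Gr\"onwall) of the differential inequality \eqref{eq:CombinedBounddt} to obtain \eqref{eq:FinalCharacteristicBound}, with the constant $C\Delta x^{-1}N^2\rho(\mmatrix A)$ fixed once $N$ and $\Delta x$ are fixed. Your additional remarks on norm equivalence and on the bound not implying stability as $N\to\infty$ or $\Delta x\to 0$ match the paper's own follow-up remark.
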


\begin{rem}
Again we see that the bounds do not imply that the approximation is stable, but for fixed order and mesh size the approximate solutions can grow at most exponentially fast. The original problem does not have solutions that grow.
\end{rem}

From \eqref{eq:CharDissipation}, we see that dissipation in the characteristic formulation of the overset grid problem comes from three sources. The first is the inter-element dissipation from the upwind numerical flux. The second comes from the upwind numerical flux at the physical boundary points.  The final source is the dissipation associated with the upwind numerical flux at the overlap interface points at $x = b$ and $x = c$.

The question again arises as to whether or not the growth in the solutions implied by \eqref{eq:FinalCharacteristicBound} is due to lack of better bounds or is intrinsic to the coupling between the domains. In \ref{AppendixA}, we derive bounds for the dissipation-free approximation and show that the solutions remain bounded in the same way as in \eqref{eq:FinalCharacteristicBound}, but that the growth rate can be larger. In \ref{AppendixB} we show that the coupling terms are destabilizing in that the approximation allows for eigenvalues that lead to growth. This differs from the scalar case, and shows the pitfalls of extrapolating scalar results. The results of \ref{AppendixB} imply that dissipation is necessary for a solution to not blow up in time, and that the upwind numerical flux is sufficient to ensure the eigenvalues do not have positive real parts, at least for the one-dimensional problem. This suggests that, in one space dimension, the dissipation terms $\mathcal D$ in \eqref{eq:UBoundBase} (and implicit in \eqref{eq:VUBoundOverset}) are sufficient to counteract the growth term in \eqref{eq:FinalCharacteristicBound} for a fixed mesh and polynomial order. However, in general it is not known how much dissipation is necessary, a priori, and if it is sufficient to cancel the growth term in \eqref{eq:FinalCharacteristicBound} as $\Delta x\rightarrow 0,N\rightarrow\infty$.

\subsection{Energy Bounds for the Penalty Formulation}

To see how the energy evolves in the penalty formulation, we replace $\statevecGreek \phi\leftarrow \statevec U$ in \eqref{eq:DGSEMChimeraU},  and $\statevecGreek \varphi\leftarrow \statevec V$ in \eqref{eq:DGSEMChimeraV}, and use \eqref{eqSBPOnVolume} to obtain
\begin{equation}
 \iprod{\mathcal J\statevec U,\statevec U_t}_N
 + \left.\statevec U^T\left\{\hat{\statevec F}-\oneHalf\statevec F\right\}\right|^1_{-1}
 +
 \mathcal I^k_u(b)  \left.\statevec U^T\ \mmatrix{$\Sigma$}_u^b (\statevec U-\statevec V)\right|_b+
 \mathcal I^k_u(c)  \left.\statevec U^T\ \mmatrix{$\Sigma$}_u^c (\statevec U-\statevec V)\right|_c= 0
\end{equation}
and
\begin{equation}
\iprod{\mathcal J\statevec V,\statevec V_t}_{N }
+\left.\statevec V^T\left\{\hat{\statevec F}-\oneHalf\statevec F\right\}\right|^1_{-1}
+  \mathcal I^k_v(b)  \left.\statevec V^T \mmatrix{$\Sigma$}_v^b (\statevec V-\statevec U)\right|_b
+   \mathcal I^k_v(c)  \left.\statevec V^T \mmatrix{$\Sigma$}_v^c (\statevec V-\statevec U)\right|_c= 0.
\end{equation}
Summing over all the elements, and
taking into account the dissipativity of the numerical flux, \eqref{eq:RiemannInterfaceBound}, and the fact that $BLT|_a = 0$ on the base grid,
\begin{equation}
\oneHalf \frac{d}{dt}\inorm{\statevec U}^2_{\Omega_u,N}
+ \oneHalf \statevec U^T(c)\mmatrix A\statevec U(c)
+ \statevec U^T(b)\mmatrix{$\Sigma$}_u^b (\statevec U(b)-\statevec V(b))
+ \statevec U^T(c)\mmatrix{$\Sigma$}_u^c (\statevec U(c)-\statevec V(c))\le 0.
\label{eq:Energy1}
\end{equation}
Similarly,
\begin{equation}
\oneHalf \frac{d}{dt}\inorm{\statevec V}^2_{\Omega_v,N}
- \oneHalf \statevec V^T(b)\mmatrix A\statevec V(b)
+ \statevec V^T(b)\mmatrix{$\Sigma$}_v^b (\statevec V(b)-\statevec U(b))
+ \statevec V^T(c)\mmatrix{$\Sigma$}_v^c (\statevec V(c)-\statevec U(c))\le 0.
\label{eq:Energy2}
\end{equation}
Adding the two, the time derivative of the  combined energy norm is
\begin{equation}
\begin{split}
\oneHalf \frac{d}{dt}&\left\{ \inorm{\statevec U}^2_{\Omega_u,N} + \frac{d}{dt}\inorm{\statevec V}^2_{\Omega_v,N} \right\}
\\&
- \oneHalf \statevec V^T(b)\mmatrix A\statevec V(b)+ \statevec V^T(b)\mmatrix{$\Sigma$}_v^b (\statevec V(b)-\statevec U(b))+ \statevec U^T(b)\mmatrix{$\Sigma$}_u^b (\statevec U(b)-\statevec V(b))
\\&
+\oneHalf \statevec U^T(c)\mmatrix A\statevec U(c) + \statevec U^T(c)\mmatrix{$\Sigma$}_u^c (\statevec U(c)-\statevec V(c)) + \statevec V^T(c)\mmatrix{$\Sigma$}_v^c (\statevec V(c)-\statevec U(c))
\\& \le 0,
\end{split}
\label{eq:CombinedEnergy}
\end{equation}
which we will write as
\begin{equation}
 \frac{d}{dt}\mathcal E^2 + \mathcal P_b + \mathcal P_c \le 0,
\label{eq:CombinedEnergyShort}
\end{equation}
where $\mathcal E^2 = \inorm{\statevec U}^2_{\Omega_u,N} + \inorm{\statevec V}^2_{\Omega_v,N}$ and the penalty contributions are
\begin{equation}
\mathcal P_b \equiv -  \statevec V^T(b)\mmatrix A\statevec V(b)+ 2\statevec V^T(b)\mmatrix{$\Sigma$}_v^b (\statevec V(b)-\statevec U(b))+ 2\statevec U^T(b)\mmatrix{$\Sigma$}_u^b (\statevec U(b)-\statevec V(b)) ,
\label{eq:OrigPb}
\end{equation}
\begin{equation}
\mathcal P_c \equiv  \statevec U^T(c)\mmatrix A\statevec U(c) + 2\statevec U^T(c)\mmatrix{$\Sigma$}_u^c (\statevec U(c)-\statevec V(c)) + 2\statevec V^T(c)\mmatrix{$\Sigma$}_v^c (\statevec V(c)-\statevec U(c)).
\label{eq:OrigPc}
\end{equation}
\begin{rem}
Remember that the energy, $\mathcal E$, double counts the energy in the overlap region, so it bounds but does not match the norm of the solution on $\Omega$. Using norm equivalence,
\begin{equation}
 \inorm{\statevec U}^2_{\Omega_u,N} + \inorm{\statevec V}^2_{\Omega_v,N}  \ge E(\statevec U,\statevec V) \equiv \inorm{\statevec U}_{\Omega_u}^2 + \inorm{\statevec V}_{\Omega_v}^2  - \left\{ \eta \inorm{\statevec U}_{\Omega_{O}}^2 +
 (1-\eta)\inorm{\statevec V}_{\Omega_{O}}^2 \right\}
\end{equation}
for any $\eta\in (0,1)$. Again, the quantity on the right is the {\bf overset domain norm} that is equivalent to the true energy norm on $\Omega$ when $\statevec U = \statevec V$.
\end{rem}

The quantities $\mathcal P_{b,c}$ in \eqref{eq:OrigPb}-\eqref{eq:OrigPc} differ from those that appear in the overset grid norm in \cite{KOPRIVA2022110732}. Each is missing two terms due to the inability to remove the double counting in the overlap region, $\Omega_O$. Therefore, let us add and subtract those missing terms to match the analytical expressions,
\begin{equation}
\begin{split}
\mathcal P_b &= \eta\statevec U^T(b)\mmatrix A\statevec U(b) -  \eta\statevec V^T(b)\mmatrix A\statevec V(b)+ 2\statevec V^T(b)\mmatrix{$\Sigma$}_v^b (\statevec V(b)-\statevec U(b))+ 2\statevec U^T(b)\mmatrix{$\Sigma$}_u^b (\statevec U(b)-\statevec V(b))
\\& - \eta\statevec U^T(b)\mmatrix A\statevec U(b) -(1-\eta)\statevec V^T(b)\mmatrix A\statevec V(b)
\end{split}
\end{equation}
\begin{equation}
\begin{split}
\mathcal P_c &=  (1-\eta)\statevec U^T(c)\mmatrix A\statevec U(c) - (1-\eta)\statevec V^T(c)\mmatrix A\statevec V(c) + 2\statevec U^T(c)\mmatrix{$\Sigma$}_u^c (\statevec U(c)-\statevec V(c)) + 2\statevec V^T(c)\mmatrix{$\Sigma$}_v^c (\statevec V(c)-\statevec U(c))
\\& +\eta \statevec U^T(c)\mmatrix A\statevec U(c)+ (1-\eta)\statevec V^T(c)\mmatrix A\statevec V(c).
\end{split}
\end{equation}

With the added terms, we can re-write $\mathcal P_{b,c}$ in the matrix-vector form
\begin{equation}
\mathcal P_{b,c} = \left. \left[\begin{array}{c}\statevec U \\\statevec V\end{array}\right]^T\mmatrix M_{b,c}  \left[\begin{array}{c}\statevec U \\\statevec V\end{array}\right] \right|_{b,c} + Q_{b,c},
\end{equation}
\begin{equation}
\mmatrix M_b =\left[\begin{array}{cc} \eta\mmatrix A +2\mmatrix{$\Sigma$}^b_u& -(\mmatrix{$\Sigma$}^b_u  + \mmatrix{$\Sigma$}^b_v)\\-(\mmatrix{$\Sigma$}^b_u+ \mmatrix{$\Sigma$}^b_v) & -\eta\mmatrix A + 2\mmatrix{$\Sigma$}^b_v \end{array}\right]
,\quad\mmatrix M_c =\left[\begin{array}{cc} (1-\eta)A + 2\mmatrix{$\Sigma$}^c_u& -(\mmatrix{$\Sigma$}^c_u  + \mmatrix{$\Sigma$}^c_v)\\-(\mmatrix{$\Sigma$}^c_u+ \mmatrix{$\Sigma$}^c_v) &  -(1-\eta)\mmatrix A +2\mmatrix{$\Sigma$}^c_v \end{array}\right]
\end{equation}
and
\begin{equation}
Q_b = - \eta\statevec U^T(b)\mmatrix A\statevec U(b) -(1-\eta)\statevec V^T(b)\mmatrix A\statevec V(b),\quad Q_c =\eta \statevec U^T(c)\mmatrix A\statevec U(c)+ (1-\eta)\statevec V^T(c)\mmatrix A\statevec V(c)
\end{equation}
contain the missing terms.

With a proper choice of the coefficient matrices, $\mmatrix M_{b,c} \ge 0$, all but the missing terms contained in $Q_{b,c}$ are dissipative.
From \cite{KOPRIVA2022110732}, the conditions for $\mmatrix M_{b,c} \ge 0$ is guaranteed if $\mmatrix{$\Sigma$}_v>0$, $\mmatrix{$\Sigma$}_u>0$ and
\eqref{eq:1DSigmaMatrixConditionsWi} holds.
Then choosing the penalty matrices with the conditions 
\eqref{eq:1DSigmaMatrixConditionsWi},
\begin{equation}
\begin{split}
 \frac{d}{dt} \left\{ \inorm{\statevec U}^2_{\Omega_u,N} + \frac{d}{dt}\inorm{\statevec V}^2_{\Omega_v,N} \right\} \le 
 & \ \eta \statevec U^T(b)\mmatrix A\statevec U(b)+ (1-\eta)\statevec V^T(b)\mmatrix A\statevec V(b)
 \\& - \eta\statevec U^T(c)\mmatrix A\statevec U(c) -(1-\eta)\statevec V^T(c)\mmatrix A\statevec V(c).
 \end{split}
\end{equation}
Again splitting the matrix $\mmatrix A = \mmatrix A^+ - |\mmatrix A^-|$, and bounding terms that are non-positive,
\begin{equation}
\begin{split}
 \frac{d}{dt} \left\{ \inorm{\statevec U}^2_{\Omega_u,N} + \frac{d}{dt}\inorm{\statevec V}^2_{\Omega_v,N} \right\}
 \le & \ \eta \statevec U^T(b)\mmatrix A^+\statevec U(b)+ (1-\eta)\statevec V^T(b)\mmatrix A^+\statevec V(b)
 \\&+\eta\statevec U^T(c)|\mmatrix A^-|\statevec U(c) +(1-\eta)\statevec V^T(c)|\mmatrix A^-|\statevec V(c).
 \end{split}
\end{equation}
Then as in \eqref{eq:VUBoundOverset},
\begin{equation}
\begin{split}
 \frac{d}{dt} \left\{ \inorm{\statevec U}^2_{\Omega_u,N} + \frac{d}{dt}\inorm{\statevec V}^2_{\Omega_v,N} \right\}
 \le
& \ \eta\rho(\mmatrix A^+)\vnorm{\statevec U(b)}^2+ (1-\eta)\rho(\mmatrix A^+)\vnorm{\statevec V(b)}^2
\\&+ \eta\rho(|\mmatrix A^-|)|{\statevec U(c)}|^2 +(1-\eta)\rho(|\mmatrix A^-|)|{\statevec V(c)}|^2.
\end{split}
\end{equation}
But
\begin{equation}
\vnorm{\statevec U(b)}^2 \le C\Delta x^{-1}N^2\inorm{\statevec U}_{\Omega_u,N}^2,\quad \vnorm{\statevec V(c)}^2 \le C\Delta x^{-1}N^2\inorm{\statevec V}_{\Omega_v,N}^2,
\end{equation}
so
\begin{equation}
\begin{split}
 \frac{d}{dt} \left\{ \inorm{\statevec U}^2_{\Omega_u,N} + \frac{d}{dt}\inorm{\statevec V}^2_{\Omega_v,N} \right\}
 \le C\Delta x^{-1}N^2 &\left\{  \eta\rho(\mmatrix A^+)\inorm{\statevec U}_{\Omega_u,N}^2+  (1-\eta)\rho(\mmatrix A^+)\inorm{\statevec V}_{\Omega_v,N}^2 \right.
 \\&\left. +  \eta\rho(|\mmatrix A^-|)\inorm{\statevec U}_{\Omega_u,N}^2 + (1-\eta)\rho(|\mmatrix A^-|)\inorm{\statevec V}_{\Omega_v,N}^2 \right\}.
 \end{split}
\end{equation}
Since $\rho(\mmatrix A^\pm)\le \rho(A)$,
\begin{equation}
 \frac{d}{dt} \left\{ \inorm{\statevec U}^2_{\Omega_u,N} + \frac{d}{dt}\inorm{\statevec V}^2_{\Omega_v,N} \right\}
 \le \Delta x^{-1}N^2\rho(\mmatrix A)\left\{ \eta\inorm{\statevec U}^2_{\Omega_u,N} + (1-\eta)\inorm{\statevec V}^2_{\Omega_v,N} \right\}
\end{equation}
and since $\eta\in(0,1)$,
\begin{equation}
 \frac{d}{dt} \left\{ \inorm{\statevec U}^2_{\Omega_u,N} + \frac{d}{dt}\inorm{\statevec V}^2_{\Omega_v,N} \right\}
 \le C\Delta x^{-1}N^2\rho(\mmatrix A)\left\{ \inorm{\statevec U}^2_{\Omega_u,N} + \inorm{\statevec V}^2_{\Omega_v,N} \right\},
\end{equation}
which leads to the same bound as for the characteristic approximation \eqref{eq:FinalCharacteristicBound}
\begin{equation}
\inorm{\statevec U(T)}^2_{\Omega_u,N} + \inorm{\statevec V(T)}^2_{\Omega_v,N} \le \left\{\inorm{\statevecGreek \omega_0}^2_{\Omega_u,N} + \inorm{\statevecGreek \omega_0}^2_{\Omega_v,N}\right\} e^{C\Delta x^{-1}N^2 \rho\left(\mmatrix A \right)T},
\label{eq:FinalPenaltyBound}
\end{equation}
but with different dissipation terms.

We can derive sufficient conditions for $\mmatrix{$\Sigma$}_v$ that satisfy \eqref{eq:1DSigmaMatrixConditionsWi} using the splitting $\mmatrix A = \mmatrix A^+ -|\mmatrix A^-|$,
\begin{equation}
2 \mmatrix{$\Sigma$}_v - \beta\mmatrix A^+ + \beta|\mmatrix A^-|\ge 0,
\end{equation}
which is satisfied if $\mmatrix{$\Sigma$}_v\ge  \frac{1}{2} \mmatrix A^+ > \frac{\beta}{2} \mmatrix A^+$, since $0< \beta < 1$. Then by the second relation in \eqref{eq:1DSigmaMatrixConditionsWi},
\begin{equation}
\beta \mmatrix A^+ - \beta |\mmatrix A^-| + \mmatrix{$\Sigma$}_u = \mmatrix{$\Sigma$}_v \ge \frac{\beta}{2} \mmatrix A^+,
\end{equation}
so
\begin{equation}
\frac{\beta}{2} \mmatrix A^+ -|\mmatrix A^-| + \mmatrix{$\Sigma$}_u \ge 0,
\end{equation}
which is guaranteed if
\begin{equation}
 \mmatrix{$\Sigma$}_u\ge \max\left(|\mmatrix A^-| - \frac{\beta}{2} \mmatrix A^+,\mmatrix 0\right).
 \label{eq:SigmaUBounds}
 \end{equation}

Therefore we have proved:
\begin{thm}
The DGSEM Approximations \eqref{eq:DGSEMChimeraU} and \eqref{eq:DGSEMChimeraV} to the overset grid problem \eqref{eq:WeakFormsonFullDomains} satisfy the bound
\begin{equation}
\inorm{\statevec U(T)}^2_{\Omega_u,N} + \inorm{\statevec V(T)}^2_{\Omega_v,N} \le \left\{\inorm{\statevecGreek \omega_0}^2_{\Omega_u,N} + \inorm{\statevecGreek \omega_0}^2_{\Omega_v,N}\right\} e^{C\Delta x^{-1}N^2 T\rho\left(\mmatrix A\right)},
\label{eq:FinalChimeraCharacteristicBound}
\end{equation}
if
\begin{equation}
\mmatrix{$\Sigma$}_v\ge \frac{1}{2} \mmatrix A^+,\quad \mmatrix{$\Sigma$}_u\ge |\mmatrix A^-|.
\label{eq:SigmaDGSEMConditionsP}
\end{equation}
\end{thm}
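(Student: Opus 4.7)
The plan is to apply the discrete energy method element by element and then combine the two subdomain estimates. First I would set $\statevecGreek\phi \leftarrow \statevec U$ in \eqref{eq:DGSEMChimeraU} and $\statevecGreek\varphi \leftarrow \statevec V$ in \eqref{eq:DGSEMChimeraV}, use the SBP identity \eqref{eqSBPOnVolume} to convert each discrete volume term $\iprod{\statevecGreek\phi,\mmatrix A\statevec U_\xi}_N$ into a boundary contribution, and sum over all elements in $\Omega_u$ and $\Omega_v$. Interior element interfaces contribute the dissipative jump terms from \eqref{eq:RiemannInterfaceBound} that may be discarded when forming an upper bound, and the physical boundaries contribute either zero (at $x=a$, since $\statevec g_L = \statevec 0$) or a nonpositive dissipative term (at $x=d$, since $\statevec g_R = \statevec 0$). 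What remains is precisely \eqref{eq:CombinedEnergyShort}, with the penalty contributions $\mathcal P_b$ and $\mathcal P_c$ localized at the overlap boundaries $x=b$ and $x=c$.

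The core of the argument is to recast $\mathcal P_b$ and $\mathcal P_c$ as a positive semidefinite quadratic form plus a controllable residual. Following the preceding algebra, I would add and subtract the $\eta$- and $(1-\eta)$-weighted self-energy terms $\eta\statevec U^T\mmatrix A\statevec U$ and $(1-\eta)\statevec V^T\mmatrix A\statevec V$ at $x=b$ (with the complementary split at $x=c$) so that each $\mathcal P$ decomposes as $[\statevec U;\statevec V]^T\mmatrix M_{b,c}[\statevec U;\statevec V] + Q_{b,c}$. The positivity $\mmatrix M_{b,c}\ge 0$ then follows from the matrix conditions \eqref{eq:1DSigmaMatrixConditionsWi} established in \cite{KOPRIVA2022110732}, and the quadratic forms can be dropped from the bound. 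This reorganization is the principal obstacle: the added terms must reproduce exactly the continuous Overset Domain Norm structure so that the analytical argument carries over to the discrete setting.

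Next I would verify that the sufficient hypotheses \eqref{eq:SigmaDGSEMConditionsP} imply \eqref{eq:1DSigmaMatrixConditionsWi}. Writing $\mmatrix A = \mmatrix A^+ - |\mmatrix A^-|$, the assumption $\mmatrix{$\Sigma$}_v \ge \tfrac{1}{2}\mmatrix A^+$ gives $2\mmatrix{$\Sigma$}_v - \beta\mmatrix A = 2\mmatrix{$\Sigma$}_v - \beta\mmatrix A^+ + \beta|\mmatrix A^-| \ge (1-\beta)\mmatrix A^+ + \beta|\mmatrix A^-| \ge 0$ for $\beta\in(0,1)$, which is \eqref{eq:1DSigmaMatrixConditionsW3i}. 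The consistency relation \eqref{eq:1DSigmaMatrixConditionsW2i} then fixes $\mmatrix{$\Sigma$}_u = \mmatrix{$\Sigma$}_v - \beta\mmatrix A$, whose required positivity is covered uniformly in $\beta\in(0,1)$ by \eqref{eq:SigmaUBounds}; the stronger theorem hypothesis $\mmatrix{$\Sigma$}_u\ge|\mmatrix A^-|$ dominates the worst case.

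Finally, I would estimate the residual $Q_{b,c}$. Splitting $\mmatrix A = \mmatrix A^+ - |\mmatrix A^-|$, discarding the nonpositive pieces, and bounding each quadratic form $\statevec U^T\mmatrix A^\pm\statevec U$ by $\rho(\mmatrix A^\pm)\vnorm{\statevec U}^2$ reduces the task to pointwise control at $x=b$ and $x=c$. The discrete inverse inequality \eqref{eq:InfinityNormBound} then gives $\vnorm{\statevec U(b)}^2 \le C\Delta x^{-1}N^2\inorm{\statevec U}^2_{\Omega_u,N}$, and analogously for the other three boundary evaluations; combining this with $\rho(\mmatrix A^\pm)\le \rho(\mmatrix A)$ and $\eta\in(0,1)$ yields a differential inequality of the form $\tfrac{d}{dt}\mathcal E^2 \le C\Delta x^{-1}N^2\rho(\mmatrix A)\,\mathcal E^2$ for the combined energy $\mathcal E^2 = \inorm{\statevec U}^2_{\Omega_u,N} + \inorm{\statevec V}^2_{\Omega_v,N}$. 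Gronwall's inequality then delivers \eqref{eq:FinalChimeraCharacteristicBound}. Once $\mmatrix M_{b,c}\ge 0$ is secured from \eqref{eq:SigmaDGSEMConditionsP}, the remaining estimates closely parallel those of the characteristic case treated earlier in the section.
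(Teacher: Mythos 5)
Your proposal is correct and follows essentially the same route as the paper: discrete energy method with the SBP identity, the add-and-subtract reorganization of $\mathcal P_{b,c}$ into a positive semidefinite quadratic form $\mmatrix M_{b,c}$ plus a residual $Q_{b,c}$, verification that \eqref{eq:SigmaDGSEMConditionsP} implies \eqref{eq:1DSigmaMatrixConditionsWi}, and finally the inverse inequality plus Gronwall to absorb the residual. No substantive differences from the paper's argument.
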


For the penalty formulation, dissipation also comes from three sources. As before, there is inter-element and physical boundary point dissipation when the upwind numerical flux is used. Now, however, there is dissipation from the coupling terms at $x=b$ and $x=c$ that appear in both subdomains, \eqref{eq:WeakFormsonFullDomains}.
Analytically, those terms remove the growth term. Numerically they add dissipation to inhibit that growth. 

To illustrate the effects of the dissipation terms, we look at the eigenvalues (as derived in \ref{AppendixB}) for $N=5$ for one element each in the domains $\Omega_u = [0,2]$, $\Omega_v = [1.1,3.5]$, and three configurations:
\begin{enumerate}
\item The dissipation-free formulation using the central numerical flux, \ref{AppendixA}
\item The characteristic formulation, \eqref{eqUDGSEMWeakForm}, and
\item  The penalty formulation \eqref{eq:DGSEMChimeraU}-\eqref{eq:DGSEMChimeraV} with penalty matrices $\mmatrix{$\Sigma$}_v= \frac{1}{2} \mmatrix A^+,\quad \mmatrix{$\Sigma$}_u= |\mmatrix A^-|.
$
\end{enumerate}

Fig.~\ref{fig: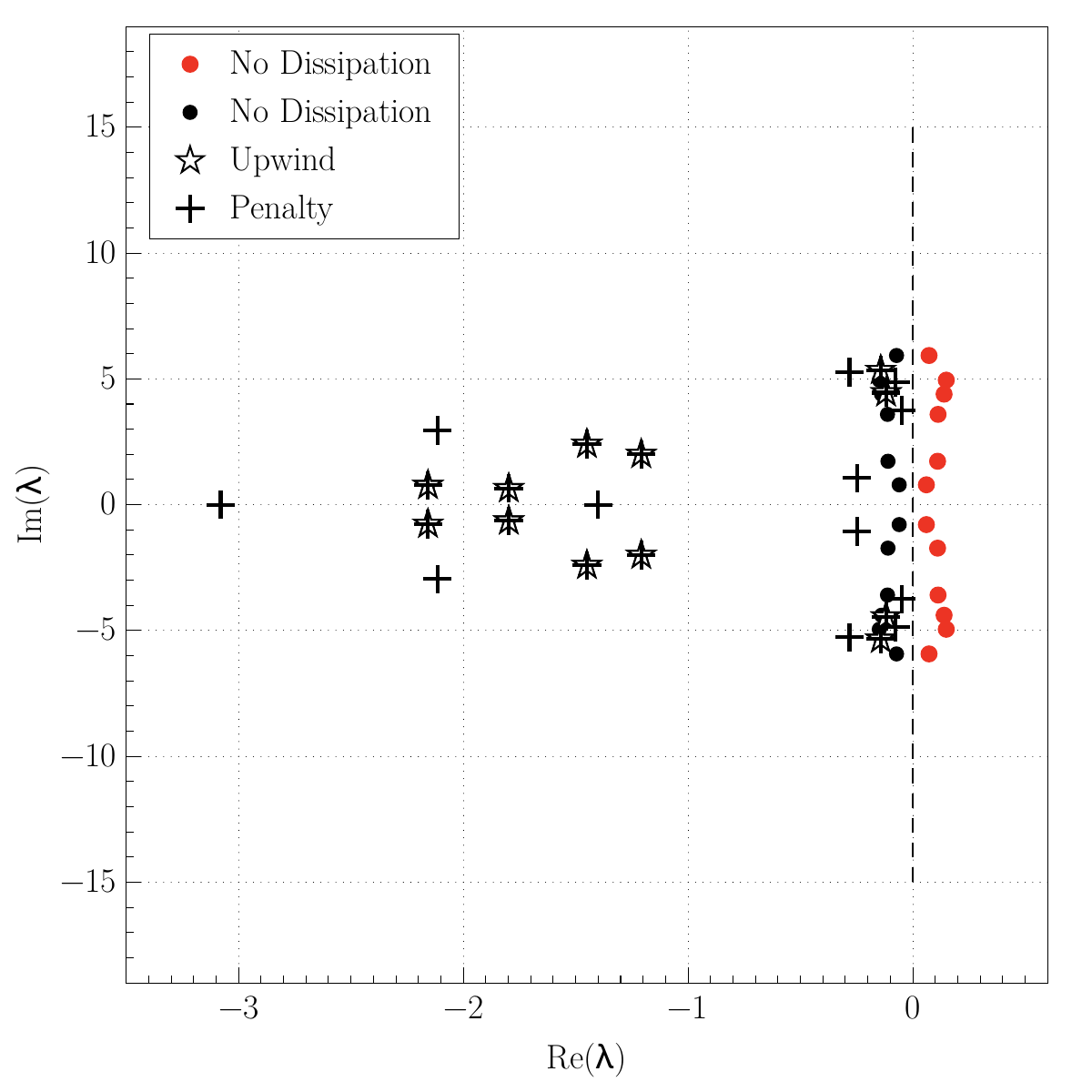} shows that the dissipation-free approximation has unstable eigenvalues, whereas the upwind numerical flux supplies enough dissipation to move all eigenvalues to the left half plane, as shown in \ref{AppendixB}. The additional dissipation due to the penalty terms moves the eigenvalues even more to the left. Note that the eigenvalues of the upwind scheme of Sec.~\ref{sec:CharacteristicScheme} are each {\it repeated}, so that it appears to have half as many on the plot. The penalty formulation breaks that repetition. Repeated eigenvalues introduce the possibility of growth in time.
\begin{figure}[htbp] 
   \centering
   \includegraphics[width=2.50in]{Various.pdf}
   \includegraphics[width=2.5in]{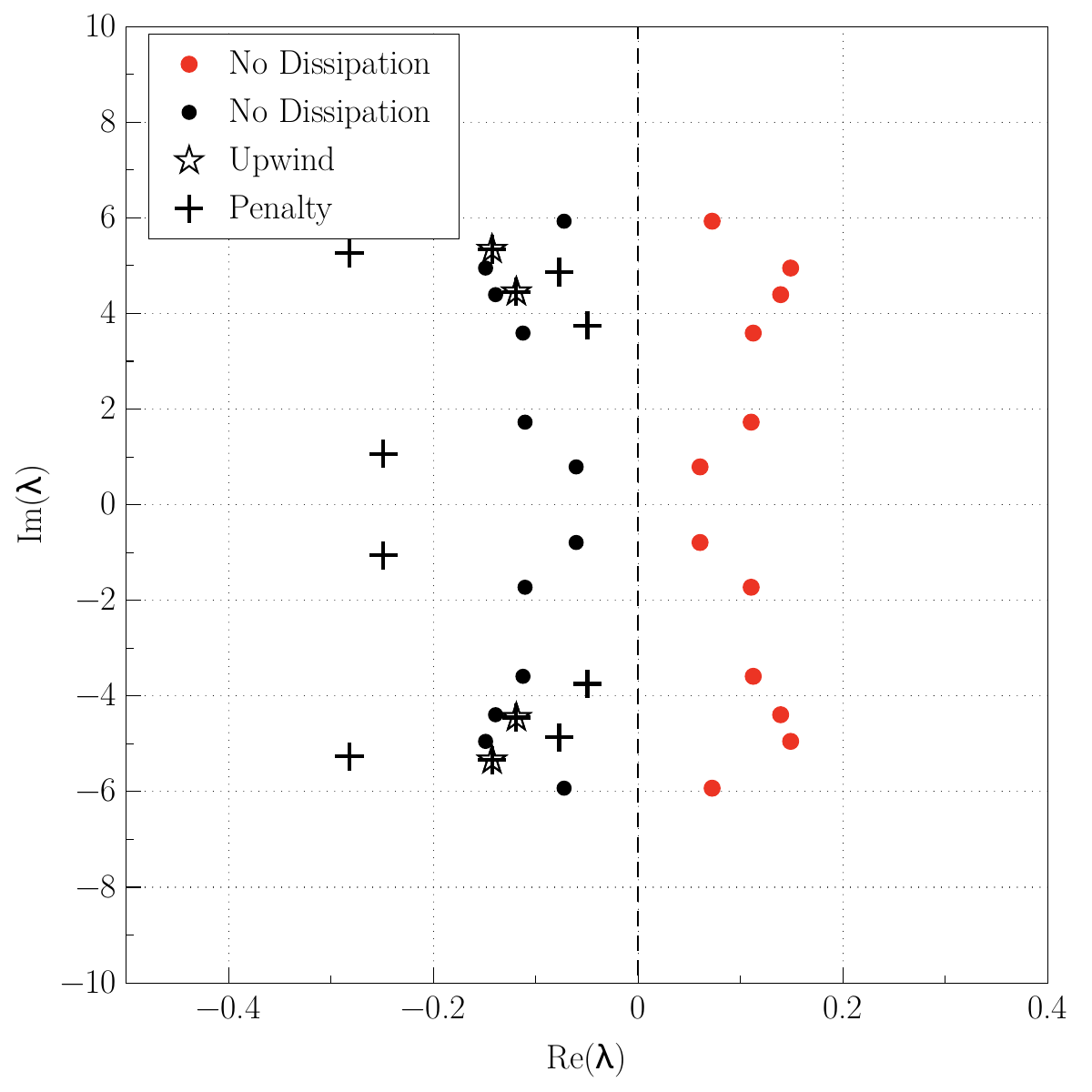}
   \caption{Eigenvalues for two elements for three approximations: No dissipation (\ref{AppendixA}), the upwind characteristic formulation \eqref{eqUDGSEMWeakForm}, and the penalty formulation \eqref{eq:DGSEMChimeraU}-\eqref{eq:DGSEMChimeraV}. The graph on the right enlarges the region near the imaginary axis. Unstable eigenvalues are drawn in red.}
   \label{fig:Various.pdf}
\end{figure}

\subsubsection{Enhancing Dissipation with Additional Coupling within the Overlap Region}

The penalty formulation in \cite{KOPRIVA2022110732} allows one to more tightly couple the solutions in the two domains by introducing penalty terms in the overlap region. In the weak form \eqref{eq:WeakFormsonFullDomains}, one adds penalties for an arbitrary number, $M$, of locations,
\begin{equation}
\begin{gathered}
 \iprod{\statevecGreek\phi,\statevec u_t}_{\Omega_u} + \iprod{\statevecGreek\phi,\mmatrix A u_x }_{\Omega_u}+
 \left.\statevecGreek\phi^T\ \mmatrix{$\Sigma$}_u^b (\statevec u-\statevec v)\right|_b+
 \left.\statevecGreek\phi^T\ \mmatrix{$\Sigma$}_u^c (\statevec u-\statevec v)\right|_c+\frac{1}{M}\sum_{m=1}^M\statevecGreek\phi^T\mmatrix{$\Sigma$}_u^m (\statevec u-\statevec v)= 0,\hfill\\
\iprod{\statevecGreek\varphi,\statevec v_t}_{\Omega_v} + \iprod{\statevecGreek\varphi,\mmatrix A \statevec v_x}_{\Omega_v} +  \left.\statevecGreek\varphi^T \mmatrix{$\Sigma$}_v^b (\statevec v-\statevec u)\right|_b+   \left.\statevecGreek\varphi^T \mmatrix{$\Sigma$}_v^c (\statevec v-\statevec u)\right|_c+\frac{1}{M}\sum_{m=1}^M\statevecGreek\varphi^T\mmatrix{$\Sigma$}_v^m (\statevec v-\statevec u)= 0.\hfill\\
\end{gathered}
\label{eq:WeakFormsFonFullDomainsWOvrlap}
\end{equation}
The overset domain problem remains well-posed and equivalent to the OP when the penalty matrices satisfy \cite{KOPRIVA2022110732}
\begin{equation}
(1-\eta)\mmatrix{$\Sigma$}^m_u = \eta\mmatrix{$\Sigma$}_v^m,
\label{eq:InteriorPenaltyCoupling}
\end{equation}
where, again, $\eta$ is the coupling parameter in the norm, \eqref{eq:EnergyNorm}. When added to the approximation \eqref{eq:DGSEMChimeraU}-\eqref{eq:DGSEMChimeraV}, the coupling terms add
\begin{equation}
\mathcal P = \frac{1}{M}\sum_{m=1}^M \mathcal P_m = \frac{1}{M}\sum_{m=1}^M \statevec U^T\mmatrix{$\Sigma$}^m_u(\statevec U - \statevec V) +\statevec V^T\mmatrix{$\Sigma$}^m_v(\statevec V - \statevec U)
\end{equation}
to the left side of the energy, \eqref{eq:CombinedEnergyShort}.
Each term in the sum contributes to the energy an amount of
\begin{equation}
\mathcal P_m = \statevec U^T\mmatrix{$\Sigma$}^m_u(\statevec U - \statevec V) +\statevec V^T\mmatrix{$\Sigma$}^m_v(\statevec V - \statevec U) =  \left[\begin{array}{c}\statevec U \\\statevec V\end{array}\right]^T\mmatrix M_m  \left[\begin{array}{c}\statevec U \\\statevec V\end{array}\right],
\end{equation}
where
\begin{equation}
\mmatrix M_m = \left[\begin{array}{cc} \mmatrix{$\Sigma$}^m_u& -\mmatrix{$\Sigma$}^m_u \\-\mmatrix{$\Sigma$}^m_v &\mmatrix{$\Sigma$}^m_v \end{array}\right]
\end{equation}
should be constructed so that $\mathcal P_m\ge 0$.

From \cite{KOPRIVA2022110732}, the conditions for $\mmatrix M_m \ge 0$ can be found by
rotating each matrix $\mmatrix M_m$ using
\begin{equation}
 \left[\begin{array}{c}\statevec U \\\statevec V\end{array}\right]
 = \oneHalf \left[\begin{array}{cc}I & I \\-I & I\end{array}\right] \left[\begin{array}{c}\statevec U - \statevec V \\\statevec U + \statevec V\end{array}\right] \equiv \mmatrix R \left[\begin{array}{c}\statevec U - \statevec V \\\statevec U + \statevec V\end{array}\right].
\end{equation}
Then
\begin{equation}
\mathcal P_m = \left[\begin{array}{c}\statevec U - \statevec V \\\statevec U +\statevec V\end{array}\right]^T\mmatrix R^T \mmatrix M_m \mmatrix R \left[\begin{array}{c}\statevec U - \statevec V \\\statevec U +\statevec V\end{array}\right] \equiv  \left[\begin{array}{c}\statevec U - \statevec V \\\statevec U +\statevec V\end{array}\right]^T\tilde{\mmatrix M}_m\left[\begin{array}{c}\statevec U - \statevec V \\\statevec U +\statevec V\end{array}\right].
\label{eq:MMatrixRotation}
\end{equation}
The rotated matrix,
\begin{equation}
\tilde{\mmatrix M}_m =\frac{1}{4} \left[\begin{array}{cc} 2(\mmatrix{$\Sigma$}^m_u+\mmatrix{$\Sigma$}^m_u)& 0 \\ 2(\mmatrix{$\Sigma$}^m_u-\mmatrix{$\Sigma$}^m_v) &0 \end{array}\right]
\end{equation}
is positive semi-definite if $\mmatrix{$\Sigma$}^m_u = \mmatrix{$\Sigma$}^m_v \ge 0$. Setting $\mmatrix{$\Sigma$}^m = \mmatrix{$\Sigma$}^m_u = \mmatrix{$\Sigma$}^m_v$, the lower left corner of $\tilde{\mmatrix M}_m$ is zeroed and the time rate of change of the energy is decreased by an amount
\begin{equation}
\mathcal P_m = (\statevec U - \statevec V)^T\mmatrix{$\Sigma$}^m (\statevec U - \statevec V)\ge 0.
\label{eq:OPDissipation}
\end{equation}
In fact,
\begin{equation}
\mathcal P_m = \statevec U^T\mmatrix{$\Sigma$}^m_u(\statevec U - \statevec V) +\statevec V^T\mmatrix{$\Sigma$}^m_v(\statevec V - \statevec U)=   \left(\statevec U^T\mmatrix{$\Sigma$}^m_u-\statevec V^T\mmatrix{$\Sigma$}^m_v\right) (\statevec U - \statevec V).
\end{equation}
So if we start by assuming that $\mmatrix{$\Sigma$} = \mmatrix{$\Sigma$}_u = \mmatrix{$\Sigma$}_v$, then we immediately have \eqref{eq:OPDissipation}.

Although one has a lot of flexibility when designing $\mmatrix{$\Sigma$}^m$, a simple choice is a diagonal matrix with one parameter, $\mmatrix{$\Sigma$}^m = \varepsilon\mmatrix I$. With that choice,
\begin{equation}
\mathcal P_m = \varepsilon |\statevec U - \statevec V|^2\ge 0.
\end{equation}
\begin{rem}
The amount of the overlap penalty dissipation, therefore, depends only on the amount by which the solutions in the two subdomains differ, just like dissipation from the numerical flux, \eqref{eq:RiemannInterfaceBound}. This point is important, because it says that the overall dissipation of the scheme can be adjusted at an arbitrary number of points in a way that does not affect the formal order of accuracy of the approximation, as opposed, for example, to adding a standard even derivative artificial viscosity term to each equation.
\end{rem}


We observe the effect of the overlap penalty terms by comparing the eigenvalues (\ref{AppendixB}) with and without those terms. As before, we use one element each in the two subdomains $\Omega_u = [0,2]$, $\Omega_v = [1.1,3.5]$, and polynomial order $N=5$. No dissipation is added except for the overlap penalty, where we choose $M=2$  and use the same penalty matrix $\mmatrix{$\Sigma$} = \varepsilon \mmatrix I >0$ at each point. In Fig.~\ref{fig:OverlapPenaltyEVs} we compare the eigenvalues of the original dissipation-free approximation ($\varepsilon=0$) to those one gets when two overlap penalties are applied. The eigenvalues shift to the left as the penalty parameter increases, and for the two non-zero values presented, all are stable. However unstable eigenvalues are still present for $\varepsilon=0.5$.

\begin{figure}[htbp] 
   \centering
   \includegraphics[width=3.0in]{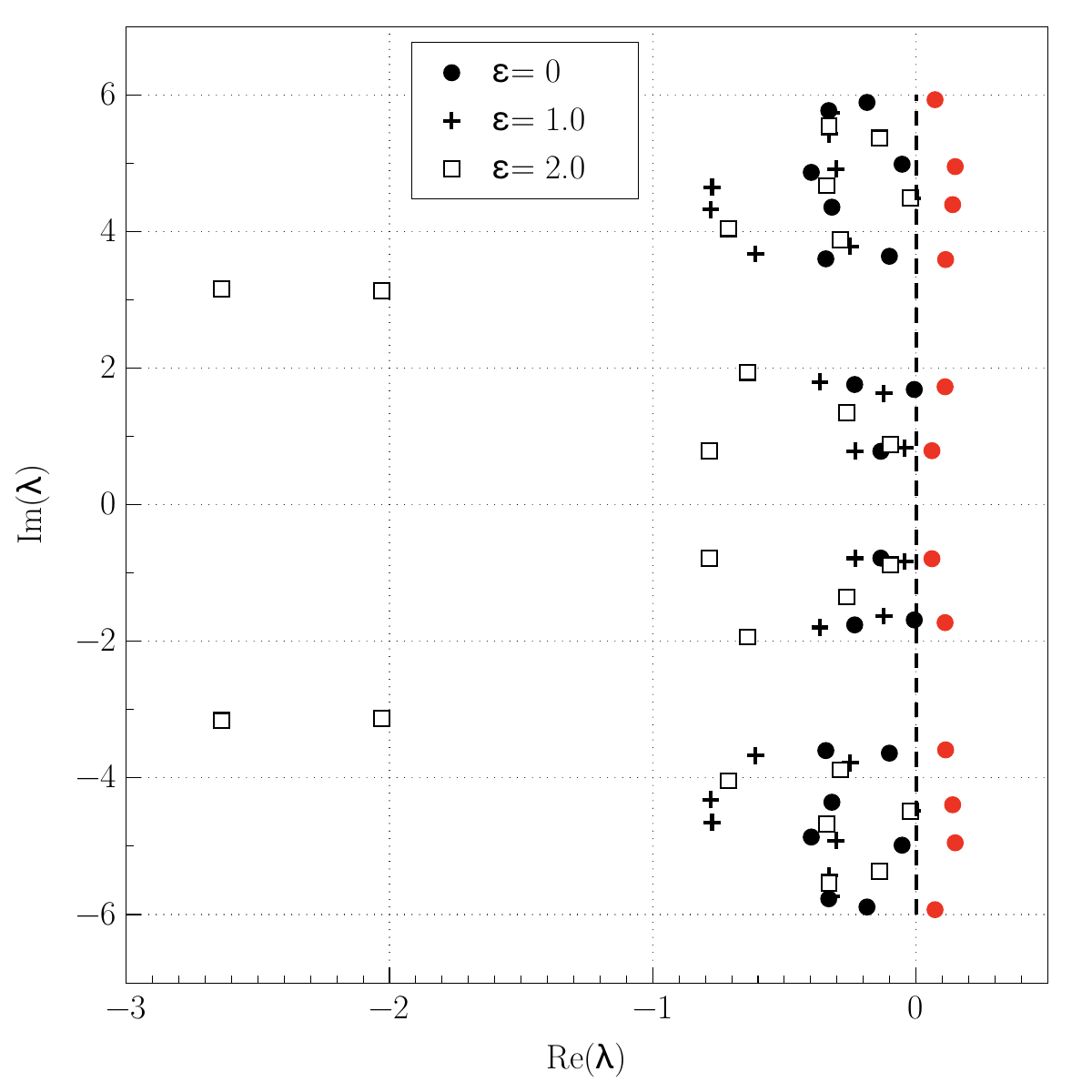}
   \caption{Eigenvalues for the penalty formulation \eqref{eq:DGSEMChimeraU}-\eqref{eq:DGSEMChimeraV} ($\varepsilon=0$) compared to those for the penalty formulation plus two overlap penalty points, \eqref{eq:WeakFormsFonFullDomainsWOvrlap}. Unstable eigenvalues are drawn in red.}
   \label{fig:OverlapPenaltyEVs}
\end{figure}

\section{Implementation of the Penalty Formulation}\label{sec:Implementation}

The DGSEM uses the numerical flux at the element interfaces, and can be easily modified to implement the penalty formulation. One form is to use \eqref{eq:DGSEMChimeraU}-\eqref{eq:DGSEMChimeraV}, where one replaces the numerical flux and adds the penalty terms. An algebraically equivalent form that is simpler to implement substitutes for the numerical flux $\hat {\statevec F}$ and converts the pure penalty formulation into the standard DGSEM plus new penalty terms,
\begin{equation}
\begin{split}
 \iprod{\mathcal J\statevecGreek\phi,\statevec U_t}_N
 &+ \left.\statevecGreek\phi^T\left\{{\statevec F^*}-\statevec F\right\}\right|^1_{-1} +\iprod{\statevecGreek\phi,\mmatrix A \statevec U_\xi }_{N }
 \\&+
 \mathcal I^k_u(b)  \left.\statevecGreek\phi^T\ \mmatrix{$\Sigma$}_u^b (\statevec U-\statevec V)\right|_b+
 \mathcal I^k_u(c)  \left.\statevecGreek\phi^T\left(\mmatrix{$\Sigma$}_u^c (\statevec U-\statevec V) +\statevec F(\statevec U) - \statevec F^*\right)\right|_c= 0
 \end{split}
\end{equation}
and
\begin{equation}
\begin{split}
\iprod{\mathcal J\statevecGreek\varphi,\statevec V_t}_{N }
&+\left.\statevecGreek\varphi^T\left\{{\statevec F^*}-\statevec F\right\}\right|^1_{-1}
+ \iprod{\statevecGreek\varphi,\mmatrix A \statevec V_\xi}_{N }
\\&+  \mathcal I^k_v(b)  \left.\statevecGreek\varphi^T \left( \mmatrix{$\Sigma$}_v^b (\statevec V-\statevec U) +\statevec F^* - \statevec F(\statevec V)\right)\right|_b
+   \mathcal I^k_v(c)  \left.\statevecGreek\varphi^T \mmatrix{$\Sigma$}_v^c (\statevec V-\statevec U)\right|_c= 0.
 \end{split}
\end{equation}

See \cite{Kopriva:2009nx},\cite{Winters2021} for how to convert these weak forms to a system of ODEs in time for the nodal values of the solutions. In this paper, the system of ODEs is approximately integrated in time using a third order Runge-Kutta method \cite{Williamson:1980:JCP80}, with time steps chosen so that the temporal error is small compared to the spatial error.

Some special choices for the penalty matrices can narrow the options and simplify the implementation further, as seen in the following examples:
\begin{ex}
If one chooses $\mmatrix{$\Sigma$}_{v} =  \mmatrix A^+$, $\mmatrix{$\Sigma$}_{u} =  |\mmatrix A^-|$, which satisfy the conditions \eqref{eq:SigmaDGSEMConditionsP}, and the upwind numerical flux, \eqref{eq:UpwindNumericalFlux}, for $\statevec F^*$, then
\begin{equation}
\begin{split}
\mmatrix{$\Sigma$}_u^c (\statevec U-\statevec V) +\statevec F(\statevec U) - \statevec F^* &= |\mmatrix A^-|(\statevec U-\statevec V) + \mmatrix A^+\statevec U + \mmatrix A^-\statevec U - \left( \mmatrix A^+ \statevec U + \mmatrix A^- \statevec V\right)
\\&
= 0.
\end{split}
\end{equation}
Similarly,
\begin{equation}
\begin{split}
\mmatrix{$\Sigma$}_v^b (\statevec V-\statevec U) +\statevec F^* - \statevec F(\statevec V) &=  \mmatrix A^+(\statevec V-\statevec U) + \mmatrix A^+\statevec U + \mmatrix A^-\statevec V -\mmatrix A^+\statevec V - A^-\statevec V
\\&
= 0.
\end{split}
\end{equation}
Then the DGSEM approximation of the penalty formulation \eqref{eq:WeakFormsonFullDomains} is equivalent to the standard DGSEM plus a dissipative penalty applied only to the overlap interface point, i.e.,
\begin{equation}
 \iprod{\mathcal J\statevecGreek\phi,\statevec U_t}_N
 + \left.\statevecGreek\phi^T\left\{{\statevec F^*}-\statevec F\right\}\right|^1_{-1} +\iprod{\statevecGreek\phi,\mmatrix A \statevec U_\xi }_{N }
+
 \mathcal I^k_u(b)  \left.\statevecGreek\phi^T |\mmatrix A^-| (\statevec U-\statevec V)\right|_b
= 0
\end{equation}
and
\begin{equation}
\begin{split}
\iprod{\mathcal J\statevecGreek\varphi,\statevec V_t}_{N }
&+\left.\statevecGreek\varphi^T\left\{{\statevec F^*}-\statevec F\right\}\right|^1_{-1}
+ \iprod{\statevecGreek\varphi,\mmatrix A \statevec V_\xi}_{N }
+   \mathcal I^k_v(c)  \left.\statevecGreek\varphi^T\mmatrix A^+ (\statevec V-\statevec U)\right|_c= 0.
 \end{split}
\end{equation}
\end{ex}
\begin{rem}
Using the final result of \ref{AppendixB}, the choice of penalty matrices in Example 1 is sufficient to stabilize the eigenvalues of the system in one space dimension.
\end{rem}
\begin{ex}
One can adjust the coupling and the amount of dissipation of Example 1 by adjusting the size of the penalty matrices. Let $\gamma_u\ge 1$, $\gamma_v\ge \oneHalf$ and let
\begin{equation}
 \mmatrix{$\Sigma$}_{u} =  \gamma_u |\mmatrix A^-|,\quad \mmatrix{$\Sigma$}_{v} =  \gamma_v \mmatrix A^+.
\end{equation}
Then the DGSEM of the penalty formulation is equivalent to
\begin{equation}
\begin{split}
 \iprod{\mathcal J\statevecGreek\phi,\statevec U_t}_N
 &+ \left.\statevecGreek\phi^T\left\{{\statevec F^*}-\statevec F\right\}\right|^1_{-1} +\iprod{\statevecGreek\phi,\mmatrix A \statevec U_\xi }_{N }
 \\&+
 \mathcal I^k_u(b)  \left.\gamma_u\statevecGreek\phi^T |\mmatrix A^-| (\statevec U-\statevec V)\right|_b+
 \mathcal I^k_u(c)  \left.(\gamma_u-1)\statevecGreek\phi^T |\mmatrix A^-|(\statevec U-\statevec V)\right|_c= 0
 \end{split}
 \label{eq:gammaVersionU}
\end{equation}
and
\begin{equation}
\begin{split}
\iprod{\mathcal J\statevecGreek\varphi,\statevec V_t}_{N }
&+\left.\statevecGreek\varphi^T\left\{{\statevec F^*}-\statevec F\right\}\right|^1_{-1}
+ \iprod{\statevecGreek\varphi,\mmatrix A \statevec V_\xi}_{N }
\\&+  \mathcal I^k_v(b)  \left. (\gamma_v-1) \statevecGreek\varphi^T\mmatrix A^+ (\statevec V-\statevec U)\right|_b
+   \mathcal I^k_v(c)  \left. \gamma_v \statevecGreek\varphi^T\mmatrix A^+ (\statevec V-\statevec U)\right|_c= 0.
 \end{split}
 \label{eq:gammaVersionV}
\end{equation}
\end{ex}

Modification of an existing DGSEM implementation requires two additions. The first is to find the reference space locations of the points $b$ and $c$ for interpolation from the donor domain by search and rootfinding. The second is to add the penalty terms in those elements where the penalty is active ($\mathcal I^k = 1$).

\section{Computed Examples}
We provide examples of the approximations \eqref{eq:gammaVersionU}-\eqref{eq:gammaVersionV} of Sec.~\ref{sec:Implementation} to the penalty formulation \eqref{eq:WeakFormsonFullDomains} of the symmetric hyperbolic system from Sec.~8.1.6 of \cite{Kopriva:2009nx},
\begin{equation}
\left[ {\begin{array}{*{20}c}
   \omega_1  \\
   \omega_2  \\
\end{array}} \right]_t  + \left[ {\begin{array}{*{20}c}
   0 & 1  \\
   1 & 0  \\
\end{array}} \right]\left[ {\begin{array}{*{20}c}
   \omega_1  \\
   \omega_2  \\
\end{array}} \right]_x  = \left[ {\begin{array}{*{20}c}
   \omega_1  \\
   \omega_2  \\
\end{array}} \right]_t  + \left[ {\begin{array}{*{20}c}
   \omega_2  \\
   \omega_1  \\
\end{array}} \right]_x  = 0\quad x \in \left[ {a,d} \right].
\label{eq:1DWaveEquationSystem}
\end{equation}
We approximate the system with a periodic solution in space and time, but with external states specified by the analytic solution, so that the resolution requirements are uniform over the domain. The choice of non-periodic boundary conditions means that there is dissipation and addition of energy at the left and right physical boundaries. Having the elements the same size and at the same polynomial order, any differences should be due to the choice of penalty matrices.

For the exact periodic solution we choose
\begin{equation}
\begin{gathered}
\omega_1 = \cos(k(x-t)) + \sin(k(x+t))\hfill\\
\omega_2 = \cos(k(x-t)) - \sin(k(x+t))\hfill\\
\end{gathered}
\label{eq:ExactPeriodicSolution}
\end{equation}
with $k=4$.
The subdomain boundaries are given in Table \ref{tab:DomainBoundaries1D},
where $o$ is an offset to ensure that the subdomain boundaries fall within an element. 
\begin{table}[!htp]
\caption{Subdomain Boundary Locations}
\begin{center}
\label{tab:DomainBoundaries1D}
\begin{tabular}{cccc}a & b & c & d \\$0 +o$ & 3 & 5 + $o$ & 8\end{tabular}
\end{center}
\end{table}
(With element boundaries matching, the overlap interfaces become essentially the standard DGSEM.)

 As an example, exact and computed solutions are shown in Fig.~\ref{fig:SinusoidSoln} for time $T=25.0$ for $\mmatrix{$\Sigma$}_v= \oneHalf \mmatrix A^+$ and $ \mmatrix{$\Sigma$}_u= |\mmatrix A^-|$, i.e. $\gamma_v = \oneHalf$, $\gamma_u = 1$, which minimally satisfy \eqref{eq:SigmaDGSEMConditionsP}, and $o=0.25$. Unless otherwise noted, all results will be presented for this final time. Six equally sized elements divide each of the two domains.
\begin{figure}[htbp] 
   \centering
   \includegraphics[width=4in]{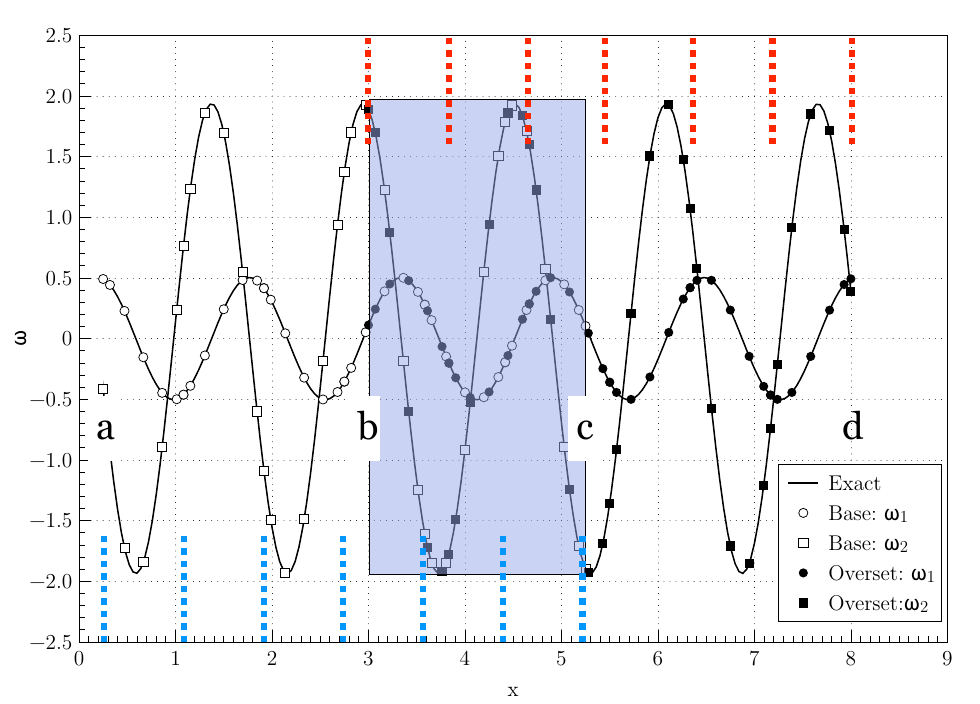}
   \caption{Exact and computed solutions for the sinusoidal problem, \eqref{eq:ExactPeriodicSolution}. Vertical dashed lines mark the element boundaries. The shaded area marks the overlap region.}
   \label{fig:SinusoidSoln}
\end{figure}

\subsection{Convergence with Equal Resolution}

The solutions with $\mmatrix{$\Sigma$}_v= \oneHalf \mmatrix A^+$ and $ \mmatrix{$\Sigma$}_u= |\mmatrix A^-|$ shown in Fig.~\ref{fig:SinusoidSoln} for six equally sized elements in each subdomain, are spectrally accurate. Fig.~\ref{fig:ErrorConvergence1,0.5} shows exponential convergence of the discrete $L_2$ errors $\inorm{\statevec U(T)-\statevecGreek \omega(T)}^2_{\Omega_u,N}$ and $\inorm{\statevec V(T) - \statevecGreek \omega(T)}^2_{\Omega_v,N}$ for $N \in[4,12]$. Along with the overset grid solutions, we have plotted the {\it optimal} error, which is what is computed when the overset grid coupling is removed and the exact solutions are used as the external states at $x=b$ and $x=c$. With this choice of penalty matrices, the overset domain is less accurate than the base. The base solution matches the optimal accuracy to graphical resolution.

\begin{figure}[htbp] 
   \centering
   \includegraphics[width=3.0in]{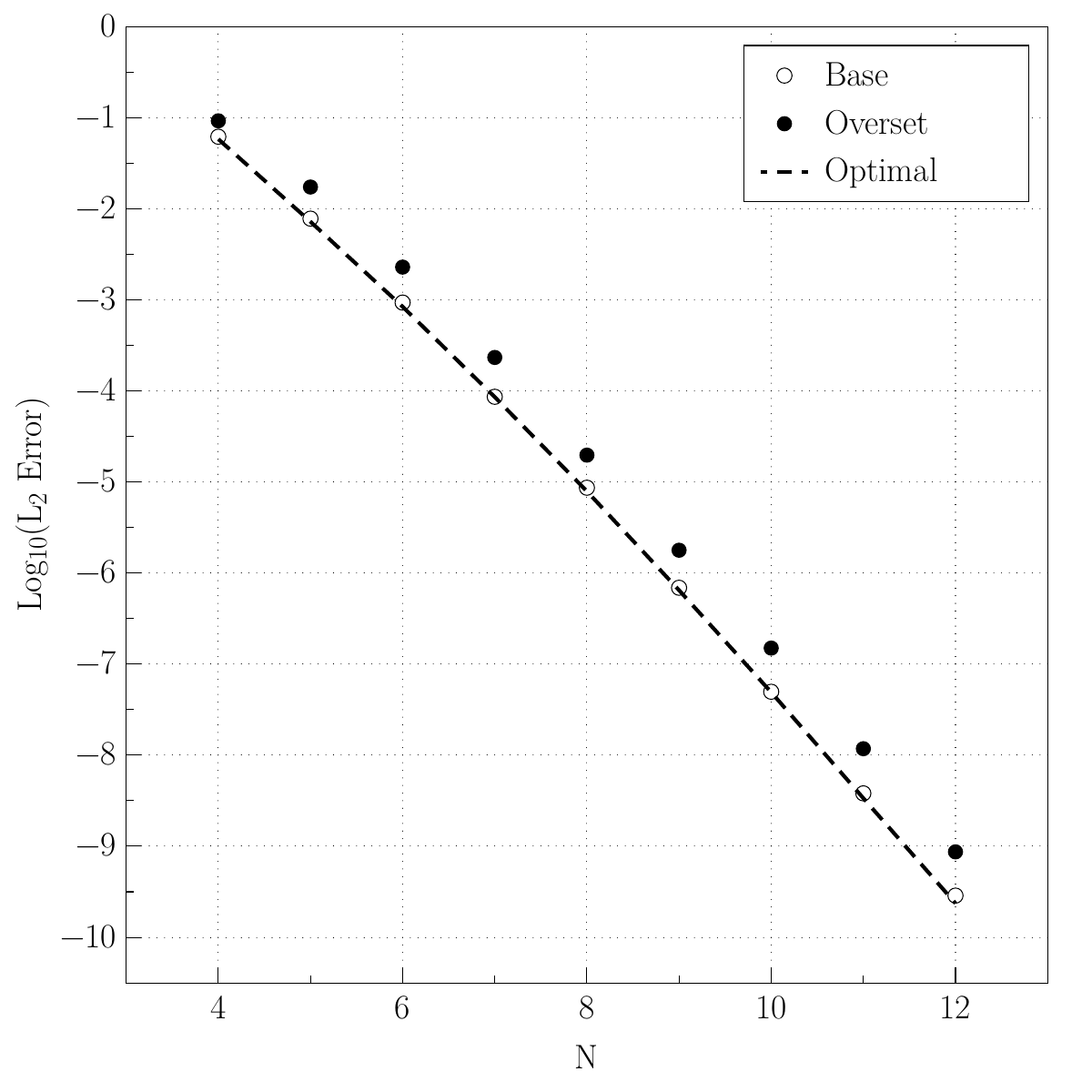}
   \caption{Error as a function of polynomial order.  $\mmatrix{$\Sigma$}_v= \oneHalf \mmatrix A^+$ and $ \mmatrix{$\Sigma$}_u= |\mmatrix A^-|$.}
   \label{fig:ErrorConvergence1,0.5}
\end{figure}


The parameters $\gamma_u$ and $\gamma_v$ can be varied to modify the error. For instance, Fig.~\ref{fig:ErrorConvergence1,1} shows that the errors match the optimal errors to graphical accuracy when the factors $\gamma_u = \gamma_v = 1$.
\begin{figure}[htbp] 
   \centering
   \includegraphics[width=3in]{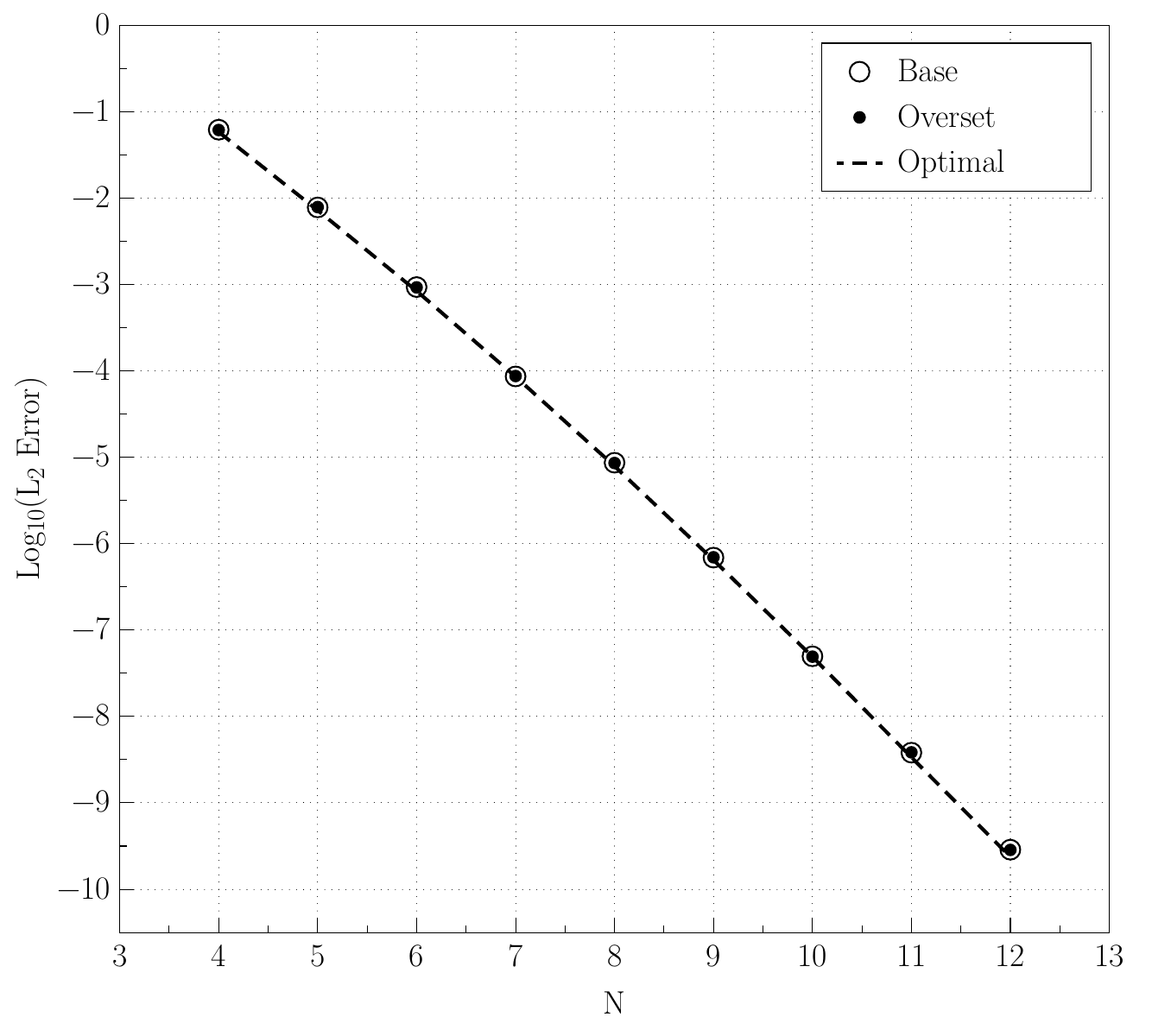}
   \caption{Same as Fig.~\ref{fig:ErrorConvergence1,0.5} but with $\mmatrix{$\Sigma$}_v=  \mmatrix A^+$ and $ \mmatrix{$\Sigma$}_u= |\mmatrix A^-|$.}
   \label{fig:ErrorConvergence1,1}
\end{figure}

In fact, for penalty matrices that satisfy \eqref{eq:SigmaDGSEMConditionsP}, the choice $\gamma_u = \gamma_v = 1$ is close to optimal. Fig.~\ref{fig:ErrorWithGammav} shows the total error $\left(\inorm{\statevec U(T)-\statevecGreek \omega(T)}^2_{\Omega_u,N}+\inorm{\statevec V(T) - \statevecGreek \omega(T)}^2\right)^\oneHalf$ for $\gamma_u=1$ as a function of $\gamma_v$, which has a minimum error that occurs around $\gamma_v = 0.9$. The global minimum occurs near (0.9,0.9), which is outside of the lower bounds \eqref{eq:SigmaDGSEMConditionsP} but not \eqref{eq:SigmaUBounds}. Near the optimal, the curve is fairly flat, and from $\gamma_v = 0.9$ to $\gamma_v = 1.0$ the total error varies by only $1.2\%$, making $\mmatrix{$\Sigma$}_v=  \mmatrix A^+$ and $ \mmatrix{$\Sigma$}_u= |\mmatrix A^-|$ a reasonable choice.
\begin{figure}[htbp] 
   \centering
   \includegraphics[width=3in]{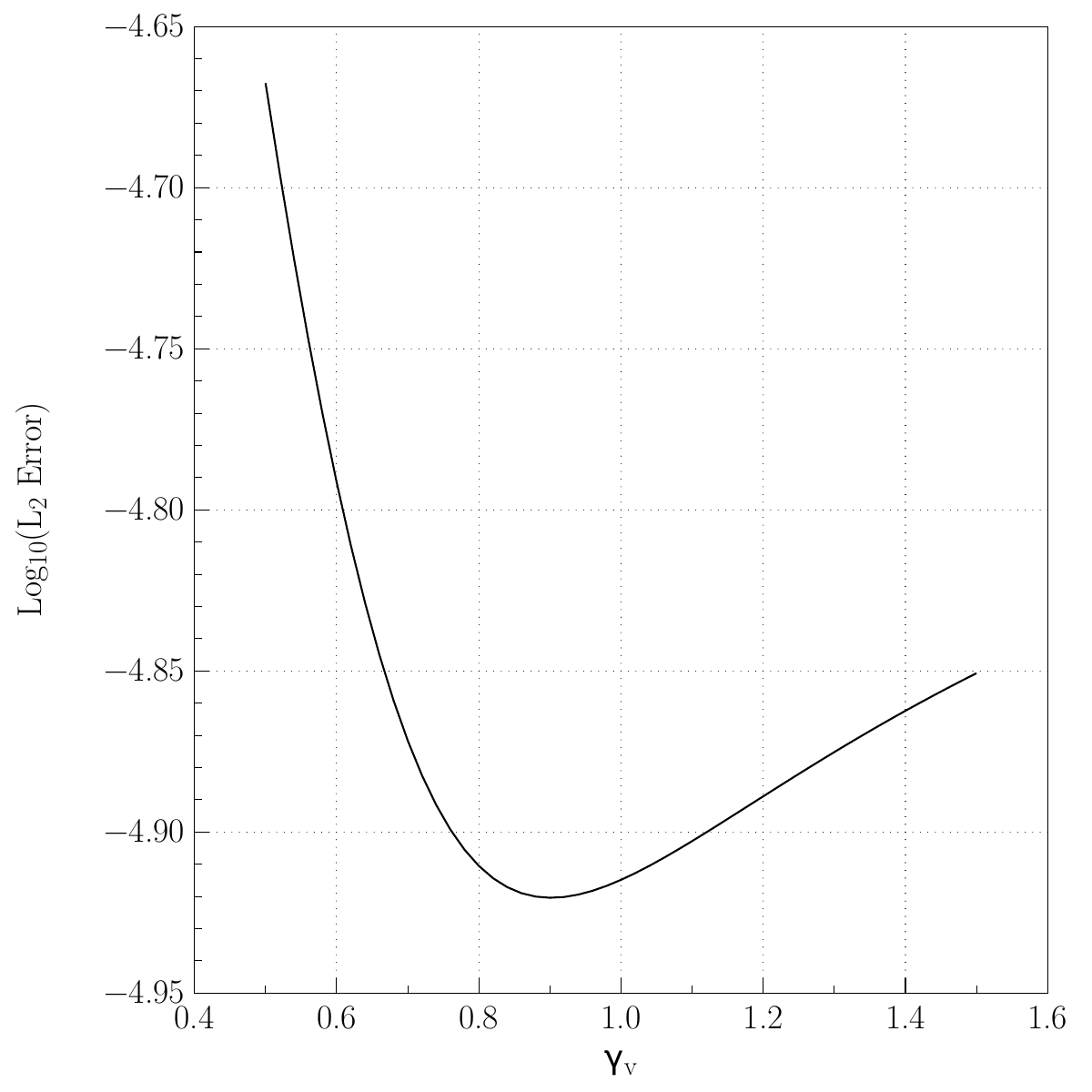}
   \caption{Total error as a function of $\gamma_v$.}
   \label{fig:ErrorWithGammav}
\end{figure}

\subsection{Convergence with Unequal Resolution}

One feature of overset grid methods is that the grids can have resolutions that differ within the overlap region. To assess the effects of different resolutions we use 12 elements in the base domain relative to the six in the overset domain. Fig.~\ref{fig:ErrorConvergenceNe=12,6} shows the error in each subdomain as a function of the polynomial order, plus the optimal error for each subdomain. The error in the less resolved subdomain remains near optimal, but it pollutes the better resolved one. Although there is a slight advantage observed in the base subdomain, the overset subdomain increases the error in the base subdomain by up to three orders of magnitude from the optimal.
The opposite behavior is observed if the number of elements is reversed.

\begin{figure}[htbp] 
   \centering
   \includegraphics[width=3in]{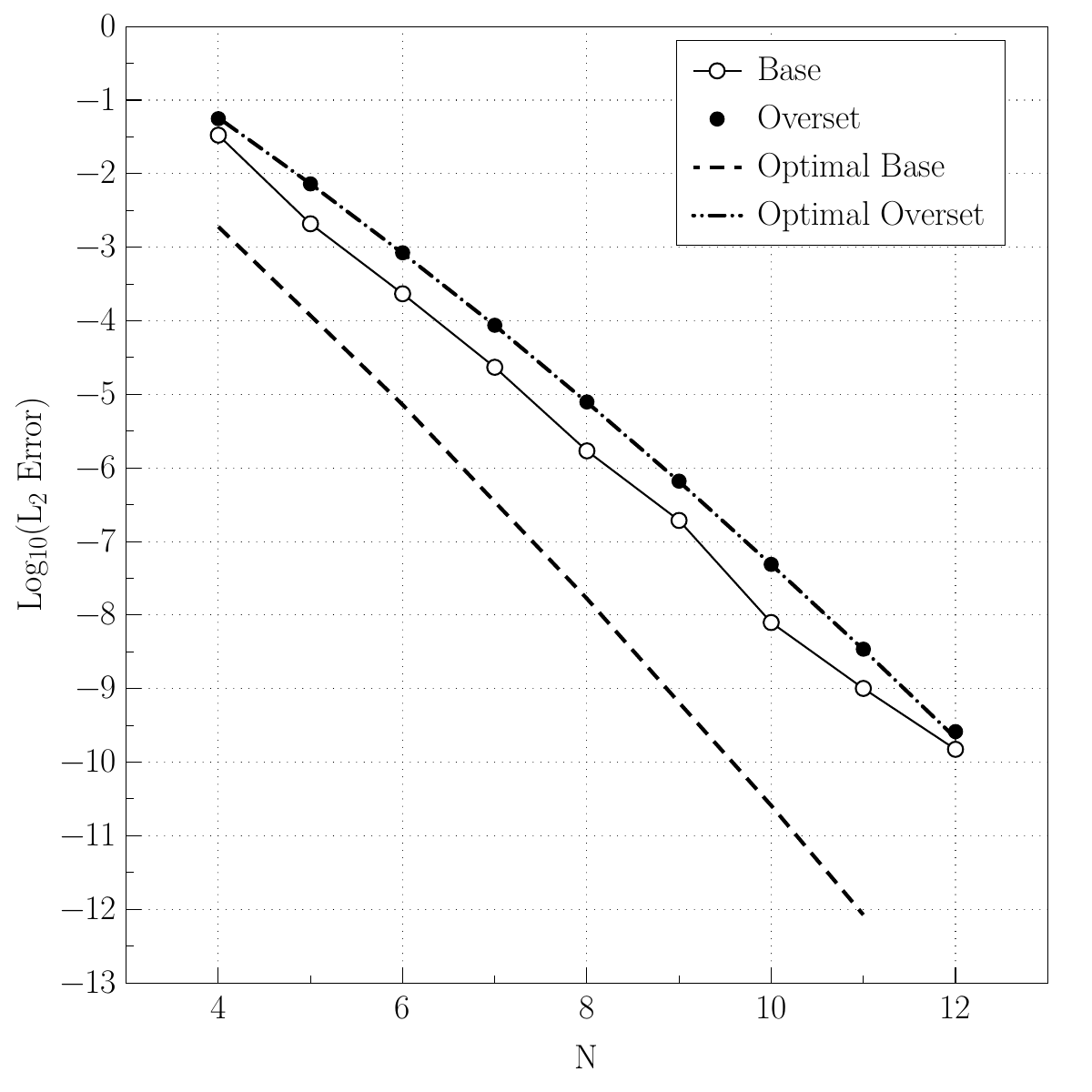}
   \caption{Error as a function of polynomial order where the base grid has twice as many elements as the overset.}
   \label{fig:ErrorConvergenceNe=12,6}
\end{figure}

\subsection{The Effect of Coupling within the Overlap Region}

The additional coupling \eqref{eq:WeakFormsFonFullDomainsWOvrlap} in the overlap region has the unique feature of adding dissipation proportional to the square of the difference of the solutions, \eqref{eq:OPDissipation} in that domain. The total amount of dissipation depends on the number of penalty points, $M$, and the amount of dissipation added at each point, $\varepsilon^m$. The parameters provide one the opportunity to tune the dissipation between the subdomains, though without much guidance.

As an example, we choose $\mmatrix{$\Sigma$}^m = \varepsilon \mmatrix I>0$ and $M=4$ for $N=8$ on a mesh where $N_u=12$ and $N_v=6$. Fig.~\ref{fig:EpsilonConvergence} shows the convergence for two values of $\varepsilon$ compared to no overlap penalty applied ($\varepsilon=0$). First, note that the convergence rate is not affected, as expected. Second, we see that the penalty parameter has little effect on the magnitude of the error at these high orders. One can optimize the error as a function of $\varepsilon$, as seen in Fig.~\ref{fig: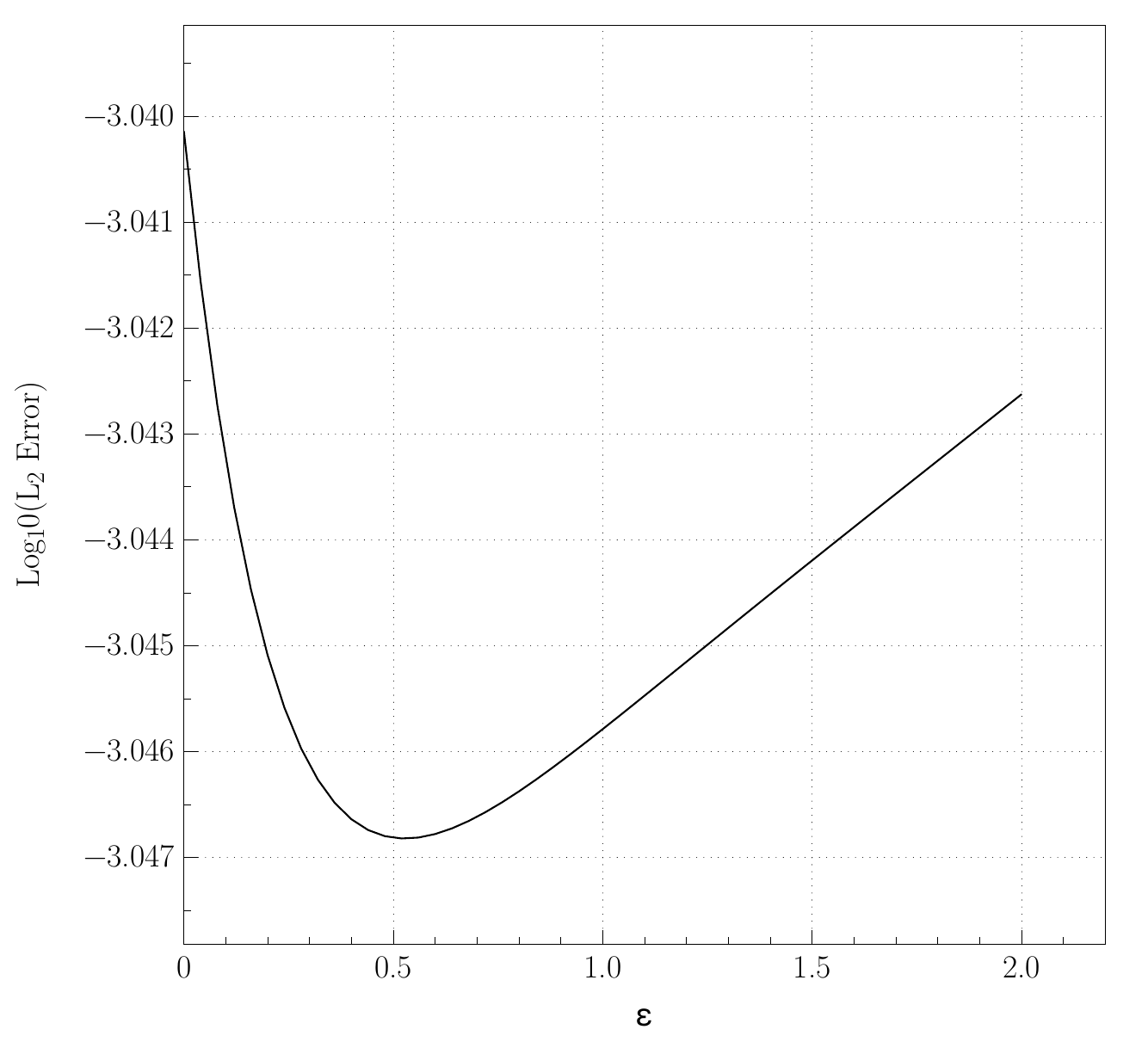}, where we vary $\varepsilon$ between zero and two. However, the exercise shows that the error is insensitive (with a variation of less than 1.5\%) to the overlap penalty terms. This conclusion is borne out over various polynomial orders. The results suggest that adding the overlap penalties serve to enhance dissipation and hence stability, but not accuracy, as was postulated in \cite{KOPRIVA2022110732}, the likely reason being that the coupling at the overlap boundaries is already affecting the accuracy in the same way, c.f. \eqref{eq:WeakFormsFonFullDomainsWOvrlap}.

\begin{figure}[htbp] 
   \centering
   \includegraphics[width=3in]{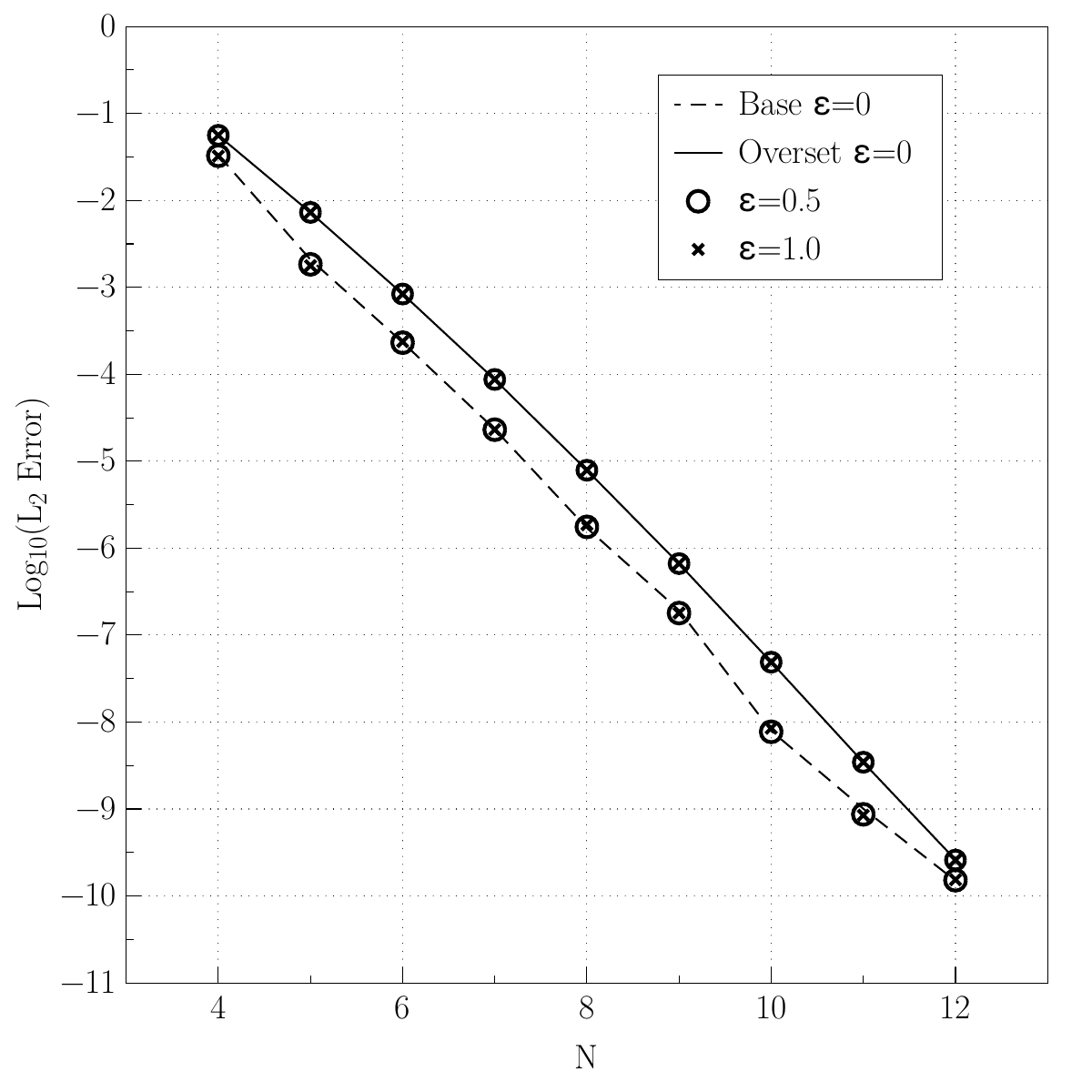}
   \caption{Convergence as a function of overlap penalty weight where the base grid has twice as many elements as the overset.}
   \label{fig:EpsilonConvergence}
\end{figure}
\begin{figure}[htbp] 
   \centering
   \includegraphics[width=3in]{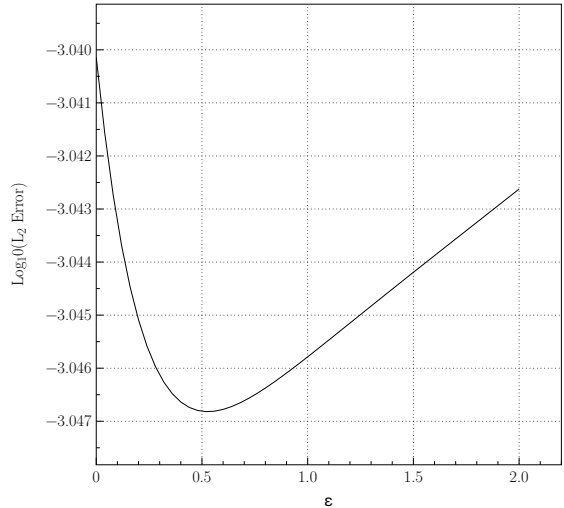}
   \caption{Error as a function of overlap penalty weight.}
   \label{fig:OverlapPenaltyError.pdf}
\end{figure}

\section{Summary and Discussion}

The analysis of stability of overset grid methods has been a challenge ever since the methods were first introduced. Here, we used the energy method and eigenvalue studies to examine the stability of characteristic approximations of the DGSEM for hyperbolic scalar equations and systems. The energy bounds show growth with time, polynomial order, and the inverse of the element size, and so they cannot be said to be energy stable, even though they are for the original problem. The same behavior was observed for the approximation of the well-posed penalty formulation developed in \cite{KOPRIVA2022110732}. On the other hand, for fixed polynomial order and mesh, the growth in time can be at most exponentially fast. \textcolor{black}{It was not possible to show stability of the approximation of the penalty formulation in the same way as showing energy boundedness of the PDE because the latter required the use of integration-by-parts over subintervals of the domain, whereas the DGSEM (as well as other SBP approximations) has the summation-by-parts property only over an entire element. This suggests that a global or elementwise SBP property is not enough to prove stability of the overset problem, but that if approximations that have a local SBP property can be found, those might be provably stable.}

The eigenvalue studies in \ref{AppendixB} show that for systems the characteristic overset grid coupling itself is destabilizing, allowing for positive growth eigenvalues in the discrete system. This differs from the scalar case, and shows the pitfalls of extrapolating scalar results. The presence of unstable eigenvalues is consistent with the growth terms found in the energy analysis. It follows that dissipation is needed to keep solutions bounded over long times, which has been known from practical experience. The well-posed penalty formulation has four sources of dissipation when upwinding is used in the numerical flux: Dissipation due to physical boundary conditions, which allows energy to leave the physical domain, inter-element dissipation that is a characteristic of discontinuous Galerkin methods, a dissipative penalty at the ends of the overlap domain, and, finally, dissipative penalties applied at arbitrary points within the overlap. All these dissipative terms depend on the jumps in the solutions between elements, the boundary conditions, and between the solutions of the base and overset domains.

The overlap penalty dissipation introduced in \cite{KOPRIVA2022110732} is novel, and could be applied to any overset grid method. It produces dissipation that does not degrade the rate of convergence, but it can be used as a stabilizing factor.

\section*{Acknowledgments}

 This work was supported by grants from the Simons Foundation (\#426393, \#961988, David Kopriva).  Andrew Winters were supported by Vetenskapsr{\aa}det, Sweden (award no.~2020-03642 VR). Jan Nordstr\"{o}m was supported by Vetenskapsr{\aa}det, Sweden (award no.~2021-05484 VR) and the University of Johannesburg.

\section*{CRediT authorship contribution statement}

\textbf{David A. Kopriva}: 
Conceptualization, Data curation, Investigation, Methodology,
Software, Validation, Visualization, Writing --- original draft.
\textbf{Andrew R. Winters}:
Conceptualization, Methodology,
Writing --- original draft.
\textbf{Jan Nordstr\"{o}m}:
Conceptualization, Methodology,
Writing --- original draft.

\section*{Declaration of competing interest}

The authors declare that they have no known competing financial
interests or personal relationships that could have appeared to
influence the work reported in this paper.

\section*{Data availability}


Data will be made available upon request.

\appendix
\section{Central Numerical Flux Bounds}\label{AppendixA}

The approximation with the central numerical flux,
\begin{equation}
\statevec F^* = \avg{\statevec F},
\label{eq:CentralNumFlux2}
\end{equation}
differs from the upwind flux, $\avg{\statevec F} - \oneHalf |\mmatrix A|\jump{\statevec U}$, by missing the jump term. The central flux produces the neutral state,  from which we can isolate the behavior of the overset grid coupling from the stabilizing effect of the dissipation produced by the upwind flux.

The energy of a DGSEM approximation in a domain is governed by
\begin{equation}
\oneHalf\frac{d}{dt} \sum_{k=1}^{K_u} \inorm{\statevec U}^2_{\underline{e}^k} + \statevec U^T \left.\left(\statevec F^* -\oneHalf \statevec F\right)\right|_{left}^{right} = \sum_{k=1}^{{K_u}-1}\left\{ \jump{\statevec U}^T\statevec F^* - \oneHalf \jump{\statevec U^T\statevec F}\right\}.
\label{eq:UDomainEnergyApp}
\end{equation}

Using the central flux at the element interfaces, then, the interface contributions on the right of \eqref{eq:UDomainEnergyApp} vanish and are non-dissipative. At the left boundary,
\begin{equation}
BTL = \statevec U^T \left(\statevec F^*(\statevec g_L,\statevec U) -\oneHalf \statevec F(\statevec U)\right) =  \oneHalf\statevec U^T \mmatrix A \statevec g_L .
\end{equation}
Similarly,
\begin{equation}
BTR = \statevec U^T \left(\statevec F^*(\statevec U, \statevec g_R) -\oneHalf \statevec F(\statevec U)\right) = \oneHalf\statevec U^T \mmatrix A \statevec g_R
\end{equation}


Then we start with the following result about the DGSEM on a single domain,
\begin{thm}
For the system of PDEs \eqref{eq:PDESystem} for the normal, single domain problem, the DGSEM with the central numerical flux \eqref{eq:CentralNumFlux2} is dissipation free and neutrally stable.
\end{thm}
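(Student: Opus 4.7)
The plan is to start from the summed-over-elements energy identity \eqref{eq:UDomainEnergyApp} already derived in the appendix, specialize the numerical flux to the central choice $\statevec F^* = \avg{\statevec F}$, and show that every term on the right-hand side vanishes and that the boundary terms $BTL$ and $BTR$ also vanish under the homogeneous characteristic boundary conditions $\statevec g_L = \statevec g_R = \statevec 0$ assumed for the single-domain problem. The conclusion $\frac{d}{dt}\sum_k \inorm{\statevec U}^2_{\underline{e}^k} = 0$ immediately gives both claims: no energy is lost (dissipation free) and the energy is exactly conserved (neutrally stable).

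First I would handle the interior interface contributions. Using the identity $\jump{\statevec U^T\statevec F} = 2\jump{\statevec U}^T\avg{\statevec F}$ that holds because $\mmatrix A$ is symmetric and constant (already used in \eqref{eq:RiemannInterfaceBound}), the bracketed quantity on the right of \eqref{eq:UDomainEnergyApp} becomes
\begin{equation}
\jump{\statevec U}^T\avg{\statevec F} - \oneHalf\cdot 2\jump{\statevec U}^T\avg{\statevec F} = 0,
\end{equation}
so the central flux produces no inter-element dissipation. This is the sense in which the scheme is dissipation free at the interior interfaces.

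Next, I would use the boundary evaluations $BTL = \oneHalf \statevec U^T\mmatrix A\statevec g_L$ and $BTR = \oneHalf\statevec U^T\mmatrix A\statevec g_R$ already computed just above the theorem statement. For the single-domain problem the external states are $\statevec g_L = \statevec g_R = \statevec 0$, so both boundary contributions vanish identically. Combining with the vanishing of the interior sum leaves
\begin{equation}
\oneHalf\frac{d}{dt}\sum_{k=1}^{K_u}\inorm{\statevec U}^2_{\underline{e}^k} = 0,
\end{equation}
i.e.\ $\inorm{\statevec U(T)}^2_{\Omega,N} = \inorm{\statevecGreek\omega_0}^2_{\Omega,N}$ for all $T$, which is exactly the assertion of neutral stability and the absence of dissipation.

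There is essentially no obstacle: the proof is a direct assembly of identities already stated in the paper. The only point requiring care is being explicit that $\mmatrix A$ being constant and symmetric (standing assumptions of Section~2.2) is what makes the jump identity produce an exact cancellation rather than a residual term, and that the theorem's hypotheses include the homogeneous characteristic physical boundary data that kills $BTL$ and $BTR$. With these two observations noted, the chain of equalities runs through without any inequality being used, which is what distinguishes this result from the upwind case and justifies the word ``neutrally.''
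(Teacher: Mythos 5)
Your proof is correct and follows the same route as the paper: the paper's own proof simply notes that $BTL=BTR=0$ when $\statevec g_L=\statevec g_R=\statevec 0$ and reads off $\frac{d}{dt}\inorm{\statevec U}^2_{\Omega,N}=0$ from \eqref{eq:UDomainEnergyApp}, with the vanishing of the interior interface terms asserted in the surrounding text. You merely make explicit the jump-identity cancellation $\jump{\statevec U}^T\avg{\statevec F}-\oneHalf\jump{\statevec U^T\statevec F}=0$ that the paper leaves implicit, which is a welcome but not substantively different addition.
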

\begin{proof}
Both $BTR=0$ and $BTL=0$ when $\statevec g_L = \statevec g_R = \statevec 0$. Then from \eqref{eq:UDomainEnergyApp},
\begin{equation}
\oneHalf\frac{d}{dt} \inorm{\statevec U}^2_{\Omega,N} = \oneHalf\frac{d}{dt} \sum_{k=1}^{K_u} \inorm{\statevec U}^2_{N,\underline{e}^k} = 0.
\label{eq:EnergyEstimateCentralMD}
\end{equation}
\end{proof}

On the other hand, when the external states are not zero (recalling that $\mmatrix A = \mmatrix A^T$),
\begin{equation}
\left(\statevec U - \mmatrix A\statevec g \right)^2 = |\statevec U|^2 + |\mmatrix A \statevec g|^2 - 2\statevec U^T \mmatrix A \statevec g\ge 0.
\end{equation}
Therefore,
\begin{equation}
BTL = \oneHalf \statevec U^T \mmatrix A \statevec g_L \le \frac{1}{4}\left\{ |\statevec U|^2 + |\mmatrix A \statevec g_L|^2\right\}
\end{equation}
and
\begin{equation}
BTR = \oneHalf \statevec U^T \mmatrix A \statevec g_R \le \frac{1}{4}\left\{ |\statevec U|^2 + |\mmatrix A \statevec g_R|^2\right\}
\end{equation}
Then we find that
\begin{thm}
The DGSEM is not stable with the central numerical flux when the boundary data are not zero.
\end{thm}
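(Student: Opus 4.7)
The plan is to push the energy argument of the previous theorem through with nonzero external data and show that no uniform bound results. First, I would observe that because the central flux makes the inter-element jump contributions on the right-hand side of \eqref{eq:UDomainEnergyApp} vanish (exactly as in the derivation leading to \eqref{eq:EnergyEstimateCentralMD}), the full energy balance reduces to
\[
\oneHalf \frac{d}{dt}\inorm{\statevec U}^2_{\Omega,N} = BTL - BTR,
\]
with $BTL$ and $BTR$ controlled only through the Young-inequality estimates just derived, namely $|BTL| \le \tfrac{1}{4}\bigl(|\statevec U(a)|^2 + |\mmatrix A \statevec g_L|^2\bigr)$ and $|BTR| \le \tfrac{1}{4}\bigl(|\statevec U(d)|^2 + |\mmatrix A \statevec g_R|^2\bigr)$.

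Next, I would emphasize what distinguishes the central-flux case from the upwind case \eqref{eq:BTLSystem}: no negative-definite boundary contribution is available to absorb the pointwise quantities $|\statevec U(a)|^2$ and $|\statevec U(d)|^2$ produced by Young's inequality. To close the estimate these must be bounded in terms of the volume norm, and the only tool available is the inverse inequality \eqref{eq:InfinityNormBound}, giving
\[
\frac{d}{dt}\inorm{\statevec U}^2_{\Omega,N} \le C\Delta x^{-1}N^2 \inorm{\statevec U}^2_{\Omega,N} + C\bigl(|\mmatrix A \statevec g_L|^2 + |\mmatrix A \statevec g_R|^2\bigr).
\]
Applying Gronwall produces an exponential factor $e^{C\Delta x^{-1}N^2 T}$ whose rate diverges under mesh refinement or polynomial enrichment. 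Since the scheme is simultaneously dissipation-free in the interior and on interfaces, no sharper bound can be extracted by energy arguments alone, so the energy method yields no estimate that is uniform in $N$, $\Delta x$, or $T$.

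The main obstacle is that the argument above establishes only the failure of the standard energy machinery to produce a stability bound, not that the scheme exhibits genuine unbounded growth. To fully secure the theorem I would supplement the estimate with an eigenvalue study in the spirit of \ref{AppendixB}: with $\statevec g_L$ or $\statevec g_R$ nonzero, the boundary contribution perturbs the semi-discrete operator by a term that is not skew-adjoint, so one expects eigenvalues to leave the imaginary axis into the right half-plane. Exhibiting even a single such growth mode, combined with the absence of a uniform energy bound, establishes non-stability in the required sense and closes the proof.
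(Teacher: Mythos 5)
Your main argument is essentially the paper's own proof: the central flux leaves only the boundary terms $2BTL-2BTR$ in the energy balance, Young's inequality introduces the pointwise values $|\statevec U|^2$ at the boundaries, the inverse inequality \eqref{eq:InfinityNormBound} converts these to $C\Delta x^{-1}N^2\inorm{\statevec U}^2_{\Omega_u,N}$, and Gronwall yields a bound with rate $e^{C\Delta x^{-1}N^2 T}$ that is not uniform in $N$ or $\Delta x$ --- which is exactly the sense in which the paper declares the scheme ``not stable'' (the growth rate depends on the solution, not only on the data). Your closing paragraph is therefore unnecessary for the theorem as the paper intends it, and its premise is also shaky: for \emph{fixed, prescribed} external states $\statevec g_L,\statevec g_R$ the boundary data enter the semi-discrete system only as an inhomogeneous forcing vector $\vec{\statevec g}$ and do not perturb the operator or its spectrum at all; it is only the solution-dependent overset coupling ($\statevec g_R=\statevec V(c)$, $\statevec g_L=\statevec U(b)$) that changes the eigenvalues, and that case is handled separately in \ref{AppendixB}.
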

\begin{proof}
\begin{equation}
\begin{split}
\frac{d}{dt}\inorm{\statevec U}^2_{\Omega_u,N} &= 2BTL- 2BTR \le \oneHalf \left\{ |\statevec U(x_L)|^2 + |\mmatrix A \statevec g_L|^2\right\} + \oneHalf\left\{ |\statevec U(x_R)|^2 + |\mmatrix A \statevec g_R|^2\right\}
\\&\le \inorm{\statevec U}_\infty^2 + \oneHalf\left\{  |\mmatrix A \statevec g_L|^2 +  |\mmatrix A \statevec g_R|^2\right\}.
\end{split}
\end{equation}
The time derivative of the energy is seen to depend on the solution when the central numerical flux is used, and not just on the data. That dependency is made dissipative and therefore removed from the bound when the dissipative terms from the upwind numerical flux are included, see \eqref{eq:BTLSystem}.

As before, we can bound the maximum norm of the solution, so that
\begin{equation}
\frac{d}{dt}\inorm{\statevec U}^2_{\Omega_u,N} \le C\Delta x^{-1}N^2\inorm{\statevec U}^2_{\Omega_u,N} + \oneHalf\left\{  |\mmatrix A \statevec g_L|^2 +  |\mmatrix A \statevec g_R|^2\right\}.
\end{equation}
Therefore,
\begin{equation}
\inorm{\statevec U}^2_{\Omega_u,N} \le e^{C\Delta x^{-1}N^2T}\inorm{\statevec \omega_0}^2_{\Omega_u,N} + \oneHalf\int_0^T e^{C\Delta x^{-1}N^2(t-s)} \left\{  |\mmatrix A \statevec g_L|^2 +  |\mmatrix A \statevec g_R|^2\right\}\,\mathrm{d}s.
\end{equation}
\end{proof}

The overset grid problem sets $\statevec g_L = 0$ and $\statevec g_R = \statevec V(c)$ for the base domain and  $\statevec g_R = 0$ and $\statevec g_L = \statevec U(b)$
for the overset domain. Therefore,
\begin{equation}
\begin{split}
\frac{d}{dt}\inorm{\statevec U}^2_{\Omega_u,N} &\le \oneHalf\statevec V(c)^T\mmatrix A^2\statevec V(c) + C\Delta x^{-1}N^2\inorm{\statevec U}^2_{\Omega_u,N}\le \oneHalf\rho^2(\mmatrix A)|{\statevec V(c)}|^2 +  CN^2\inorm{\statevec U}^2_{\Omega_u,N}
\\&\le C\Delta x^{-1}N^2\left\{\rho^2(\mmatrix A)\inorm{\statevec V}^2_{\Omega_v,N} + \inorm{\statevec U}^2_{\Omega_u,N}\right\}
\end{split}
\end{equation}
and
\begin{equation}
\frac{d}{dt} \inorm{\statevec V}^2_{\Omega_v,N}\le C\Delta x^{-1}N^2\left\{\rho^2(\mmatrix A)\inorm{\statevec U}^2_{\Omega_u,N} + \inorm{\statevec V}^2_{\Omega_v,N}\right\}.
\end{equation}
Adding the two together,
\begin{equation}
\frac{d}{dt} \left\{ \inorm{\statevec U}^2_{\Omega_u,N} + \inorm{\statevec V}^2_{\Omega_v,N} \right\}
\le C\Delta x^{-1}N^2(\rho^2(\mmatrix A)+1)\left\{\inorm{\statevec V}^2_{\Omega_v,N} + \inorm{\statevec U}^2_{\Omega_u,N} \right\},
\end{equation}
so
\begin{equation}
 \inorm{\statevec U}^2_{\Omega_u,N} + \inorm{\statevec V}^2_{\Omega_v,N} \le e^{C\Delta x^{-1}N^2\left(\rho^2\left(\mmatrix A\right)+1\right)T}\left\{ \inorm{\statevec \omega_0}^2_{\Omega_u,N} + \inorm{\statevec \omega_0}^2_{\Omega_v,N} \right\}
\end{equation}
Therefore we have proved
\begin{thm}
The overset grid problem is not stable in the combined energy norm, but is bounded by the initial data for fixed $N$, $\Delta x$, and $T$.
\end{thm}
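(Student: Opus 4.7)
The plan is to combine the per-domain energy estimate just established for the central flux with the inverse inequality, then close the coupled system via a Gr\"onwall-type bound on the summed energy. First I would apply the previous theorem to each subdomain separately, specializing the external states to the overset data: $\statevec g_L = \statevec 0$ and $\statevec g_R = \statevec V(c)$ on the base domain $\Omega_u$, and $\statevec g_L = \statevec U(b)$ and $\statevec g_R = \statevec 0$ on the overset domain $\Omega_v$. The physical-boundary contributions vanish (trivial data) and only the donor values at $b$ and $c$ remain on the right-hand side of each energy identity.

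Next I would convert those pointwise magnitudes into volume quantities. Using the inverse inequality \eqref{eq:InfinityNormBound}, I get $\vnorm{\statevec U(b)}^2 \le C\Delta x^{-1} N^2 \inorm{\statevec U}^2_{\Omega_u,N}$, and similarly for $\vnorm{\statevec V(c)}^2$; the matrix factor is absorbed via $\vnorm{\mmatrix A \statevec v}^2 \le \rho^2(\mmatrix A)\vnorm{\statevec v}^2$. At this stage I have two coupled differential inequalities, each bounding one subdomain's energy growth in terms of both subdomain norms.

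The key step is then to add the two inequalities. The combined energy $\mathcal E^2 = \inorm{\statevec U}^2_{\Omega_u,N} + \inorm{\statevec V}^2_{\Omega_v,N}$ satisfies a scalar Gr\"onwall inequality $\frac{d}{dt} \mathcal E^2 \le C\Delta x^{-1} N^2 (\rho^2(\mmatrix A)+1)\, \mathcal E^2$, which integrates to the claimed exponential bound in the initial data. The non-stability conclusion is immediate: the exponential rate diverges as $N\to\infty$ or $\Delta x\to 0$, so no estimate uniform in the discretization can be obtained, while for fixed $N$, $\Delta x$, and $T$ the bound by the initial data is explicit.

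The main obstacle is that the two subdomain estimates are genuinely coupled through their boundary terms, and the simple addition only closes because both cross terms enter with comparable constants after the inverse inequality; if the coupling were asymmetric one would need a matrix Gr\"onwall or comparison argument. The deeper issue is that the central flux provides no dissipation at all (in contrast to the upwind case, where the interface jump terms would produce a non-positive contribution $-\oneHalf\jump{\statevec U}^T|\mmatrix A|\jump{\statevec U}$), so there is nothing available to cancel or bound the growth term generated by the donor values; consequently boundedness at fixed discretization parameters is the strongest conclusion possible by this argument.
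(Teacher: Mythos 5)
Your proposal follows essentially the same route as the paper: specialize the external states to $\statevec g_L=\statevec 0$, $\statevec g_R=\statevec V(c)$ on $\Omega_u$ and $\statevec g_L=\statevec U(b)$, $\statevec g_R=\statevec 0$ on $\Omega_v$, bound the donor point values via the inverse inequality with the $\rho^2(\mmatrix A)$ factor, add the two differential inequalities, and integrate the resulting Gr\"onwall inequality to obtain the exponential bound with rate $C\Delta x^{-1}N^2(\rho^2(\mmatrix A)+1)$. This matches the paper's argument, including the concluding observation that the rate degenerates as $N\to\infty$ or $\Delta x\to 0$, so only boundedness at fixed $N$, $\Delta x$, $T$ follows.
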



\section{Matrix Structure of the Overset Grid System}\label{AppendixB}

Here, we examine the structure of the matrices one gets for the spatial terms when approximating the overset grid problems with the DGSEM, again using the central numerical flux to isolate the effects of the coupling from the dissipation introduced by the upwind flux. The point is to show in terms of the eigenvalues that the overset grid coupling itself is destabilizing, making dissipation necessary for a dynamically stable (fixed grid and $N$, as $t$ increases) approximation. A study of the eigenvalues for general SBP approximations can be found in \cite{Sharan:2016rz}. The derivations here are specific to the DGSEM, and provide alternate proofs to reach the same conclusions.

\begin{rem}
We refer to eigenvalues that have positive real parts as ``unstable". They are ``stable" otherwise.
\end{rem}
We start with the approximation of the scalar problem
\begin{equation}
u_t + u_x = 0, \quad x\in [-1,1].
\end{equation}
The DGSEM approximation is
\begin{equation}
\iprodN{U_t,\phi} + \phi(F^* - F)|_{-1}^{1} + \iprodN{\phi,U_x} = 0,
\end{equation}
where $F=U$ and $F^*$ is the numerical flux. Now, the LGL quadrature satisfies the summation-by-parts property,
\begin{equation}
\iprodN{\phi,U'} = \phi U|_{-1}^1 - \iprodN{\phi',U}.
\end{equation}
Therefore,
\begin{equation}
\iprodN{\phi,U'} = \oneHalf\left\{ \phi U|_{-1}^1 + \iprodN{\phi,U'}- \iprodN{\phi',U} \right\},
\end{equation}
which allows us to write the equivalent split form approximation
\begin{equation}
\iprodN{U_t,\phi} +\left. \phi\left( F^* - \oneHalf F\right)\right|_{-1}^{1}+\oneHalf\left\{\iprodN{\phi,U'}- \iprodN{\phi',U} \right\}=0.
\end{equation}
To select the nodal values we take $\phi = \ell_j$. Then (see \cite{Kopriva:2009nx})
\begin{equation}
w_j\dot U_j +\delta_{jN}\left(U^*(U_N,g_R) - \oneHalf U_N   \right) - \delta_{j0}\left(U^*(g_L,U_0) - \oneHalf U_0   \right) + \oneHalf\left\{ \sum_{n=0}^N U_n\left(w_jD_{jn} - w_nD_{nj} \right)\right\}=0
\label{eq:PtwiseScalarU}
\end{equation}
where $D_{jn} = \ell'_n(x_j)$ is the derivative matrix.

We discuss three specific cases for the scalar equation. The first is the single element problem. The second shows that one gets the same results for multiple elements. The final case is the overset grid problem for the scalar equation.
\\

{\bf Case 1. DGSEM on a Single Element.} The first situation is the use of the central numerical flux, $U^*(r,s) = \oneHalf ( r+s)$ on a single element. Then
\begin{equation}
w_j\dot U_j +\oneHalf \delta_{jN}g_R - \oneHalf\delta_{j0}g_L + \oneHalf\left\{ \sum_{n=0}^N U_n\left(w_jD_{jn} - w_nD_{nj} \right)\right\}=0.
\end{equation}
We can write the problem in matrix-vector form as
\begin{equation}
\mmatrix W_e\frac{d}{dt} \vec U =  \oneHalf\mmatrix S\vec U + \oneHalf \vec g,
\label{eq:SDMatrixEquation}
\end{equation}
where $S_{jn} = -\left(w_jD_{jn} - w_nD_{nj} \right) = -S_{nj}$ is a skew-symmetric matrix and $\mmatrix W_e = \mathrm{diag}(w_j) >0$.

Then we have,
\begin{thm}
The eigenvalues of $\mmatrix W_e^{-1}\mmatrix S$ lie on the imaginary axis.
\label{thm:EVsOfSingleDomain}
\end{thm}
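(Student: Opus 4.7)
The plan is to exploit the fact that $\mmatrix W_e$ is symmetric positive definite (the LGL weights $w_j$ are strictly positive) and $\mmatrix S$ is skew-symmetric by construction. The natural strategy is to show that $\mmatrix W_e^{-1}\mmatrix S$ is skew-adjoint with respect to the weighted inner product $\langle x,y\rangle_{\mmatrix W_e} = x^T \mmatrix W_e y$, which in turn means its eigenvalues must be purely imaginary.

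First I would note explicitly that $S_{jn} = -(w_j D_{jn} - w_n D_{nj})$ satisfies $S_{jn} = -S_{nj}$, so $\mmatrix S^T = -\mmatrix S$, and that $\mmatrix W_e = \mathrm{diag}(w_j) > 0$ admits a symmetric positive definite square root $\mmatrix W_e^{1/2}$. The key step is then to perform the similarity transformation
\begin{equation}
\mmatrix T \;\equiv\; \mmatrix W_e^{1/2}\bigl(\mmatrix W_e^{-1}\mmatrix S\bigr)\mmatrix W_e^{-1/2} \;=\; \mmatrix W_e^{-1/2}\,\mmatrix S\,\mmatrix W_e^{-1/2},
\end{equation}
which preserves the spectrum. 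A direct transpose shows
\begin{equation}
\mmatrix T^T \;=\; \mmatrix W_e^{-1/2}\,\mmatrix S^T\,\mmatrix W_e^{-1/2} \;=\; -\mmatrix W_e^{-1/2}\,\mmatrix S\,\mmatrix W_e^{-1/2} \;=\; -\mmatrix T,
\end{equation}
so $\mmatrix T$ is a real skew-symmetric matrix.

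From the spectral theorem for real skew-symmetric matrices, the eigenvalues of $\mmatrix T$ are either zero or come in pure imaginary conjugate pairs $\pm i\beta$. Since $\mmatrix W_e^{-1}\mmatrix S$ and $\mmatrix T$ are similar, they share the same eigenvalues, proving the claim. As a corroborating sanity check (not needed for the proof), I would observe that the homogeneous ODE $\mmatrix W_e \dot{\vec U} = \oneHalf \mmatrix S\vec U$ conserves the discrete energy $\vec U^T \mmatrix W_e \vec U$ because $\vec U^T \mmatrix S\vec U = 0$ for skew-symmetric $\mmatrix S$, which is exactly what purely imaginary eigenvalues predict.

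There is no substantial obstacle here; the argument is a short linear-algebra computation. The only subtlety worth flagging in the write-up is that ``imaginary axis'' includes the origin, since $\mmatrix S$ may be singular (indeed the constant mode lies in its kernel), so the result should be understood as $\mathrm{Re}(\lambda)=0$ for every eigenvalue $\lambda$, which is precisely the neutral stability property needed to match the continuous energy estimate \eqref{eq:EnergyEstimateCentralMD}.
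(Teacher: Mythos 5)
Your proof is correct and follows essentially the same route as the paper: both arguments conjugate by $\mmatrix W_e^{\pm 1/2}$ to obtain the skew-symmetric matrix $\mmatrix W_e^{-1/2}\mmatrix S\,\mmatrix W_e^{-1/2}$, which is similar to $\mmatrix W_e^{-1}\mmatrix S$ and therefore shares its purely imaginary spectrum. Your explicit remark that the spectrum may include zero (the kernel of $\mmatrix S$) is a harmless refinement the paper leaves implicit.
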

\begin{proof}
The matrix $\mmatrix S$ has purely imaginary eigenvalues by virtue of being skew-symmetric. Since $\mmatrix W_e > 0$, $\mmatrix W_e^{-1} = \sqrt{\mmatrix W_e^{-1}}\sqrt{\mmatrix W_e^{-1}}$. The matrix $\sqrt{\mmatrix W_e^{-1}}\sqrt{\mmatrix W_e^{-1}}\mmatrix S$ has the same eigenvalues as $\sqrt{\mmatrix W_e^{-1}}\mmatrix S\sqrt{\mmatrix W_e^{-1}}$, which is skew-symmetric because $\mmatrix S$ is skew symmetric. Therefore, the result holds.
\end{proof}

Fig.~\ref{fig: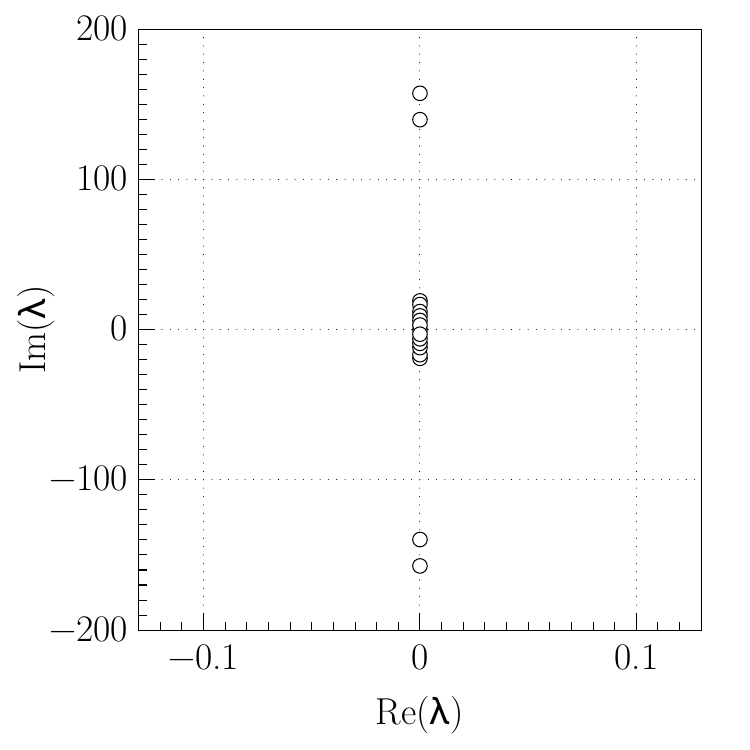} shows the eigenvalues for the DGSEM with a single element on the interval $[-1,1]$ of polynomial order $N=16$. The eigenvalues, as proved, are all along the imaginary line, with maximum positive real parts of $1.65\times 10^{-14}$, i.e., on the order of rounding error.\\
\begin{figure}[htbp] 
   \centering
   \includegraphics[width=3in]{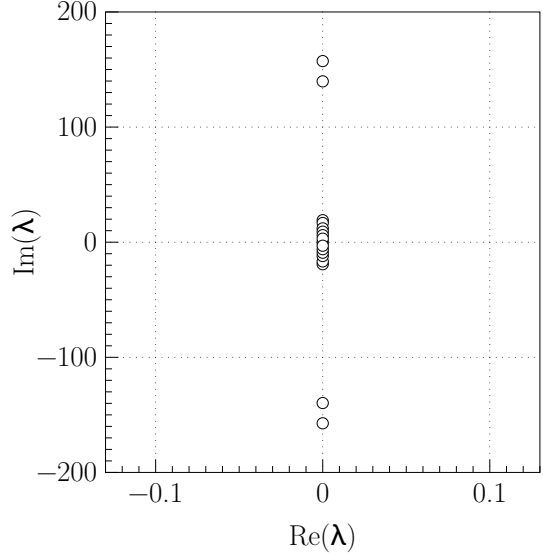}
   \caption{Eigenvalues of the DGSEM, $\mmatrix W_e^{-1}\mmatrix S$, with one element and the central numerical flux for $N=16$.}
   \label{fig:DGSEMCentrEvs.pdf}
\end{figure}


{\bf Case 2. Single Domain with Multiple Elements.}
The eigenvalues remain on the imaginary axis when the central flux is used at the boundaries and element interfaces, consistent with the energy estimate \eqref{eq:EnergyEstimateCentralMD}. With multiple elements, the matrix $\mmatrix S$ in the matrix system \eqref{eq:SDMatrixEquation} is replaced by the block tri-diagonal matrix
\begin{equation}
\mmatrix Q = \left[\begin{array}{ccccc}
\mmatrix S & \mmatrix C &  &  &  \\
-\mmatrix C^T & \mmatrix S & \mmatrix C &  &  \\ 
& \ddots & \ddots & \ddots &  \\ 
&  & -\mmatrix C^T & \mmatrix S & \mmatrix C\\
& & & -\mmatrix C^T & \mmatrix S \\
\end{array}\right],
\end{equation}
where
\begin{equation}
\mmatrix C = \left[\begin{array}{cccc}
0 & 0 & \dots & 0 \\\vdots &  &  & \vdots \\0 & 0 & \dots & 0
\\1 & 0 & \dots & 0\end{array}\right]
\end{equation}
is the coupling matrix.

For multiple elements, the matrix $\mmatrix Q$ is also skew-symmetric and therefore has purely imaginary eigenvalues.
It is therefore sufficient to consider only a single element per domain in the following example.\\

{\bf Case 3. Overset Grid with Two Elements.} The final scalar case is the overset grid problem. We consider here two elements only, with domains $\Omega_u = [-1,1]$ and $\Omega_v = [o,o+2]$, where $o\in(-1,1)$, so that no metrics are involved. In this case,
\begin{equation}
V(-1) = U(b) = \sum_{n=0}^N U_n\ell_n(b),
\end{equation}
where, as before, we do not explicitly include the transformation to reference space when writing the Lagrange interpolating polynomials.
Then
\begin{equation}
w_j\dot V_j = \oneHalf \delta_{j0}\sum_{n=0}^N U_n\ell_n(b) - \oneHalf \delta_{jN} g_R+\oneHalf \left(\mmatrix S\vec V\right)_j.
\end{equation}
In matrix-vector form, the system is
\begin{equation}
\mmatrix W \frac{d}{dt}\left[\begin{array}{c}\vec U \\\vec V\end{array}\right] = \oneHalf \left[\begin{array}{cc}\mmatrix S & \mmatrix 0 \\\mmatrix B & \mmatrix S\end{array}\right]\left[\begin{array}{c}\vec U \\\vec V\end{array}\right] + \vec g = \oneHalf\mmatrix Q\left[\begin{array}{c}\vec U \\\vec V\end{array}\right] + \vec g
\end{equation}
where
\begin{equation}
\mmatrix B = \left[\begin{array}{ccc}\ell_0(b) & \dots & \ell_N(b) \\0 & \dots & 0 \\\vdots & \ddots & 0 \\0 & \dots & 0\end{array}\right]
\end{equation}
is the matrix that couples the base grid to the overset grid.

For the scalar problem, the eigenvalues do not change when coupling two domains (Cf. \cite{Sharan:2016rz},\cite{SHARAN2018199}) as shown in
\begin{thm}
The DGSEM applied with the central numerical flux to the scalar overset grid problem has purely imaginary eigenvalues.
\end{thm}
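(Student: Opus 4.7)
The plan is to leverage the one-way nature of the scalar overset coupling. The base-grid equation for $\vec U$ does not depend on $\vec V$, since the external state at $x=a$ is the prescribed datum $g_L$ and not $\vec V$. Consequently the spatial operator
\[
\mmatrix Q = \left[\begin{array}{cc}\mmatrix S & \mmatrix 0 \\\mmatrix B & \mmatrix S\end{array}\right]
\]
is block lower triangular, with the coupling matrix $\mmatrix B$ appearing only in the lower-left block. Because $\mmatrix W = \mathrm{diag}(\mmatrix W_e,\mmatrix W_e)$ is block diagonal with $\mmatrix W_e>0$, the full evolution matrix $\oneHalf\mmatrix W^{-1}\mmatrix Q$ inherits the same block lower triangular structure, with $\oneHalf\mmatrix W_e^{-1}\mmatrix S$ on both diagonal blocks.

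The key algebraic fact is that the spectrum of a block triangular matrix equals the union, counted with multiplicity, of the spectra of its diagonal blocks. Applying this gives
\[
\sigma\!\left(\oneHalf\mmatrix W^{-1}\mmatrix Q\right) = \sigma\!\left(\oneHalf\mmatrix W_e^{-1}\mmatrix S\right),
\]
with every eigenvalue doubled in algebraic multiplicity. Theorem \ref{thm:EVsOfSingleDomain} already established that $\mmatrix W_e^{-1}\mmatrix S$ has purely imaginary spectrum via the skew-symmetric similarity $\sqrt{\mmatrix W_e^{-1}}\,\mmatrix S\,\sqrt{\mmatrix W_e^{-1}}$. The scaling factor $\oneHalf$ preserves the imaginary-axis property, so the conclusion follows at once.

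There is essentially no obstacle once the block triangular structure is identified; the coupled problem reduces immediately to the single-element result. The only subtlety worth flagging is that the doubled algebraic multiplicities may render $\mmatrix W^{-1}\mmatrix Q$ defective, which in principle permits polynomial-in-time growth of solutions even though every eigenvalue lies on the imaginary axis. However, the theorem statement concerns only the locations of the eigenvalues, so no examination of the Jordan structure is required. The result also stands in sharp contrast to the system case treated in the sequel, where the two-way coupling destroys the triangular structure and can introduce eigenvalues with positive real part.
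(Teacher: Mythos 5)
Your proposal is correct and follows essentially the same route as the paper: identify the block lower triangular structure of $\mmatrix W^{-1}\mmatrix Q$ arising from the one-way coupling, conclude that its spectrum is that of the repeated diagonal block $\mmatrix W_e^{-1}\mmatrix S$, and invoke Theorem~\ref{thm:EVsOfSingleDomain}. Your added remark on the doubled multiplicities possibly permitting polynomial-in-time growth is a point the paper also makes, just outside this proof.
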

\begin{proof}
Since $\mmatrix W$ is diagonal, the block matrix
\begin{equation}
\hat{\mmatrix Q} = \mmatrix W^{-1}\mmatrix Q =  \left[\begin{array}{cc}W_e^{-1}\mmatrix S & \mmatrix 0 \\\frac{1}{w_0}\mmatrix B & W_e^{-1}\mmatrix S\end{array}\right]\left[\begin{array}{c}\vec U \\\vec V\end{array}\right]
\end{equation}
is block triangular so its determinant is $\det(\hat Q) = \det(W_e^{-1}\mmatrix S)\det(W_e^{-1}\mmatrix S)$. Therefore the eigenvalues of $\hat{\mmatrix Q}$ are the eigenvalues of $W_e^{-1}\mmatrix S$, which by Thm. \ref{thm:EVsOfSingleDomain} are purely imaginary.
\end{proof}

%

Unfortunately, the scalar problem is a special case. For systems, the eigenvalues are not so well-behaved if one does not diagonalize the system first and add dissipation through upwinding \cite{SHARAN2018199}. They will generally have both positive and negative real parts.\\

{\bf Case 4. Overset Grid for the System.}  The final example is the case of the system, \eqref{eq:1DWaveEquationSystem}, for one element each per subdomain.

 For the system, and with the central numerical flux, the DGSEM approximations for the base and overset grids are
\begin{equation}
\begin{gathered}
w_j\Delta x_u\dot{\statevec U}_j + \oneHalf\delta_{jN}\mmatrix A\statevec V(c) + \oneHalf \mmatrix A\left(\mmatrix S\vec{\statevec U} \right)_j = 0\\
w_j\Delta x_v\dot{\statevec V}_j - \oneHalf\delta_{j0}\mmatrix A\statevec U(b) + \oneHalf \mmatrix A\left(\mmatrix S\vec{\statevec V} \right)_j = 0,
\end{gathered}
\end{equation}
where $\mmatrix S$ is now a block matrix with blocks whose size is the rank of $\mmatrix A$. As before, the donor values of the solutions are interpolated from the donor subdomains, so
\begin{equation}
\begin{gathered}
\statevec V(c) = \sum_{n=0}^N\ell_n(c)\statevec V_n \\
\statevec U(b) = \sum_{n=0}^N\ell_n(b)\statevec U_n.
\end{gathered}
\label{eq:OverlapPointsSystem}
\end{equation}
Then in matrix-vector form,
\begin{equation}
\hat{\mmatrix W} \frac{d}{dt}\left[\begin{array}{c}\vec {\statevec U} \\\vec {\statevec V}\end{array}\right] = \oneHalf \left[\begin{array}{cc}\mmatrix S & \mmatrix O \\\mmatrix B & \mmatrix S\end{array}\right]\left[\begin{array}{c}\vec {\statevec U} \\\vec {\statevec V}\end{array}\right] + \vec {\statevec g} = \oneHalf\mmatrix Q\left[\begin{array}{c}\vec {\statevec U} \\\vec {\statevec V}\end{array}\right] + \vec {\statevec g}
\end{equation}
where
\begin{equation}
\mmatrix B = \left[\begin{array}{ccc}\ell_0(b)\mmatrix A & \dots & \ell_N(b)\mmatrix A\\\mmatrix 0 & \dots & \mmatrix 0 \\\vdots & \ddots & \mmatrix 0 \\\mmatrix 0 & \dots & \mmatrix 0\end{array}\right],\quad  \mmatrix O= -\left[\begin{array}{ccc}\mmatrix 0 & \dots & \mmatrix 0 \\\vdots & \ddots & \mmatrix 0 \\\mmatrix 0 & \dots & \mmatrix 0 \\\ell_0(c)\mmatrix A & \dots & \ell_N(c)\mmatrix A\\\end{array}\right].
\end{equation}
The matrix $\hat{\mmatrix W}$ includes the element sizes, which remains diagonal and positive definite, so that it does not change the eigenvalues. The matrices $\mmatrix S$, $\mmatrix B$ and $\mmatrix O$ are each block matrices whose blocks have the rank of the system coefficient matrix, $\mmatrix A$.

For the system of equations, where there is now two-way  coupling between the domains, the matrix $\mmatrix Q$ is no longer block triangular, nor is it skew-symmetric. It is now indefinite so eigenvalues are no longer guaranteed to be on the imaginary axis, and can have positive real parts. As examples, Fig.~\ref{fig:CentChimeraSystemEvs} shows the $N=5$ eigenvalues for a base domain of $\Omega_u = [0,2]$ with three overset domains, showing that the eigenvalues have no set pattern, and in two of the cases have unstable ones.
\begin{figure}[htbp] 
   \centering
   \includegraphics[width=1.85in]{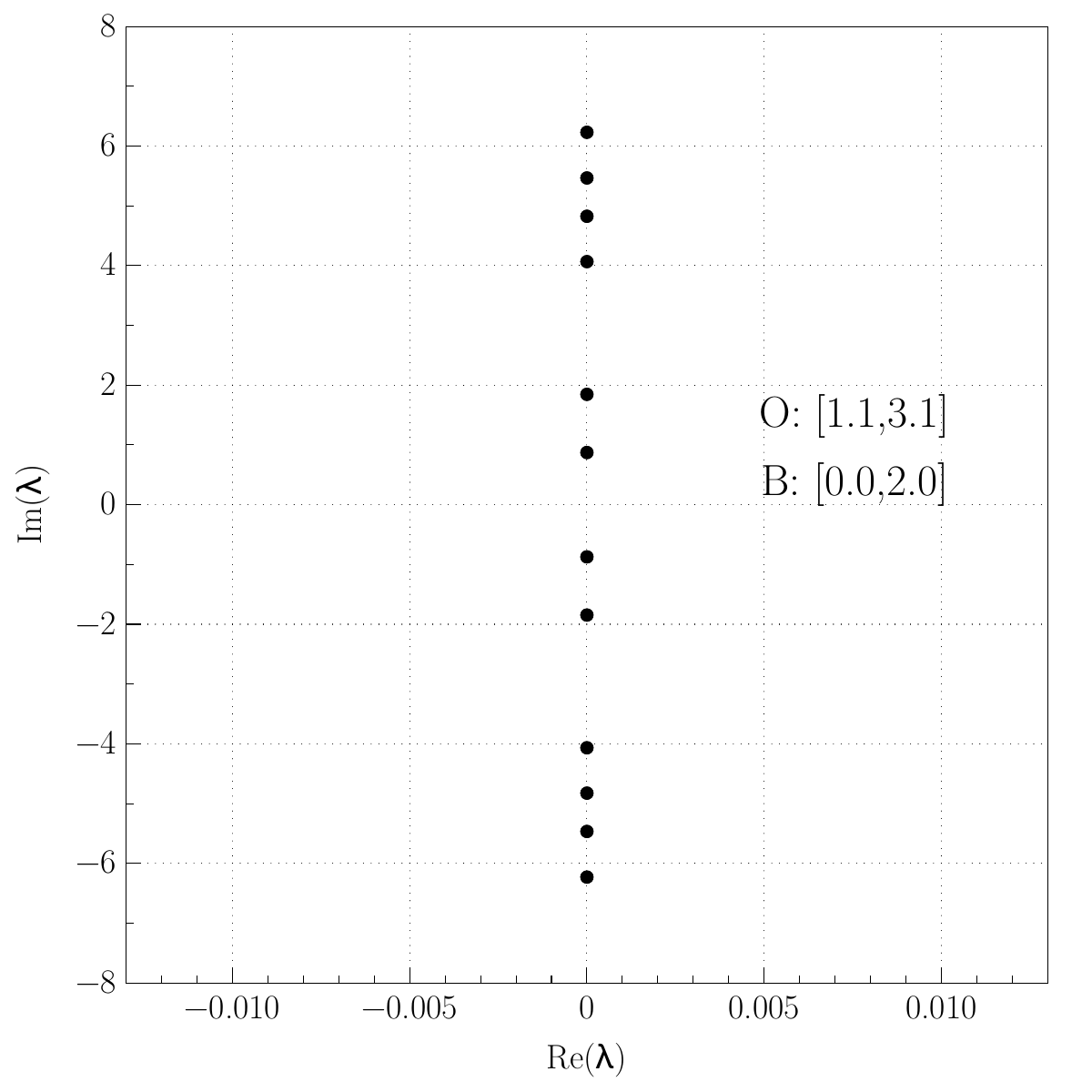}
   \includegraphics[width=1.85in]{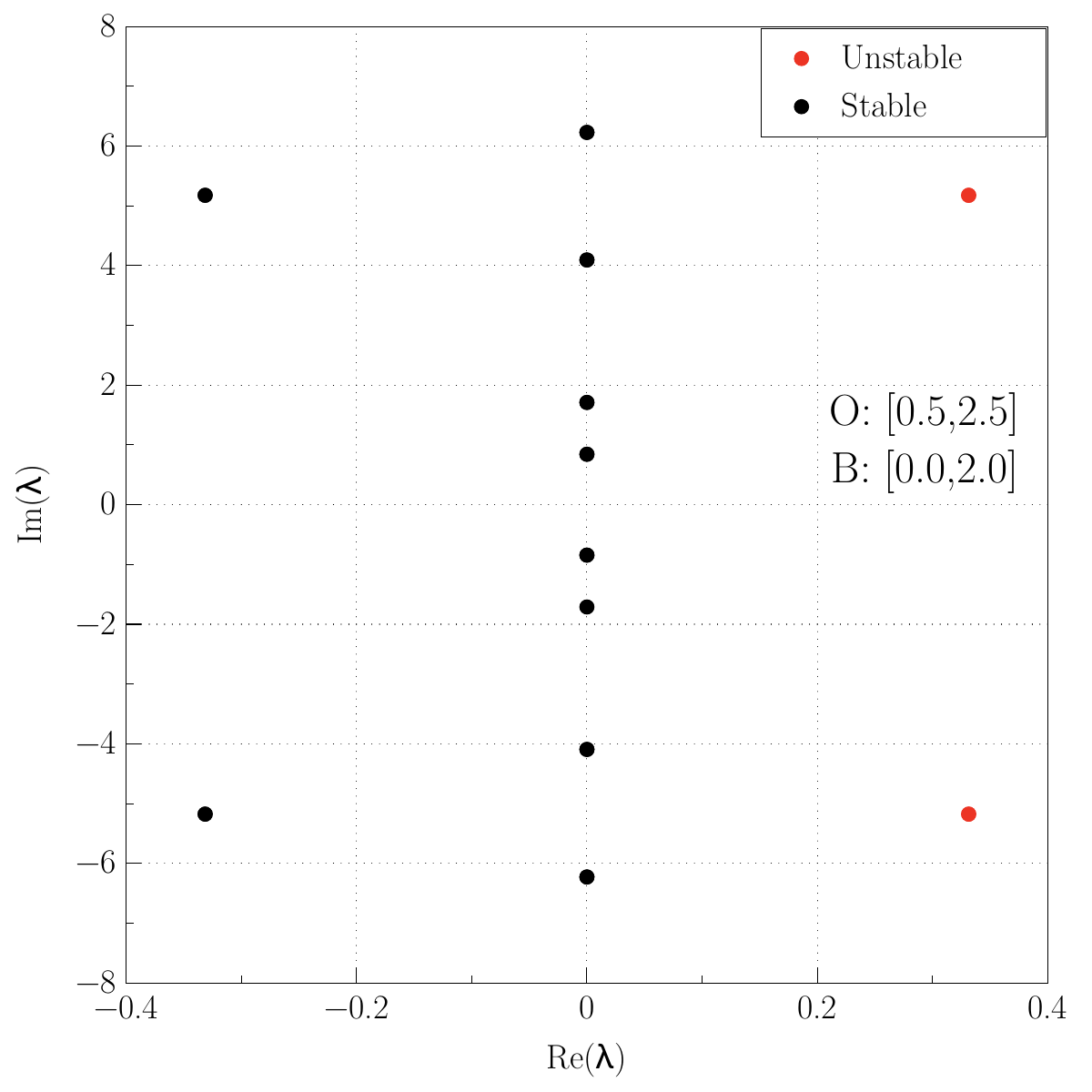}
   \includegraphics[width=1.85in]{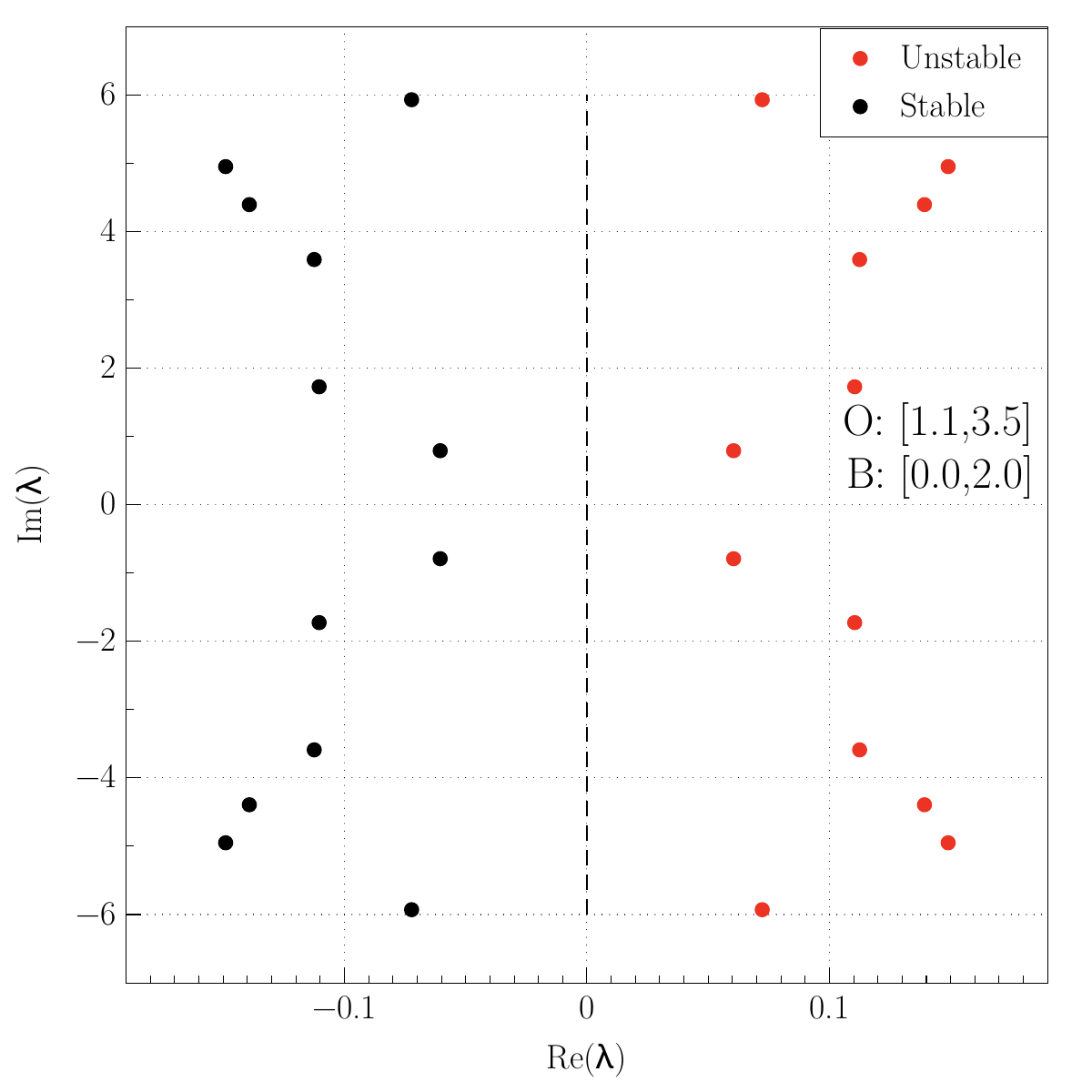}
   \caption{Eigenvalues of the DGSEM approximation of the system \eqref{eq:1DWaveEquationSystem} for the overset problem with one element per subdomain, the central numerical flux, and $N=5$, showing that the approximation can have unstable eigenvalues depending on the size and location of the overset domain relative to the base domain. }
   \label{fig:CentChimeraSystemEvs}
\end{figure}

The fact that the matrix $\mmatrix Q$ is indefinite means that the coupling between the grids is inherently unstable, and therefore it is necessary to add some kind of stabilization. For the DGSEM, dissipation is typically added by using a dissipative numerical flux. For the scalar problem, if, instead of using the central numerical flux, we use the upwind one, $U^*(U^L,U^R) = U^L$, the approximation at a point on a single domain \eqref{eq:PtwiseScalarU}
becomes
\begin{equation}
w_j\dot U_j +\delta_{jN}\oneHalf U_N - \delta_{j0}\left(g_L - \oneHalf U_0   \right) + \oneHalf\left\{ \sum_{n=0}^N U_n\left(w_jD_{jn} - w_nD_{nj} \right)\right\}=0.
\label{eq:BaseWithDiss}
\end{equation}
The matrix system therefore becomes
\begin{equation}
\mmatrix W_e\frac{d}{dt} \vec U =  \oneHalf\mmatrix Q\vec U + \oneHalf \vec g,
\label{eq:SDMatrixEquationWDiss}
\end{equation}
where
\begin{equation}
\mmatrix Q = \mmatrix S - \left[\begin{array}{ccccc}1 &  &  &  &  \\ & 0 &  &  &  \\ &  & \ddots &  &  \\ &  &  & 0 &  \\ &  &  &  & 1\end{array}\right] \equiv \mmatrix S - \mmatrix R
\end{equation}
The matrix $\mmatrix S$ is skew-symmetric, whereas the matrix $\mmatrix R$ is positive semi-definite. The matrix $\mmatrix Q$ is therefore negative semi-definite, noting that for any $\vec x \ne 0$,
\begin{equation}
\vec x^T \mmatrix Q\vec x = \vec x^T \mmatrix S\vec x - \vec x^T \mmatrix R\vec x= - \vec x^T \mmatrix R\vec x = -(x_0^2 + x_N^2)\le 0,
\label{eq:NegSemiDefinite}
\end{equation}
and so its eigenvalues are always stable.

When we move to the overset grid problem for the scalar equation,
the base domain equation stays the same, while the overset grid solution satisfies
\begin{equation}
w_j\dot V_j +\delta_{jN}\oneHalf V_N - \delta_{j0}\left(U(b) - \oneHalf V_0   \right) + \oneHalf\left\{ \sum_{n=0}^N U_n\left(w_jD_{jn} - w_nD_{nj} \right)\right\}=0.
\end{equation}
Then the system of equations in matrix form is
\begin{equation}
\mmatrix W \frac{d}{dt}\left[\begin{array}{c}\vec U \\\vec V\end{array}\right] = \oneHalf \left[\begin{array}{cc}\mmatrix S - \mmatrix R & \mmatrix 0 \\\mmatrix B & \mmatrix S-\mmatrix R\end{array}\right]\left[\begin{array}{c}\vec U \\\vec V\end{array}\right] + \vec g = \oneHalf\mmatrix Q\left[\begin{array}{c}\vec U \\\vec V\end{array}\right] + \vec g,
\end{equation}
where
\begin{equation}
\mmatrix B =2 \left[\begin{array}{ccc}\ell_0(b) & \dots & \ell_N(b) \\0 & \dots & 0 \\\vdots & \ddots & 0 \\0 & \dots & 0\end{array}\right]
\end{equation}
is now the matrix that couples the base grid to the overset grid. Again, the matrix is block lower triangular, so the eigenvalues of $\mmatrix Q$ are those of $\mmatrix S - \mmatrix R \le 0$, and are therefore stable.

The final question is whether or not it is enough to use the upwind numerical flux for the system to guarantee stable eigenvalues.  The answer relies, critically, on the fact that for the constant coefficient problem in one space dimension, the system can be diagonalized to decouple left and right-going waves. The ability to diagonalize the system was also critical in showing that the system problem with characteristic boundary conditions is well-posed \cite{KOPRIVA2022110732}.

For the system we have
\begin{equation}
\begin{split}
w_j\dot{\statevec U}_j &+ \delta_{jN}\left(\statevec F^*(\statevec U_N,\statevec g_R) - \oneHalf \statevec F_N   \right) - \delta_{j0}\left(\statevec F^*(\statevec g_L,\statevec U_0) - \oneHalf \statevec F_0   \right)
\\&+ \oneHalf\left\{ \sum_{n=0}^N \mmatrix A\statevec U_n\left(w_jD_{jn} - w_nD_{nj} \right)\right\}=0,
\end{split}
\end{equation}
where
$\statevec F = \mmatrix A\statevec U = \mmatrix A^+\statevec U + \mmatrix A^-\statevec U$ and $\statevec F^*(\statevec U^L, \statevec U^R) = \mmatrix A^+\statevec U^L  + \mmatrix A^-\statevec U^R$.
Then for $\statevec U$,
\begin{equation}
\begin{split}
w_j\dot{\statevec U}_j &+\delta_{jN}\left(\oneHalf(\mmatrix A^+ +|\mmatrix A^-| )\statevec U_N  + \mmatrix A^-\statevec g_r \right)
+ \delta_{j0}\left(\oneHalf(\mmatrix A^+ +|\mmatrix A^-|)\statevec U_0   \right)
\\&+ \oneHalf\left\{ \sum_{n=0}^N \mmatrix A\statevec U_n\left(w_jD_{jn} - w_nD_{nj} \right)\right\}=0.
\end{split}
\end{equation}
For $\statevec V$,
\begin{equation}
\begin{split}
w_j\dot{\statevec V}_j &+\delta_{jN}\left(\oneHalf(\mmatrix A^+ +|\mmatrix A^-|)\statevec V_N    \right)
- \delta_{j0}\left(\mmatrix A^+\statevec g_L -\oneHalf(|\mmatrix A^-| + \mmatrix A^+)\statevec V_0  \right)
\\&+ \oneHalf\left\{ \sum_{n=0}^N \mmatrix A\statevec V_n\left(w_jD_{jn} - w_nD_{nj} \right)\right\}=0.
\end{split}
\end{equation}
Note that $\mmatrix A^+ +|\mmatrix A^-|  = |\mmatrix A|$.

For the overlap interface points, $\statevec g_R = \statevec V(c)$, $\statevec g_L = \statevec U(b)$ given by \eqref{eq:OverlapPointsSystem}.
Then the system of ODEs is
\begin{equation}
\hat{\mmatrix W} \frac{d}{dt}\left[\begin{array}{c}\vec {\statevec U} \\\vec {\statevec V}\end{array}\right]
= \oneHalf \left[\begin{array}{cc}\mmatrix S - \mmatrix R & \mmatrix O \\\mmatrix B & \mmatrix S - \mmatrix R\end{array}\right]\left[\begin{array}{c}\vec {\statevec U} \\\vec {\statevec V}\end{array}\right] = \oneHalf\mmatrix Q\left[\begin{array}{c}\vec {\statevec U} \\\vec {\statevec V}\end{array}\right]
\end{equation}
where, now,
\begin{equation}
\mmatrix B = 2\left[\begin{array}{ccc}\ell_0(b)\mmatrix A^+ & \dots & \ell_N(b)\mmatrix A^+\\\mmatrix 0 & \dots & \mmatrix 0 \\\vdots & \ddots & \mmatrix 0 \\\mmatrix 0 & \dots & \mmatrix 0\end{array}\right],\quad  \mmatrix O
=
2\left[\begin{array}{ccc}\mmatrix 0 & \dots & \mmatrix 0 \\\vdots & \ddots & \mmatrix 0 \\\mmatrix 0 & \dots & \mmatrix 0 \\\ell_0(c)|\mmatrix A^-| & \dots & \ell_N(c)|\mmatrix A^-|\\\end{array}\right],
\end{equation}
and
\begin{equation}
\mmatrix R = \left[\begin{array}{ccccc}(|\mmatrix A^-| + \mmatrix A^+) &  &  &  &  \\ & 0 &  &  &  \\ &  & \ddots &  &  \\ &  &  & 0 &  \\ &  &  &  & (|\mmatrix A^-| + \mmatrix A^+)\end{array}\right] \ge 0.
\end{equation}

Let
\begin{equation}
\mmatrix {$\mathcal P$} = \mathrm{diag}(\mmatrix P),
\end{equation}
where $\mmatrix P$ is the matrix that diagonalizes $\mmatrix A$, i.e. $\mmatrix P^T\mmatrix A\, \mmatrix P = \mmatrix {$\Lambda$}$, since $\mmatrix A$ is symmetric. We assume that $\mmatrix P$ is arranged so that the matrix of eigenvalues is partitioned as
\begin{equation}
\mmatrix {$\Lambda$} = \mmatrix {$\Lambda$}^+ + \mmatrix {$\Lambda$}^- = \left(\begin{array}{cc}\mmatrix {$\Lambda$}^+_s & \mmatrix 0 \\\mmatrix 0 & \mmatrix 0\end{array}\right) +  \left(\begin{array}{cc}\mmatrix 0 & \mmatrix 0 \\\mmatrix 0 & \mmatrix {$\Lambda$}^-_s\end{array}\right),
\end{equation}
where $\mmatrix {$\Lambda$}^+_s$ is the subset that contains the positive eigenvalues and $\mmatrix {$\Lambda$}^-_s$ contains the negative ones. The transformation also partitions the
solution vector into left and right going characteristic variables,
\begin{equation}
\mmatrix P^T \statevec U = \left(\begin{array}{c}\statevec U^+ \\\statevec U^-\end{array}\right).
\end{equation}
Then the matrix in which the blocks become diagonal within their blocks,
\begin{equation}
{\mmatrix Q}_d \equiv\mmatrix {$\mathcal P$}^T\mmatrix Q\, \mmatrix {$\mathcal P$}
=  \left[\begin{array}{cc}{\mmatrix S}_d - {\mmatrix R}_d & {\mmatrix O}_d \\ {\mmatrix B}_d &  {\mmatrix S}_d-{\mmatrix R}_d\end{array}\right],
\end{equation}
is similar to  $\mmatrix Q$ and therefore has the same eigenvalues. The matrix $ {\mmatrix S}_d$ remains skew-symmetric, while the other matrices become
\begin{equation}
 {\mmatrix B}_d = 2\left[\begin{array}{ccc}\ell_0(b)\mmatrix {$\Lambda$}^+ & \dots & \ell_N(b)\mmatrix {$\Lambda$}^+\\\mmatrix 0 & \dots & \mmatrix 0 \\\vdots & \ddots & \mmatrix 0 \\\mmatrix 0 & \dots & \mmatrix 0\end{array}\right],\quad  {\mmatrix O}_d
=
2\left[\begin{array}{ccc}\mmatrix 0 & \dots & \mmatrix 0 \\\vdots & \ddots & \mmatrix 0 \\\mmatrix 0 & \dots & \mmatrix 0 \\\ell_0(c)|\mmatrix {$\Lambda$}^-| & \dots & \ell_N(c)|\mmatrix {$\Lambda$}^-|\\\end{array}\right],
\end{equation}
and
\begin{equation}
{\mmatrix R}_d = \left[\begin{array}{ccccc}|\mmatrix {$\Lambda$}|  &  &  &  &  \\ & 0 &  &  &  \\ &  & \ddots &  &  \\ &  &  & 0 &  \\ &  &  &  & |\mmatrix {$\Lambda$}|\end{array}\right] \ge 0.
\end{equation}

Since the characteristic polynomial is invariant under row swaps, we can re-order the equations without changing the eigenvalues. Let $\mathbb P$ be a permutation matrix that re-orders the equations to separate the positive and negative eigenvalues of $\mmatrix A$, with the state vector $\mathbb P\mmatrix P^T \statevec U$ now of the form
\begin{equation}
\left(\begin{array}{c}\vec{\statevec U}^+ \\ \vec{\statevec {V}}^+ \\\vec{\statevec {V}}^- \\\vec{\statevec {U}}^-\end{array}\right).
\end{equation}
Then the re-ordered matrix can be written as
\begin{equation}
{\mmatrix Q} _r = \mathbb P\mmatrix Q_d= \left(\begin{array}{cccc}{\mmatrix S}^+-{\mmatrix R}^+& 0 & 0 & 0 \\{\mmatrix B}^+ & {\mmatrix S}^+-{\mmatrix R}^+ & 0 & 0 \\0 & 0 & {\mmatrix S}^--{\mmatrix R}^- & 0 \\0 & 0 & {\mmatrix O}^- & {\mmatrix S}^--{\mmatrix R}^-\end{array}\right)
\end{equation}
where the $\pm$ denotes the submatrices associated with the positve and negative eigenvalues.
The matrix ${\mmatrix Q} _r$ is also lower block triangular (and completely decouples the left- and right-going characteristic variables), and therefore has the eigenvalues of the diagonal blocks, which are each negative semi-definite since $\mmatrix S^\pm$ is skew-symmetric and $\mmatrix R^\pm$ is negative semi-definite, per \eqref{eq:NegSemiDefinite}. Therefore $\mmatrix Q\le 0$, and its eigenvalues are stable.

Unlike when the dissipation-free approximation is used, the eigenvalues of the DGSEM system are stable with the upwind numerical flux. However, this stability relies critically on the ability to diagonalize and sort the system to decouple left- and right-going waves. In general this is only possible in one space dimension, so the result does not extend to multiple space dimensions.


\bibliographystyle{elsarticle-num-names}
\bibliography{ChimeraStability.bib}

\end{document}